\title{Comparison properties of the Cuntz semigroup and applications to C*-algebras}
\author{Joan Bosa}
\address{School of Mathematics and Statistics, University of Glasgow, 15 University Gardens, G12 8QW, Glasgow, UK}
\email{joan.bosa@glasgow.ac.uk}
\author{Henning Petzka}
\address{Mathematisches Institut der WWU Münster, Einsteinstrasse 62, 48149 Münster, Deutschland}
\email{petzka@uni-muenster.de}
\thanks{The first author was partially supported by the grants DGI MICIIN MTM2011-28992-C02-01, MINECO MTM2014-53644-P, the Australian Research Council grant DP150101598 and the Beatriu de Pinos fellow programme (BP-A 00123). The second author was supported by the DFG (SFB 878) }
\theoremstyle{plain}
\newtheorem{lemma}{Lemma}[section]
\newtheorem{theorem}[lemma]{Theorem}
\newtheorem{proposition}[lemma]{Proposition}
\newtheorem{definition}[lemma]{Definition}
\newtheorem*{proposition*}{Proposition}
\newtheorem*{definition*}{Definition}
\newtheorem*{theorem*}{Theorem}
\newtheorem{remark}[lemma]{Remark}
\newtheorem{example}[lemma]{Example}
\newtheorem{question}[lemma]{Question}
\newcommand{\Cs}{{C*-al\-ge\-bra}}
\newcommand{\Cu}{\mathrm{Cu}}
\begin{document}

\begin{abstract}
We study comparison properties in the category $\Cu$ aiming to lift results to the C*-algebraic setting. We introduce a new comparison property and relate it to both the CFP and $\omega$-comparison. We show differences of all properties by providing examples, which suggest that the corona factorization property for C*-algebras might allow for both finite and infinite projections.  In addition, we show that R\o rdam's simple, nuclear C*-algebra with a finite and an infinite projection does not have the CFP.
 \end{abstract}

\maketitle

\section*{Introduction}
Over the last 25 years, the classification of simple, nuclear C*-algebras has inspired a great wealth of research. Recently, a classification has been carried out in two groundbreaking articles \cite{EGLN16,TWW15} for simple C*-algebras of finite nuclear dimension. Nuclear dimension plays the role of a non-commutative covering dimension for nuclear C*-algebras. Requesting this dimension to be finite is one of the strong regularity conditions occurring in the Toms-Winter conjecture, which predicts that three regularity conditions, each with a different flavour, are in fact equivalent. Counterexamples to the conjecture stating that the same classifying invariant, which works in the case of finite nuclear dimension (the so-called Elliott invariant), should work in the general case, appeared in 2003 due to R\o rdam \cite{R03} and in 2008 due to Toms \cite{T08}. The latter exhibited two non-isomorphic AH-algebras that agreed not only on the Elliott invariant, but also on a whole swathe of topological invariants. However, Toms' examples can be distinguished using the Cuntz semigroup $\Cu(\_)$. In this paper, we focus on studying some comparison properties - such as the corona factorization property and weak comparison conditions for the Cuntz semigroup- to capture the structure of some simple C*-algebras.

There are a number of regularity properties dividing those C*-algebras not handled by the classification theorem from \cite{TWW15} into classes of `regular' and `irregular' C*-algebras.  One of them is the corona factorization property (short, CFP), which is a mild regularity property introduced in \cite{EK01} in order to understand the theory of extensions and in particular of when extensions are absorbing \cite{KN06, NgCFP}. Zhang proved that, under the additional assumption of the CFP, there is no simple C*-algebra of real rank zero containing both a finite and an infinite projection. (The same follows from the methods developed in \cite{OPR1}.) In addition to its analytical definition, the CFP has been characterized in \cite{OPR2}, for $\sigma$-unital C*-algebras, as a certain comparison property of the Cuntz semigroup, also called the CFP (for semigroups). Another related comparison property is the $\omega$-comparison, a generalization of the almost unperforation property, which holds in the case of well-behaved C*-algebras (in the sense of the above mentioned classification theorem)(\cite{Robert11}).

The study of comparison and divisibility properties for the Cuntz semigroup was initiated in \cite{BRTTW,OPR1,OPR2, RobertRordam}, where the preceding properties play important roles. In particular, in \cite[Proposition 2.17]{OPR1} it is shown that $\omega$-comparison implies the CFP for Cuntz semigroups; and the converse is left open. In Example \ref{Ex:CFP-noW}, we answer this question negatively providing an abstract Cuntz semigroup that satisfies the CFP, but not $\omega$-comparison. Our abstract semigroup lies in the category $\Cu$ (as defined in \cite{CEI08} and extended by additional axioms from \cite{RorWin10} and \cite{Robert13}) to which the Cuntz semigroup of a C*-algebra naturally belongs. 


All of the comparison properties mentioned above have in common (suitably stated) that they are characterized by those elements in the Cuntz semigroup of a unital C*-algebra, that cannot be represented by the Cuntz equivalence class of a positive element in some matrix algebra over the given algebra, but only appear as the equivalence class of a positive element in the stabilization. If there is a largest element in the Cuntz semigroup (which does exist in the simple case), then we are more precisely concerned about properties of this largest element. In particular, we focus on questions such as : {\it if a multiple of some element $x$ in a Cuntz semigroup equals the largest element, is $x$ itself already the largest element?} Or, {\it if all functionals on the Cuntz semigroup are infinite on an element $x$, must $x$ be the largest element?} Or thirdly, {\it if the sum of two elements $x$ and $y$ equals to the largest element, and $y$ is small in a suitable sense, must $x$ already be equal to this largest element?} 

To this end, we introduce a new comparison property involving the largest element in a $\Cu$- semigroup, which we call $\beta$-comparison, using the concept of an order unit norm as defined in \cite{Good.} as our motivation. We set this new property in relation to the other comparison properties, and we highlight differences with the help of examples.  

In the C*-algebra framework, there are multiple implications which can be deduced from our study of comparison properties. Firstly, Example \ref{Ex:stablyNoFunctionals} suggests that the expected dichotomy (of being either stably finite or purely infinite) in the simple real rank zero case might require an analytical approach: The given examples of abstract $\Cu$-semigroups answer the corresponding question negatively in the algebraic setting, but we don't know whether these examples can be realized as the Cuntz semigroup of a C*-algebra. Secondly, we relate the comparison properties studied in this paper for those $\Cu$-semigroups coming from a C*-algebra. In this setting, we show that for simple C*-algebras $\beta$-comparison and $\omega$-comparison are equivalent properties and that the so-called elementary $\Cu$-semigroups as in \cite{APT14} can not arise as the Cuntz semigroup of a C*-algebra. Finally, in Theorem \ref{RordamsAlgebraNoCFP} we conclude that the only known example of a simple, nuclear C*-algebra with both finite and infinite projections constructed in \cite{R03}, does not have the CFP. 

The outline of the paper is as follows. After fixing notation and recalling some basic facts on Cuntz semigroups in Section \ref{Sec:Preliminaries}, we explore the difference between states and functionals on $\Cu$-semigroups in Section \ref{SectionFunctionals}. The results are used subsequently in the definition of the value $\beta(x,y)$ in Section \ref{SectionBeta} and we explore some of its characteristics. Section \ref{SecCompProp} is focused on comparison properties. We recall all properties relevant for this paper and define both the $\beta$-comparison property, associated to the value $\beta(x,y)$, and cancellation of small elements at infinity. We further show some relations between all described comparison properties and give examples. In Section \ref{SectionApplications}, we apply and expand the results obtained in the algebraic framework to the framework of C*-algebras.

\section{Notation and Preliminaries}\label{Sec:Preliminaries}
\subsection{Partially ordered Abelian Semigroups}
Throughout, $(W,\leq)$ will denote a partially ordered abelian monoid, i.e., a partially ordered abelian semigroup with neutral element $0$. We shall exclusively be interested in positively ordered semigroups, i.e., semigroups where $0\leq x$ for all $x\in W$. In particular, $\leq$ will extend the algebraic order, that is, if $x+z=y$, then $x\leq y$.

In the following we want to remind  the reader of some commonly used terminology.
An \textbf{order unit} in $W$ is a non-zero element $u$ such that, for all $x\in W$, there is an $n\in\mathbb{N}$ such that $x\leq nu$. 
We define the i\textbf{deal generated by an element} $y$ as $$W_{y}:=\{x\in W \mid \text{ there exists } n\in \mathbb{N}\text{ such that } x\leq ny\}.$$
Given two elements $x,y\in W$, one writes \textbf{$x\propto y$} if $x$ satisfies $x\leq ny$ for some $n\in\mathbb{N}$.

 Given an increasing sequence $(y_n)$ in $W$, an element $y$ is a \textbf{supremum} of $(y_n)$ when it is a least upper bound. When they exist, suprema of increasing sequences are unique, and we will denote them by $\sup (y_n)$. We say that an ordered abelian semigroup $(W,\leq)$ is \textbf{complete} if all the increasing sequences have suprema in $W$.  

 One writes \textbf{$x\ll y$} if, whenever $\{x_{n}\}$ is an increasing sequence for which the supremum exists and satisfies 
$y\leq \sup\,x_{n}$, then $x\leq x_{n}$ for some $n$. An element $x$ is called \textbf{compact}, if $x\ll x$. We write \textbf{$y<_{s}x$} if there exists $k\in \mathbb{N}$ such that $(k+1)y\leq kx$. 

Finally, an element $x$ in $W$ is said to be \textbf{full} if for any $y',y\in W$ with $y'\ll y$, one has $y'\propto x$, denoted by $y\,\bar{\propto}\, x$.
A sequence $\{x_{n}\}$ in $W$ is said to be \textbf{full} if it is increasing and for any $y',y\in W$ with $y'\ll y$, one has $y'\propto x_{n}$ 
for some (hence all sufficiently large) n.
Notice that if $x\in W$ is an order unit, it is also a full element, but the reverse is not true.

We say that $W$ is \textbf{simple} if $x\,\bar{\propto}\, y$ for all nonzero $x,y\in W$. In other words, every nonzero element in a simple semigroup is full.


\subsection{$\Cu$-semigroups}
Given a partially ordered abelian monoid $S$, the following axioms were introduced in \cite{CEI08} in order to define a category $\Cu$ of semigroups containing Cuntz semigroup $\Cu(A)$ of any \Cs\,.
\begin{enumerate}[(O1)]
\item Every increasing sequence $(a_n)_{n\in\mathbb{N}}$ in $S$ has a supremum in $S$.
\item Every element $a\in S$ is the supremum of a sequence $(a_n)_n$ such that $a_n\ll a_{n+1}$ for all $n$.
\item If $a,a',b,b'\in S$ satisfy $a'\ll a$ and $b'\ll b$, then $a'+b'\ll a+b$.
\item If $(a_n)_n$ and $(b_n)_n$ are increasing sequences in $S$, then $\sup_n(a_n+b_n)=\sup_n(a_n)+\sup_n(b_n)$. 
\end{enumerate} 
A sequence as in {\rm (O2)} is called \textbf{rapidly increasing}. Moreover, note that, for semigroups in $\Cu$, the order satisfies $x\leq y$ if and only if $x'\leq y$ for all $x'\ll x$.

\begin{definition}
A $\Cu$-semigroup is a partially ordered monoid that satisfies axioms {\rm (O1)-(O4)} from the above paragraph. That is, $S$ is a $\Cu$-semigroup precisely when $S$ lies in the category $\Cu$.
\end{definition}

If $A$ denotes a C*-algebra $A$, then its Cuntz semigroup is the ordered semigroup of Cuntz-equivalence classes of positive elements in the stabilization of $A$, with the direct sum as addition and the order is given by Cuntz-subequivalence.  We refer the reader to the overview article \cite{APT} for the definition of the Cuntz relation and information on the Cuntz semigroup of a C*-algebra. If $A$ denotes a C*-algebra, then its Cuntz semigroup $\Cu(A)$ is a $\Cu$-semigroup (\cite{CEI08}). If $a$ is a positive element in a the stabilization of a C*-algebra $A$, then we denote by $\langle a \rangle$ its equivalence class in $\Cu(A)$. We will also consider the (original) Cuntz semigroup $W(A)$ of equivalence classes of positive elements in matrix algebras over $A$.

There are two additional axioms (O5) and (O6) that have been shown to hold for any $\Cu$-semigroup $S$ coming from a C*-algebra, i.e., for any $S$ such that there is a C*-algebra $A$ with $S=\Cu(A)$ (see \cite{Robert13} for (O5) and \cite{RorWin10} for (O6)). 

\vspace{0.3cm}
\hspace{-0.5cm}{\rm (O5)} (Almost algebraic order) If $x'\ll x \leq y$ in $S$, then there is some $z\in S$ such that $x'+z\leq y\leq x+z$. 

\hspace{-0.5cm}{\rm (O6)} (Almost Riesz decomposition) If $x'\ll x\leq y_1+y_2$ in $S$, then there are elements $x_1\leq x,y_1$ and $x_2\leq x,y_2$ such that $x_1+x_2\geq x'$. 

Therefore, it would be natural to include (O5) and (O6) into the definition of the category $\Cu$ and into the definition of a $\Cu$-semigroup. Since at times we would like to highlight the usage of these additional axioms, we leave the definition of the category $\Cu$ (and $\Cu$-semigroup) as it is and mention explicitily when we assume a given $\Cu$-semigroup to satisfy the additional axioms.

If $S$ is a $\Cu$-semigroup, let us recall that an element $a\in S$ is {\bf finite} if for every element $b\in S$ such that $a+b\leq a$, one has $b=0$. An element is \textbf{infinite} if it is not finite. An infinite element $a\in S$ is {\bf properly infinite} if $2a\leq a$. We say that $S$ is {\bf stably finite} if an element $a\in S$ is finite whenever there exists $\tilde{a}\in S$ with $a\ll \tilde{a}$. In particular, if $S$ contains a largest element, denoted by $\infty$, then the latter condition is equivalent to $a\ll \infty$ (\cite{APT14}). A largest element, $\infty$, always exists whenever $S$ is simple, and it is unique whenever it exists (\cite[Paragraph 5.2.2.]{APT14}). If $S$ is simple, then we say that $S$ is {\bf purely infinite} if $S=\{0,\infty\}$, i.e., if $S$ only contains of the zero element and the largest element.

We finish our preliminary part by recalling that a $\Cu$-semigroup $S$ is said to be {\bf algebraic} if every element $x\in S$ is supremum of a sequence of compact elements, i.e. of elements such that $a\ll a$. A $\Cu$-semigroup $S=\Cu(A)$ coming from a C*-algebra $A$ is algebraic, whenever the underlying C*-algebra $A$ has real rank zero.

\section{States and Functionals on $\Cu$-semigroups}\label{SectionFunctionals}

In this section, we recall the notions of an (extended valued) state and a functional on $\Cu$-semigroups, and we will study their interplay. While functionals (by definition) better preserve the order structure of the $\Cu$-semigroup, there are better results on states to conclude how two given elements order-relate. For additional information on states and functionals we refer the reader to \cite[Section 5.2]{APT14}.

For an ordered semigroup $S$ with distinguished element $y\in S$, by an (extended valued) state we mean an ordered semigroup map $f:S\rightarrow [0,\infty]$ such that $f(y)=1$. The set of states on $S$ normalized at $y$ is denoted by $\mathcal{S}(S,y)$. 

By a functional on a $\Cu$-semigroup $S$ we mean an ordered semigroup map that preserves suprema of increasing sequences (which always exist in a $\Cu$-semigroup), and we denote the set of functionals on $S$ by $F(S)$.

If $S$ is a simple $\Cu$-semigroup, then every (nonzero) functional is faithful, but states don't need to be faithful. To understand this difference better, we consider a helpful subsemigroup. If $S$ is a simple $\Cu$-semigroup, then $S$ contains a largest element $\infty$, and we let $$S_{\ll\infty}:=\{s\in S\ |\ s\ll\infty\}.$$

Note that, whenever $S$ is simple, $x\in S_{\ll\infty}$ and $y\in S$ is nonzero, then there is some $n\in\mathbb{N}$ such that $x\leq n\cdot y$ (because $\infty=\sup_n n\cdot y$). This implies for both states and functionals on $S$ alike that, if a state, or functional, is zero on some nonzero element $x\in S_{\ll \infty}$, then it is zero on all of $S_{\ll \infty}$. If this happens for a functional then, as every $x\in S$ can be written as the supremum of a rapidly increasing sequence (in particular, as the supremum of elements in $S_{\ll\infty}$) and because functionals preserve suprema of increasing sequences, the functional must be zero everywhere. That is, nonzero functionals are faithful on simple $\Cu$-semigroups. But for states there is no such condition on suprema of increasing sequences, so a state may very well be nonzero, but zero on $S_{\ll \infty}$. In fact, faithfulness of functionals on simple $\Cu$-semigroups is a consequence of the more general fact that functionals are uniquely determined (also for nonsimple $S$) on $S_{\ll}=\{s\in S|\ \exists t\in S \mbox{ with } s\ll t\}$ (and states are not), which agrees with $S_{\ll\infty}$ whenever $S$ contains a largest element. 

\begin{remark}
If $A$ is a unital C*-algebra, then consider its (original) Cuntz semigroup $W(A)$ of equivalence classes of positive elements in matrix algebras over $A$. For every $x\in W(A)\subseteq \Cu(A)$ we have that $x\leq \langle N\cdot 1_A\rangle \ll \infty$ for some $N$. Hence $W(A)\subseteq \Cu(A)_{\ll}$. Whether the converse holds, a problem that appeared in the literature under the name of  `hereditariness of the Cuntz semigroup', is an open question in general. Positive answers in quite general settings can be found in, e.g., \cite{ABPP} and \cite{BRTTW}.
\end{remark}

Existence of states is connected to infiniteness of the element at which we would like to normalize. This follows from the following result of \cite{Blac.Ror.} on extensions of states on preordered semigroups, which generalizes the well-known corresponding result on extensions of states on ordered groups by Goodearl and Handelman (\cite{GoodHand}):

\begin{theorem}(\cite[Corollary 2.7]{Blac.Ror.})\label{Blac}
Let $(W,\leq,u)$ be a preordered semigroup with $u$ an order-unit, and let $W_{0}$ be a subsemigroup containing $u$ (equipped with the relative preordering).  Then every
state on $W_{0}$ extends to a state on $W$.
\end{theorem}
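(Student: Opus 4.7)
The plan is a Hahn--Banach-type extension argument. First, apply Zorn's Lemma to the poset $\mathcal{E}$ of pairs $(W', g)$, where $W_0 \subseteq W' \subseteq W$ is a subsemigroup and $g$ is a state on $W'$ extending the given state on $W_0$, ordered by extension. Chains have upper bounds via union, so $\mathcal{E}$ has a maximal element $(W^*, g^*)$. Since $u \in W^*$ is an order unit and $g^*(u)=1$, monotonicity forces $g^*(a) \leq n$ whenever $a \leq nu$, so $g^*$ is finite-valued. The theorem reduces to showing $W^* = W$: assuming there is $x \in W \setminus W^*$, I must extend $g^*$ to the subsemigroup $W^{**}$ generated by $W^*$ and $x$.

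Any such extension is determined by a single value $t := g^{**}(x) \in [0,\infty)$ via $g^{**}(a+nx) := g^*(a)+nt$. Order preservation in $W$ translates, by splitting any preorder relation $a+nx \leq b+mx$ with $a,b \in W^*$ into cases $m>n$ and $n>m$, into bounds on $t$. Define
\[
L := \sup\Big\{\tfrac{g^*(a)-g^*(b)}{k} : a,b \in W^*,\ k\geq 1,\ a + (n+k)x \leq b + nx \text{ for some } n\geq 0\Big\},
\]
\[
U := \inf\Big\{\tfrac{g^*(b)-g^*(a)}{k} : a,b \in W^*,\ k\geq 1,\ a + nx \leq b + (n+k)x \text{ for some } n\geq 0\Big\},
\]
with the conventions that empty sup and inf are $0$ and $\infty$ respectively. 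The order-unit bound $x \leq Nu$ (used with $a=0$, $b=Nu$, $k=1$, $n=0$) shows $U \leq N$, so $[L,U] \subseteq [0,N]$.

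The main step is proving $L \leq U$. From witnesses $a_1+(n_1+k_1)x \leq b_1+n_1 x$ for $L$ and $a_2+n_2 x \leq b_2+(n_2+k_2)x$ for $U$, scaling the first by $k_2$, the second by $k_1$, and summing, one obtains a preorder relation in which the $x$-contributions on both sides coincide. Pushing this cancellation down to a genuine inequality between elements of $W^*$ (in a preordered semigroup $a+c \leq b+c$ does not automatically yield $a\leq b$, so one works modulo the state values, effectively within the Grothendieck group of the submonoid generated by the involved elements), then applying additivity and monotonicity of $g^*$, gives $k_2 g^*(a_1) + k_1 g^*(a_2) \leq k_2 g^*(b_1) + k_1 g^*(b_2)$, which rearranges to $(g^*(a_1)-g^*(b_1))/k_1 \leq (g^*(b_2)-g^*(a_2))/k_2$, as required. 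Any $t \in [L,U]$ then yields a state extending $g^*$ to $W^{**}$, contradicting maximality and forcing $W^*=W$. The step I expect to be most delicate is precisely the cancellation: one must separately verify that relations $a+Mx \leq b+Mx$ with equal multiplicities of $x$ (which are not directly controlled by $L$ or $U$) force $g^*(a)\leq g^*(b)$, using finiteness of $g^*$ and the order-unit structure to reduce them to the constraints already encoded in the two sets above.
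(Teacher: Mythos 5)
The paper offers no proof of this statement; it is quoted directly from Blackadar--R\o rdam, so your sketch has to be measured against the standard proof of that cited result. Your architecture (Zorn's lemma reduction to a one-element extension, then an interval of admissible values for $g^{**}(x)$ cut out by the relations with unequal multiplicities of $x$, with compatibility of the bounds proved by combining two witnesses) is the right one. One cosmetic problem first: your $L$ and $U$ are swapped. A relation $a+(n+k)x\leq b+nx$ forces $kt\leq g^*(b)-g^*(a)$, hence is an \emph{upper} bound on $t$, while $a+nx\leq b+(n+k)x$ forces $t\geq (g^*(a)-g^*(b))/k$, a \emph{lower} bound; indeed your own witness for ``$U\leq N$'', namely $x\leq Nu$, is of the form you assigned to $L$. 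Interchanging the two defining conditions repairs this, and the arithmetic of your main step is unaffected.

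The substantive issue is that the entire difficulty of the semigroup version of the theorem sits in the step you defer: showing that $A+Mx\leq B+Mx$ with $A,B\in W^*$ forces $g^*(A)\leq g^*(B)$. Your proposed device --- ``work modulo the state values, effectively within the Grothendieck group'' --- does not accomplish this. In the Grothendieck group one may formally cancel $Mx$, but the induced relation $[A]\leq[B]$ lives in a preordered group whose positive cone is generated by differences of $W$-comparable elements, and there is no a priori reason that $g^*$ (defined only on $W^*$, and in particular not at $x$) is monotone for that induced preorder; asserting that it is amounts to assuming the extension exists. The step can be closed, but it requires an actual argument, for instance the absorption trick: from $A+Mx\leq B+Mx$ one gets by induction $nA+Mx\leq nB+Mx$ for every $n$ (add $A$, respectively $B$, to both sides and reuse the hypothesis); since the semigroup is positively ordered and $x\leq Nu$, this gives $nA\leq nA+Mx\leq nB+Mx\leq nB+MNu$, which is a relation between elements of $W^*$, whence $ng^*(A)\leq ng^*(B)+MN$; dividing by $n$ and letting $n\to\infty$ --- here the finiteness of $g^*$ supplied by the order unit is essential --- yields $g^*(A)\leq g^*(B)$. (The same trick is needed to see that your interval meets $[0,\infty)$ at all.) As written, the proof is incomplete exactly at the point where the preordered-semigroup case genuinely differs from the Goodearl--Handelman group case.
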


If $u$ is not an order unit, consider $I(u)$, the order ideal generated by $u$. Then a state on a subsemigroup $W_0$ containing $u$ can be extended to a state $f$ in $\mathcal{S}(I(u),u)$. As we consider extended valued states (i.e., we allow states to take the value $\infty$), we can extend $f$ to all of $W$ by setting
$$\bar{f}(x):= \left \{\begin{array}{lll} f(x) &,\ x\in I(u) \\ \infty   &,\ x\notin I(u) \end{array}\right. $$

Hence, the assumption on $u$ being an order unit can be dropped if one considers extended valued states.

Of course, $\mathcal{S}(S,y)=\emptyset$ whenever some multiple of $y$ is properly infinite. If, on the other hand, no multiple of $y$ is properly infinite, then $f(n\cdot y)=n$ is a well-defined state on $\{0,y,2y,3y,\ldots\}\subseteq S$, which extends to a state on $S$. For later reference, we put this observation into a lemma. 

\begin{lemma}\label{NoStates}
Let $S$ be an ordered semigroup and let $y\in S$. Then the set $\mathcal{S}(S,y)$ of states normalized at $y$ is empty if and only if some multiple of $y$ is properly infinite.
\end{lemma}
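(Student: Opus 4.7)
The plan is to establish the two implications separately. The `if' direction is an immediate calculation with the defining inequality $2ky\leq ky$; the `only if' direction proceeds by constructing an explicit state on the cyclic subsemigroup generated by $y$ and then invoking Theorem~\ref{Blac} (with its subsequent remark) to extend that state to all of $S$.

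For the `if' direction, suppose some $ky$ with $k\geq 1$ is properly infinite, and assume for contradiction that $f\in\mathcal{S}(S,y)$. Applying $f$ to $2ky\leq ky$, using additivity, order preservation and $f(y)=1$, yields $2k\leq k$, which is impossible.

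For the converse, assume no multiple of $y$ is properly infinite; in particular $y\neq 0$, since $f(0)=0\neq 1$ already forces the statement to be interpreted for nonzero $y$. Set $W_{0}:=\{ny\mid n\in\mathbb{N}_{0}\}\subseteq S$ and attempt to define $f\colon W_{0}\to[0,\infty]$ by $f(ny):=n$. The crucial step is well-definedness (which automatically entails order-preservation and additivity), i.e.\ showing that $my\leq ny$ forces $m\leq n$. Suppose not, and write $m=n+r$ with $r\geq 1$. Since the order extends the algebraic order, $ny\leq ny+ry=my\leq ny$, so $ny+ry=ny$. Iterating this equality gives $ny=ny+kry$ for every $k\geq 0$; taking $k=n$ yields $ny=n(1+r)y$, and a further appeal to the algebraic order gives $2ny\leq n(1+r)y=ny$. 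Because $y\neq 0$ together with $y\leq ny$ forces $ny\neq 0$, the element $ny$ is nonzero and satisfies $2(ny)\leq ny$, hence is properly infinite, contradicting our assumption. Therefore $f$ is a well-defined state on $W_{0}$ normalized at $y$.

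It remains to extend $f$ from $W_{0}$ to $S$. If $y$ is an order unit, Theorem~\ref{Blac} applies directly. In general, as pointed out in the remark following Theorem~\ref{Blac}, one first extends $f$ to a state on the order ideal $I(y)$ (where $y$ is by construction an order unit) and then sets $\bar f(x):=\infty$ for $x\notin I(y)$, producing the desired $\bar f\in\mathcal{S}(S,y)$. The main obstacle in this argument is the well-definedness step, where one must promote an unexpected comparison $my\leq ny$ with $m>n$ into a proper-infiniteness relation $2(ny)\leq ny$; this is precisely what the short iteration based on the forced equality $ny+ry=ny$ accomplishes.
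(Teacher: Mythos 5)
Your proof is correct and follows essentially the same route as the paper, which records this lemma as the observation in the paragraph immediately preceding it: a state applied to $2ky\leq ky$ forces $2k\leq k$, and conversely $f(ny):=n$ is a well-defined state on the cyclic subsemigroup $\{0,y,2y,\ldots\}$ which extends to $S$ via Theorem~\ref{Blac} together with the remark on non-order-units. Your explicit verification that $my\leq ny$ with $m>n$ produces a properly infinite multiple is a detail the paper leaves implicit, and it is sound (only the degenerate case $n=0$ falls outside your iteration, but there $my\leq 0$ forces $y=0$, which you have already excluded).
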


For functionals there is no such characterization. Obviously, if $y$ is properly infinite, then $\lambda(y)=\infty$ for all functionals $\lambda$. But it is possible that in a (simple) $\Cu$-semigroup $S$ (with $S=\Cu(A)$ for some C*-algebra $A$) there is some $y\in S$ such that $y$ is infinite on all functionals, while no multiple of $y$ is infinite (see Example \ref{ExampleBetaVsQQ}). Further, we have the following observation:

\begin{lemma}\label{FaithfulStates}
Let $S$ be a simple $\Cu$-semigroup and let $y\in S$. Then $\lambda(y)=\infty$ for all functionals $\lambda\in F(S)$ if and only if there is no faithful state in $\mathcal{S}(S,y)$.
\end{lemma}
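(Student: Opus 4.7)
The plan is to prove each implication by contraposition, with the heart of the argument being a regularization of a state to a functional via its lower-semicontinuous envelope. For the $(\Leftarrow)$ direction, I assume there is a nonzero $\lambda\in F(S)$ with $\lambda(y)<\infty$. By the discussion preceding Theorem~\ref{Blac}, nonzero functionals on a simple $\Cu$-semigroup are faithful, so $\lambda(y)>0$, and rescaling gives a faithful state $f:=\lambda/\lambda(y)\in\mathcal{S}(S,y)$.

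For the $(\Rightarrow)$ direction, I assume $f\in\mathcal{S}(S,y)$ is a faithful state and define
\[
\tilde{f}(x):=\sup\{f(x')\mid x'\ll x\},\qquad x\in S.
\]
The main technical task is to verify that $\tilde{f}$ belongs to $F(S)$. Monotonicity is immediate. Additivity follows by combining (O3) with the existence of rapidly increasing approximations guaranteed by (O2): for rapidly increasing sequences $(a_k)\to a$ and $(b_k)\to b$, one has $a_k+b_k\ll a+b$, and conversely every $z'\ll a+b$ lies below some $a_k+b_k$ by (O4). Preservation of suprema is the subtler point: given $x=\sup x_n$ and a rapidly increasing sequence $(y_m)$ with $\sup y_m=x$, the observation $y_m\ll y_{m+1}\leq x_{n(m)}$ (valid because $y_{m+1}\ll x$) upgrades to $y_m\ll x_{n(m)}$ by the standard fact that $a\ll b\leq c$ implies $a\ll c$, from which $\tilde{f}(x)=\sup_m f(y_m)\leq \sup_n \tilde{f}(x_n)$; the reverse inequality is clear from monotonicity.

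Once $\tilde{f}\in F(S)$ is established, two observations finish the argument: $\tilde{f}(y)\leq f(y)=1<\infty$ by monotonicity of $f$, and $\tilde{f}$ is nonzero because any rapidly increasing sequence $y_n\to y$ satisfies $y_n\neq 0$ for large $n$ (as $y\neq 0$), whence faithfulness of $f$ gives $\tilde{f}(y)\geq f(y_n)>0$. This contradicts the hypothesis that all nonzero functionals are infinite at $y$. I expect the main obstacle to be precisely the verification that the lower-semicontinuous envelope $\tilde{f}$ preserves suprema of increasing sequences --- the very property that separates functionals from states in the first place --- requiring careful interplay between (O1)--(O4) and the defining property of $\ll$.
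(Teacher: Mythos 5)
Your proof is correct and follows essentially the same route as the paper: rescale a nonzero (hence faithful) functional finite at $y$ to get a faithful state, and conversely pass from a faithful state $f$ to the functional $\tilde{f}(x)=\sup_{x'\ll x}f(x')$, which is finite and nonzero at $y$. The only difference is that you verify directly that $\tilde{f}$ preserves suprema and is additive (correctly, via (O2)--(O4)), whereas the paper simply cites Robert's result for this fact.
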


\begin{proof}
Suppose $f\in \mathcal{S}(S,y)$ is faithful. Then $\tilde{f}(x):=\sup_{x'\ll x}f(x')$ is a functional (see \cite{Robert13}) with $0<f(y)\leq 1$.  Conversely, as functionals on simple $\Cu$-semigroups are faithful, a suitable scaling of a functional with $\lambda(y)<\infty$  would give a faithful state on $S$.
\end{proof}

Another, related, useful observation is content of the following lemma. Let us denote by $\lambda_\infty$ the functional assigning the value $\infty$ to all (nonzero) $z\in S$.

\begin{lemma}\label{InfiniteMultiples}
Let $S$ be a simple $\Cu$-semigroup. If there is some $x\in S_{\ll\infty}$ such that $\lambda(x)=\infty$ for all $\lambda\in F(S)$, then $\lambda=\lambda_\infty$. In this case, for every (nonzero) $z\in S$, some multiple of $z$ is properly infinite.
\end{lemma}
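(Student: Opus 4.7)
The plan is to dispatch the two assertions separately, the first by a direct computation and the second by a proof-by-contradiction that exhibits a competing functional.

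For the first assertion, I would fix an arbitrary $\lambda\in F(S)$ and a nonzero $z\in S$, and show $\lambda(z)=\infty$. The key point is that in a simple $\Cu$-semigroup every nonzero element is full, so $\infty=\sup_{n} n\cdot z$. Combined with $x\ll\infty$, this gives some $n$ with $x\leq n\cdot z$. Applying the order-preserving map $\lambda$ and using the hypothesis $\lambda(x)=\infty$ then forces $\lambda(z)=\infty$, so $\lambda=\lambda_{\infty}$.

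For the second assertion, I would argue by contradiction. Suppose some nonzero $z\in S$ has no multiple that is properly infinite. By Lemma~\ref{NoStates}, the set $\mathcal{S}(S,z)$ is nonempty, so we obtain a state $f$ with $f(z)=1$. Following the construction alluded to in the proof of Lemma~\ref{FaithfulStates}, I would then pass to $\tilde{f}(w):=\sup_{w'\ll w}f(w')$, which by \cite{Robert13} is a functional on $S$. The same argument as in the first assertion, applied now to the inequality $x\leq n\cdot z$ rather than to a functional, gives $f(x)\leq n\cdot f(z)=n$, and hence $\tilde{f}(x)\leq f(x)<\infty$. Since $x$ is nonzero, $\lambda_{\infty}(x)=\infty$, so $\tilde{f}\neq\lambda_{\infty}$, contradicting the first assertion.

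The only point requiring care is the verification that $\tilde{f}$ is genuinely a functional (i.e.\ that it preserves suprema of increasing sequences), which is exactly the content of the passage from states to functionals in \cite{Robert13} and is taken for granted elsewhere in this section. Once this is in hand, the whole argument reduces to the observation that the distinguished element $x\in S_{\ll\infty}$ plays two roles simultaneously: it is the test element on which every functional is forced to be infinite, and it is the bounding element that keeps the newly constructed $\tilde{f}$ finite.
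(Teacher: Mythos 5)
Your proof is correct and follows essentially the same route as the paper's: the first claim via fullness of nonzero elements together with $x\ll\infty$, and the second by contradiction via Lemma~\ref{NoStates} and Robert's passage from a state $f\in\mathcal{S}(S,z)$ to the functional $\tilde{f}(w)=\sup_{w'\ll w}f(w')$. The only cosmetic difference is that you spell out the bound $\tilde{f}(x)\leq f(x)\leq n<\infty$, which the paper leaves implicit when it says $\tilde{f}$ has finite values.
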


\begin{proof}
The first statement is clear as for any two $x,y \in S_{\ll\infty}$ there is $m\in\mathbb{N}$ such that $x\leq m\cdot y$. For the second statement, pick (nonzero) $z\in S$ and suppose that there is a state $f\in \mathcal{S}(S,z)$. Then $\tilde{f}(x):=\sup_{x'\ll x}f(x')$ is a nonzero functional (\cite{Robert13}) with finite values, which contradicts the assumption. By Lemma \ref{NoStates}, some multiple of $z$ is properly infinite. 
\end{proof}

By \cite[Proposition 2.1]{OPR2} (which also follows from Goodearl and Handelman's paper \cite{GoodHand}), for elements $x,y$ in an ordered semigroup $S$, $x<_s y$  is equivalent to the statement that $f(x)< f(y)=1$ for all states $f$ normalized at $y$. It is not known to the authors whether, in the case that $S$ is a $\Cu$-semigroup, this is equivalent to the statement that $\lambda (x)< \lambda (y)=1$ for all functionals $\lambda$ normalized at $y$. A slightly weaker statement was shown by Robert:


%
%


\begin{lemma}(\cite{Robert13})\label{ComparisonFunctionals}
Let $S$ be a simple $\Cu$-semigroup, let $x,y\in S$. Suppose that $\lambda(x)<\lambda(y)$ for all functionals $\lambda\in F(S)$ finite on $S_{\ll\infty}$. Then for all $x'\ll x$ we have $x'<_s y$.
\end{lemma}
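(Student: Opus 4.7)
The plan is to reduce the conclusion to a statement about states via the characterization \cite[Proposition 2.1]{OPR2}, which says that $x'<_s y$ is equivalent to $f(x')<f(y)=1$ for every state $f\in\mathcal{S}(S,y)$. So I would fix $x'\ll x$, take an arbitrary $f\in\mathcal{S}(S,y)$ (if the set is empty there is nothing to show), and aim to prove $f(x')<1$. Note that $f(y)=1$ forces $y\neq 0$.

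The next step is to pass from the state $f$ to the associated functional $\tilde f(z):=\sup_{z'\ll z}f(z')$, which is a functional on $S$ by \cite{Robert13}. Since $z'\ll z$ implies $z'\leq z$, one has $\tilde f\leq f$ pointwise, so in particular $\tilde f(y)\leq 1$; and because $x'\ll x$, we also have $f(x')\leq \tilde f(x)$. The goal is then to apply the hypothesis to $\tilde f$, which requires verifying that $\tilde f$ is finite on $S_{\ll\infty}$.

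I expect this finiteness to be the main obstacle, but it can be handled cleanly using simplicity. Since $S$ is simple and $y\neq 0$, we have $\infty=\sup_n ny$; then for any $w\in S_{\ll\infty}$, the relation $w\ll\infty$ supplies some $n\in\mathbb{N}$ with $w\leq ny$, and hence $\tilde f(w)\leq n\tilde f(y)\leq n<\infty$. With this in hand, the hypothesis applied to $\tilde f$ yields $\tilde f(x)<\tilde f(y)$, and chaining the inequalities
\[
f(x')\leq \tilde f(x)<\tilde f(y)\leq f(y)=1
\]
gives $f(x')<1=f(y)$. As $f\in\mathcal{S}(S,y)$ was arbitrary, \cite[Proposition 2.1]{OPR2} then concludes $x'<_s y$. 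The core of the argument is really the interaction between the two bullets above, namely that the bound $\tilde f(y)\leq f(y)=1$ together with simplicity is what lets us promote the given state into a functional satisfying the hypothesis.
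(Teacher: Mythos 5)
Your argument is essentially correct, but it takes a genuinely more direct route than the paper, and it has one weak link that you should patch. The paper does not apply the hypothesis to the functional $\tilde f$ built from a state normalized at $y$ itself; instead it fixes a rapidly increasing sequence $(y_n)$ with supremum $y$, uses compactness of $F(S)$ (together with the fact that functionals preserve suprema) to find a single $n$ with $\lambda(x)<\lambda(y_n)$ for all $\lambda$ finite on $S_{\ll\infty}$, and then runs your chain of inequalities for states normalized at $y_n$, concluding $x'<_s y_n\leq y$. (It also treats separately the degenerate case where no functional is finite on $S_{\ll\infty}$.) The point of that detour is exactly the step you identify as the core of your argument: for a state $f$ normalized at $y_n\ll\infty$, the induced functional $\tilde f$ is automatically nonzero (a state that vanishes on some nonzero element of $S_{\ll\infty}$ vanishes on all of $S_{\ll\infty}$, contradicting $f(y_n)=1$), so the hypothesis genuinely applies to it. Your version trades the compactness argument for the observation that $\tilde f(y)\leq f(y)=1$, which is a nice simplification.

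The gap: when $f$ is normalized at $y$ and $y\notin S_{\ll\infty}$, the functional $\tilde f$ may be the zero functional (the paper emphasizes that a state can be nonzero yet vanish on all of $S_{\ll}$). The hypothesis of the lemma can only sensibly quantify over \emph{nonzero} functionals (the zero functional never satisfies a strict inequality), so in this case you cannot invoke it, and the middle inequality of your displayed chain, $\tilde f(x)<\tilde f(y)$, would read $0<0$. The fix is one line: if $\tilde f=0$ then $f(x')\leq\tilde f(x)=0<1=f(y)$ directly, so the conclusion $f(x')<f(y)$ holds in either case and the application of \cite[Proposition 2.1]{OPR2} goes through. With that case added (and the routine remark that $x'\propto y$ by simplicity, and that $\mathcal{S}(S,y)=\emptyset$ forces some multiple of $y$ to equal $\infty$, whence $x'<_s y$ trivially), your proof is complete and arguably cleaner than the one in the paper.
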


\begin{proof}
Let $(y_n)$ be a rapidly increasing nonzero sequence in $S$ with supremum $y$. 

Suppose first that there is no functional that is finite on $S_{\ll\infty}$. Then, by Lemma \ref{InfiniteMultiples}, some multiple of each $y_n$ is infinite. In particular, there is $k\in\mathbb{N}$ such that $(k+1) x\leq \infty =ky$, implying that $x'<_sy$.

Hence, from now on we may assume that for every $n\in \mathbb{N}$ there exists at least one functional normalized at $y_n$. As functionals preserve suprema of increasing sequences, and since $F(S)$ is compact (\cite{ERS11}), we find $n\in \mathbb{N}$ so that for all functionals $\lambda$, which are finite on $S_{\ll\infty}$, we have the strict inequality $\lambda(x)<\lambda(y_n)$. Let $f$ be a state normalized at $y_n$. Then $\tilde{f}(x):=\sup_{x'\ll x}f(x')$ is a nonzero functional (\cite{Robert13}). For all $x'\ll x$, we get
$$f(x')\leq \sup_{z\ll x} f(z)=\tilde{f}(x)<\tilde{f}(y_n)\leq f(y_n).$$
Hence, $f(x')<f(y_n)$ for all states $f$ normalized at $y_n$, and \cite[Proposition 2.1]{OPR2} shows that $x'<_s y_n\leq y$.
\end{proof}

From the above we were not able to deduce that $\lambda (x)< \lambda (y)=1$ for all functionals $\lambda$ normalized at $y$ implies that $x <_s y$, but we get a weaker statement, sufficient for our later application.

%
%
%

\begin{lemma}\label{UglyWayOut}
Let $S$ be a simple $\Cu$-semigroup and $x\in S$ such that $x\ll \infty$. Then there is some $m\in \mathbb{R}$ such that whenever $y\in S$ satisfies that $\lambda(x)\cdot m<\lambda(y)$ for all functionals $\lambda\in F(S)$ finite on $S_{\ll\infty}$, then $x<_s y$.
\end{lemma}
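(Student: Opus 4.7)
The plan is to reduce the statement to Lemma \ref{ComparisonFunctionals} by replacing $x$ with a carefully chosen element $\tilde{x}$ satisfying both $x \ll \tilde{x}$ and $\lambda(\tilde{x}) \leq m\lambda(x)$ for every functional $\lambda$. Granted such a $\tilde{x}$, if $\lambda(x)\cdot m < \lambda(y)$ for all $\lambda\in F(S)$ finite on $S_{\ll\infty}$, then $\lambda(\tilde{x}) \leq m\lambda(x) < \lambda(y)$ for every such $\lambda$, and Lemma \ref{ComparisonFunctionals} applied with $\tilde{x}$ in place of $x$ yields $x'' <_s y$ for every $x'' \ll \tilde{x}$. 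Since $x \ll \tilde{x}$, specializing to $x'' = x$ gives $x <_s y$.

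To construct $\tilde{x}$, the case $x = 0$ being trivial, assume $x \neq 0$. By (O2), write $x = \sup_n x_n$ with $x_1 \ll x_2 \ll \cdots$, and by discarding initial zeros assume $x_1 \neq 0$. In a simple $\Cu$-semigroup one has $\infty = \sup_n n x_1$, so the hypothesis $x \ll \infty$ yields an integer $m$ with $x \leq m x_1$. Applying axiom (O3) iteratively to $x_1 \ll x_2$ gives $m x_1 \ll m x_2$, and combining with $x \leq m x_1$ one obtains $x \ll m x_2$. Setting $\tilde{x} := m x_2$ we then have $x \ll \tilde{x} \leq m x$, and therefore $\lambda(\tilde{x}) \leq m\lambda(x)$ for every functional $\lambda$ (by monotonicity and additivity). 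This integer $m$ is the real number asserted by the lemma. Note also that $\tilde{x} \leq mx \ll \infty$, so $\tilde{x} \in S_{\ll\infty}$ and the functional values under consideration are finite.

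The main obstacle is the gap between the conclusion of Lemma \ref{ComparisonFunctionals}, which delivers only $x' <_s y$ for $x' \ll x$, and the desired $x <_s y$; in general, bridging this gap would require either continuity of states at $x$ or a way to enlarge $x$ while retaining functional control. The key point is that the hypothesis $x \ll \infty$ permits exactly such an enlargement: it produces a genuine element $\tilde{x} \gg x$ bounded above by a multiple of $x$, a construction that would be impossible for elements not way-below $\infty$. The vacuous subcase where no functional is finite on $S_{\ll\infty}$ is already handled inside Lemma \ref{ComparisonFunctionals} via Lemma \ref{InfiniteMultiples}, so no additional argument is needed there.
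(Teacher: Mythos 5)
Your proof is correct and follows essentially the same route as the paper: both arguments manufacture an element $\tilde{x}$ with $x\ll\tilde{x}\leq m\cdot x$ (so that its functional values are controlled by $m\lambda(x)$) and then feed it into Lemma \ref{ComparisonFunctionals} with $x$ playing the role of the way-below element. The only difference is cosmetic --- the paper gets $x\ll m\cdot x$ by squeezing $x\ll z_n\leq m\cdot x$ for a term $z_n$ of a rapidly increasing sequence with supremum $\infty$, whereas you take $\tilde{x}=m\cdot x_2$ from a rapidly increasing sequence under $x$ itself.
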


\begin{proof}
Find a rapidly increasing sequence $z_n\ll z_{n+1}$ such that $\sup_{n}z_n=\infty$. Then, as $x\ll \infty$, it follows that $x\ll z_n\ll \infty$ for some $n$ and there exists some $m\in \mathbb{N}$ such that $z_n\leq m\cdot x$. Hence, $x\ll m\cdot x$. Whenever now $\lambda(m\cdot x)=m\cdot \lambda(x)<\lambda(y)$ for all functionals finite on $S_{\ll\infty}$, then we get that $x<_s y$ by Lemma \ref{ComparisonFunctionals}.
\end{proof}

\section{The value $\beta(x,y)$}\label{SectionBeta}

In this section we explore a value associated to any two elements $x,y$ in a $\Cu$-semigroup $S$, called $\beta(x,y)$. This value is induced by extending the order-unit norm for partially ordered abelian groups as described in (\cite{GoodHand}) to semigroups. In Section \ref{SecCompProp} we will use it to define a comparison property on $\Cu$-semigroups.

Although we will be mainly concerned about $\Cu$-semigroups, we define $\beta(x,y)$ more generally for $x,y$ in an arbitrary ordered abelian semigroup  $(W,\leq)$. 

\begin{definition} (cf. \cite[Section 4]{GoodHand})
Let $(W,\leq)$ be an ordered abelian semigroup and $x,y\in W$ such that $x\propto y$.
 We define the real number $\beta(x,y)$ as $$\beta(x,y)=\inf\{l/k \mid kx\leq ly\text{ where } k,l\in\mathbb{N}\}.$$
\end{definition}

Recall that we denote by $W_y$ the order ideal of $W$ such that for all $x\in W_y$ there exists $n\in \mathbb{N}\text{ such that } x\leq ny$. We provide an equivalent definition of the value $\beta(x,y)$ in the case that $W_y$ allows a state normalized at $y$..

\begin{proposition}\label{equality}
 Let $(W,\leq)$ be an ordered abelian semigroup. If $x\in W_y$ and $\mathcal{S}(W_{y},y)\neq\emptyset$, then
$$\beta(x,y)=\inf\{l/k\mid kx\leq ly\}=\sup\{f(x)\mid f\in \mathcal{S}(W_{y},y)\}.$$ 
\end{proposition}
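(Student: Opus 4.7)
The first equality holds by the very definition of $\beta(x,y)$, so the content lies in establishing the second equality. Write $\alpha := \beta(x,y)$ and $\sigma := \sup\{f(x) : f \in \mathcal{S}(W_y, y)\}$; note $\alpha \in [0, \infty)$ because $x \in W_y$.

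\emph{Easy direction} ($\sigma \le \alpha$): For any state $f \in \mathcal{S}(W_y, y)$ and any $k, l \in \mathbb{N}$ with $kx \le ly$, applying $f$ yields $k f(x) \le l f(y) = l$, whence $f(x) \le l/k$. Taking the supremum over $f$ and the infimum over admissible $(k, l)$ gives $\sigma \le \alpha$.

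\emph{Hard direction} ($\alpha \le \sigma$): The plan is to reduce to the extension theorem (Theorem \ref{Blac}). Assume $\alpha > 0$ (otherwise trivial) and fix $\epsilon \in (0, \alpha)$. Choose a rational $r = p/q$ with $\alpha - \epsilon < r < \alpha$; by definition of $\alpha$, $qx \not\le py$. I would like to build a state $f_0$ on the subsemigroup $W_0 := \{my + nx : m, n \in \mathbb{N}_0\} \subseteq W_y$ (which contains $y$) by the formula $f_0(my + nx) := m + nr$. Since $y$ is an order unit of $W_y$ by definition, Theorem \ref{Blac} then extends $f_0$ to a state $f \in \mathcal{S}(W_y, y)$ with $f(x) = r > \alpha - \epsilon$; letting $\epsilon \to 0$ delivers $\sigma \ge \alpha$.

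\emph{Main obstacle.} The crux is to verify that the candidate $f_0$ is order preserving (hence well defined), i.e.\ that
\[
m_1 y + n_1 x \le m_2 y + n_2 x \ \text{in}\ W_y \ \Longrightarrow\ m_1 + n_1 r \le m_2 + n_2 r.
\]
The argument proceeds by case analysis on the signs of $m_1 - m_2$ and $n_2 - n_1$. The nontrivial cases rely on two ingredients: (i) the strict inequality $qx \not\le py$, which controls the slope $r$ relative to $\alpha$ and handles the cases where the $x$-coefficient on the right is strictly larger; and (ii) a cancellation property for $y$ in $W_y$ --- namely, $m_1 y + z \le m_2 y + z$ implies $m_1 \le m_2$ whenever $z \in W_y$ --- which is extracted from the hypothesis $\mathcal{S}(W_y, y) \ne \emptyset$ via Lemma \ref{NoStates}: no multiple of $y$ is properly infinite, so a fixed state takes finite values on $z$ and permits $y$-terms to be cancelled from both sides of an inequality. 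Combining (i) and (ii) in the various cases gives the required implication, and then Theorem \ref{Blac} completes the proof.
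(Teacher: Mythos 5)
Your overall architecture is the same as the paper's: define a candidate state on the subsemigroup $W_0=\{my+nx\}$, verify order-preservation by case analysis using a cancellation property for $y$ extracted from the existence of a state on $W_y$, and then extend by Theorem \ref{Blac}. However, there is a genuine gap in the one place where you deviate from the paper: the choice of slope. You set $f_0(my+nx)=m+nr$ with a rational $r$ \emph{strictly less} than $\alpha=\beta(x,y)$, whereas the paper uses the exact value $f_0(kx+ly)=k\beta(x,y)+l$. The problematic case is precisely the one where the $x$-coefficient on the right is strictly larger: from $m_1y+n_1x\leq m_2y+n_2x$ with $m'=m_1-m_2>0$ and $n'=n_2-n_1>0$ one needs $r\geq m'/n'$, and the most one can prove from such a relation (this is the paper's case (ii), argued by contradiction against a witness $ax\leq by$ with $b/a$ between $\beta(x,y)$ and $m'/n'$) is that $\beta(x,y)\geq m'/n'$. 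When $\beta(x,y)=m'/n'$ exactly, every $r<\beta(x,y)$ violates the required inequality and your $f_0$ fails to be order-preserving. Your ingredient (i), the non-relation $qx\not\leq py$, cannot repair this: it asserts the \emph{absence} of an upper bound on $x$, while what is needed is a lower bound on $r$ forced by the \emph{presence} of a relation pushing $x$ above a multiple of $y$.

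A concrete failure: take $W=\mathbb{N}$ with $x=y=1$. Then $\beta(x,y)=1$, the unique state normalized at $y$ is the identity, so $\sup\{f(x)\}=1$ and the proposition holds; but for any $r<1$ your $f_0$ is not even well-defined ($x=y$ forces $f_0(x)=f_0(y)=1\neq r$), and no state with $f(x)=r$ exists to be produced by the extension theorem. The conclusion $\sigma\geq\alpha$ must therefore be obtained by exhibiting a single state with $f_0(x)=\beta(x,y)$ itself, threading between the upper constraints $\beta(x,y)\leq l''/k''$ (from relations with more $x$ on the left) and the lower constraints $\beta(x,y)\geq l'/k'$ (from relations with more $x$ on the right); no strictly smaller value is guaranteed to satisfy the lower constraints. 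The rest of your argument (the easy direction, the use of cancellation via finiteness of a state on $W_y$, and the appeal to Theorem \ref{Blac}) is sound.
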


\begin{proof}
Let us start by defining $W_{0}:=\langle x,y\rangle=\{kx+ly\mid k,l\geq 0\}\subseteq W_y$. Note that the existence of a state $f\in \mathcal{S}(W_{y},y)$ implies the following property:

\begin{equation}\label{EqOne}
\mbox{Whenever } z\in W_0 \mbox{ and } ky+z \leq ly+z\mbox{, then } k\leq l.
\end{equation} 

Consider the map
\[
\begin{tabular}{c c c c}
   $f_{0} : $ & $W_{0}$ &$\longrightarrow$ & $\mathbb{R}^{+}$ \\
       & $kx+ly$& $\mapsto$  & $k\beta(x,y)+l$. \\

 \end{tabular}\\\,
\]
We claim that $f_{0}$ is a state in $\mathcal{S}(W_{0},y)$.
 
As additivity is clear, we subsequently prove that $f_{0}$ is well-defined and that it preserves the order. Namely, given two elements in $W_{0}$, $k_{1}x+l_{1}y$ and $ k_{2}x+l_{2}y$, we must show that $f_{0}(k_{1}x+l_{1}y)\leq f_{0}(k_{2}x+l_{2}y)$ 
if $k_{1}x+l_{1}y\leq k_{2}x+l_{2}y$. We divide the proof in four cases:
\begin{enumerate}[\rm (i)]
 \item $k_{1}\leq k_{2}$ and $l_{1}\leq l_{2}$,
 \item $k_{1}\leq k_{2}$ and $l_{1}\geq l_{2}$,
 \item $k_{1}\geq k_{2}$ and $l_{1}\leq l_{2}$,
 \item $k_{1}> k_{2}$ and $l_{1}> l_{2}$.
\end{enumerate}

We note that {\rm (i)} is trivial and {\rm (iv)} stands in contradiction to Condition (\ref{EqOne}), so let us start showing {\rm (ii)}. In this case, since $k_{1}, k_{2}, l_{1}, l_{2}$ are integers, we can write $k_{2}=k_{1}+k'$ and $l_{1}=l_{2}+l'$ for some $k',l'\in\mathbb{N}$, obtaining $k_{1}x+(l_{2}+l')y\leq (k_{1}+k')x+l_{2}y$. Setting $z:= k_{1}x+l_{2}y$, we get $l'y+z\leq k'x+z\text{ where } z\in W_{0}$. It follows then that $ml'y+z\leq mk'x+z$ for all $m\in \mathbb{N}$.

We have to show that $k_1\beta(x,y) + l_1 \leq k_2\beta(x,y) + l_2$, equivalently $\beta(x,y)\geq l'/k'$. To end up with a contradiction, suppose that $\beta(x,y) < l'/k'$. Then, there exist $a,b\in \mathbb{N}$ such that $\beta(x,y)\leq b/a < l'/k'$ and $ax\leq by$. Then 
$y + k'ax\leq y + k'by \leq al'y.$  Taking $m=a$ in the above equation for the second equality, we get 
$$(al'+1)y+z= aly'+z+y\leq ak'x+z+y\leq al'y+z,$$
which contradicts Condition (\ref{EqOne}). 

Finally to prove {\rm (iii)}, we start as before to obtain $mk''x+z'\leq ml''y+z'$ for all $m$, where $z'=k_{2}x+l_{1}y.$ As we can find $l_0$ such that $z\leq l_0y$, it follows that $mk''x\leq (ml''+l_0)y$ for all $m$ Hence, $\beta(x,y)\leq \inf \{\frac{ml''+l_0}{mk''} |\ m\in\mathbb{N}\}=l''/k''$, which implies that $f_{0}(k_{1}x+l_{1}y)\leq f_{0}(k_{2}x+l_{2}y)$ as desired.

By Theorem \ref{Blac}, $f_{0}$ extends to
 $\mathcal{S}(W_{y},y)$. As all the states on $\mathcal{S}(W_{y},y)$ satisfy that $f(x)\leq \beta(x,y)$ and $f_{0}(x)=\beta(x,y)$,
 we conclude that $\beta(x,y)=\sup\{f(x)\mid f\in \mathcal{S}(W_{y},y)\}$.
\end{proof}

\begin{remark}
\rm{Let $(W,\leq)$ be an ordered abelian semigroup, $x,y\in W$ and $x\in W_{y}$ and $\mathcal{S}(W_y,y)\neq \emptyset$. Then $$\{f(x)\mid f\in \mathcal{S}(W_{y},y)\} = 
\{f(x)\mid f\in \mathcal{S}(W,y)\}.$$ Indeed, let $\phi: \mathcal{S}(W_{y},y)\to \mathcal{S}(W,y)$ be the map that sends $f\mapsto \bar{f}$, where $\bar{f}$ is defined by 
$\bar{f}=f$ on $W_{y}$ and $\infty$ otherwise. Clearly $\bar{f}$ is a state and $\phi$ is well-defined. Notice that $\phi$ is injective. Hence, 
the map $\varphi: \mathcal{S}(W,y)\to \mathcal{S}(W_{y},y)$ defined by $f\mapsto f_{W_{y}}$ is surjective. It follows that $\{f(x)\mid f\in \mathcal{S}(W_{y},y)\}= \{f(x)\mid f\in \mathcal{S}(W,y)\}$, and that
$\beta(x,y)=\inf\{l/k\mid kx\leq ly\}=\sup\{f(x)\mid f\in \mathcal{S}(W,y)\}.$}
\end{remark}

The next lemmas show some properties of the value $\beta(x,y)$.

\begin{lemma}\label{properties}
 Let $(W,\leq)$ be an ordered abelian semigroup and $x,y,z\in W$.
\begin{enumerate}[\rm(i)]
 \item If $x \propto y$ and $y\leq z$ then $\beta(x,y)\geq \beta(x,z)$.
 \item If $y \propto z$ and $x\leq y$ then $\beta(x,z)\leq \beta(y,z)$.
\end{enumerate}
\end{lemma}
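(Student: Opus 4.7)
The plan is to work directly from the defining formula $\beta(x,y)=\inf\{l/k\mid kx\leq ly,\ k,l\in\mathbb{N}\}$ and show that in each case the infimum on one side is taken over a set containing the infimum set on the other side. The main observation is that the hypotheses ensure that $\beta(x,z)$ (respectively $\beta(y,z)$) is well-defined, i.e., that the relevant proportionality holds so the infimum is taken over a nonempty set. For (i), from $x\propto y$ we get $x\leq ny$ for some $n$, and then $y\leq z$ gives $x\leq nz$, so $x\propto z$. For (ii), $x\leq y$ and $y\propto z$ directly give $x\propto z$.

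For part (i), I would fix a pair $(k,l)\in\mathbb{N}^2$ with $kx\leq ly$. Since $y\leq z$ and addition preserves order, we have $ly\leq lz$ (by induction on $l$), hence $kx\leq lz$. This shows the set inclusion
\[
\{l/k\mid kx\leq ly,\ k,l\in\mathbb{N}\}\ \subseteq\ \{l/k\mid kx\leq lz,\ k,l\in\mathbb{N}\},
\]
and taking infima yields $\beta(x,y)\geq\beta(x,z)$.

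For part (ii), the argument is symmetric on the other variable: fix $(k,l)$ with $ky\leq lz$. Since $x\leq y$, one has $kx\leq ky\leq lz$, giving
\[
\{l/k\mid ky\leq lz,\ k,l\in\mathbb{N}\}\ \subseteq\ \{l/k\mid kx\leq lz,\ k,l\in\mathbb{N}\},
\]
and therefore $\beta(y,z)\geq\beta(x,z)$.

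I don't expect any real obstacle here: the only points to be careful about are (a) checking that both $\beta$-values on each side are defined (which the hypotheses guarantee as noted above), and (b) the use of the positivity of the order (which is standing assumption on $W$) to deduce $ly\leq lz$ from $y\leq z$ by adding $y\leq z$ to itself $l$ times. Neither needs more than a line.
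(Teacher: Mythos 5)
Your proof is correct and follows exactly the same route as the paper's: in each case you establish the inclusion of the defining index sets and compare infima. The extra remarks on well-definedness of the $\beta$-values and on deducing $ly\leq lz$ from $y\leq z$ are fine but not needed beyond what the paper already takes for granted.
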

\begin{proof}
$\rm(i)$
If $kx\leq ly$, then $kx\leq ly\leq lz$; thus, $$\{l/k\mid kx\leq ly\}\subseteq\{m/n\mid nx\leq mz\}.$$
Therefore, $\inf\{l/k\mid kx\leq ly\}\geq \inf\{m/n\mid nx\leq mz\}.$
       
$\rm(ii)$ If $ky\leq lz$ then $kx\leq ky\leq lz$; thus, $$\{l/k\mid ky\leq lz\}\subseteq \{m/n\mid nx\leq mz\}.$$
Therefore, $\inf\{l/k\mid ky\leq lz\}\geq \inf\{m/n\mid nx\leq mz\}$.
\end{proof}

Note that $\beta(x,y)< 1$ if and only if $x<_s y$. (Recall that $x<_s y$ if $(k+1)\cdot x\leq k\cdot y$ for some $k\in\mathbb{N}$.)

\begin{lemma}\label{properties2}
Let $(W,\leq)$ be an ordered abelian semigroup. Then:
  \begin{enumerate}[\rm(i)]
   \item If $x\in W$ and $\{y_{n}\}$ is a sequence in $W$ satisfying $x\propto y_{j}$ for all $j$, 
	$\beta(x,y_{1}+\ldots+y_{n})\leq (\sum^{n}_{j=1}\beta(x,y_{j})^{-1})^{-1}$,
   \item If $y\in W$ and $\{x_{n}\}$ is a sequence in $W$ satisfying $y\propto x_{j}$ for all $j$,
	$\beta(x_{1}+\ldots+x_{n},y)\leq \sum^{n}_{j=1}\beta(x_{j},y)$,
  \end{enumerate}
\end{lemma}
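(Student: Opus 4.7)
The plan is to prove both inequalities by the same template: approximate each $\beta$ value by a rational witness $l_j/k_j$ coming straight from its definition, then combine these witnesses into a single inequality for the summed element by scaling each of them so that one side becomes a common integer multiple, before summing.

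For (i), fix $\varepsilon>0$ and for each $j=1,\ldots,n$ choose $k_j,l_j\in\mathbb{N}$ with $k_jx\leq l_jy_j$ and $l_j/k_j<\beta(x,y_j)+\varepsilon$ (if $\beta(x,y_j)=0$, instead pick $l_j/k_j<\varepsilon$). To align the right-hand sides, set $N:=l_1l_2\cdots l_n$ and multiply the $j$-th inequality by the integer $N/l_j$ to obtain
\[
(Nk_j/l_j)\,x\;\leq\;N\,y_j.
\]
Summing over $j$ yields $M\,x\leq N(y_1+\cdots+y_n)$ with $M=N\sum_j k_j/l_j$, hence by definition
\[
\beta(x,y_1+\cdots+y_n)\;\leq\;N/M\;=\;\Bigl(\sum_{j=1}^n k_j/l_j\Bigr)^{-1}.
\]
Since $k_j/l_j>(\beta(x,y_j)+\varepsilon)^{-1}$, letting $\varepsilon\to 0$ gives $\beta(x,y_1+\cdots+y_n)\leq\bigl(\sum_j\beta(x,y_j)^{-1}\bigr)^{-1}$, as claimed.

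For (ii) we dually normalize the left-hand sides. Choose $k_j,l_j\in\mathbb{N}$ with $k_jx_j\leq l_jy$ and $l_j/k_j<\beta(x_j,y)+\varepsilon$, set $K:=k_1k_2\cdots k_n$, and multiply the $j$-th inequality by the integer $K/k_j$ to get $K\,x_j\leq(Kl_j/k_j)\,y$. Summing,
\[
K\sum_{j=1}^n x_j\;\leq\;\Bigl(\sum_{j=1}^n Kl_j/k_j\Bigr)\,y,
\]
so $\beta(x_1+\cdots+x_n,y)\leq\sum_j l_j/k_j$, and letting $\varepsilon\to 0$ finishes the proof.

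The only real work is the bookkeeping that ensures each multiplier $N/l_j$ in (i) and $K/k_j$ in (ii) is a positive integer; this is automatic once $N$ and $K$ are taken to be the displayed products. No axiom of $\Cu$-semigroups is used, so the argument goes through in any ordered abelian semigroup for which the relevant $\beta$ values are defined, and the case in which some $\beta(x,y_j)$ or $\beta(x_j,y)$ equals $0$ or $\infty$ is absorbed into the same limit $\varepsilon\to 0$ without modification.
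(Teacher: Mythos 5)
Your proof is correct and takes essentially the same approach as the paper: for (ii) the paper's argument (done for $n=2$) is exactly your combination $k_1k_2(x_1+x_2)\leq(k_2l_1+k_1l_2)y$, and the paper leaves (i) as ``similar,'' which your dual normalization $(Nk_j/l_j)x\leq Ny_j$ carries out correctly, including the degenerate case $\beta(x,y_j)=0$.
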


\begin{proof}
We only prove the second statement, since the first one is shown in a similar fashion. We assume $n=2$ and note that the general case then follows easily. 

Let $\epsilon>0$. For $i=1,2$ find $l_{i},k_{i}\in\mathbb{N}$ such that $\beta(x_{i},y)\leq \frac{l_{i}}{k_{i}}\leq \beta(x_{i},y)+\epsilon$ and $k_{i}x_{i}\leq l_{i}y$. Then
$k_1k_2(x_1+x_2)\leq k_2l_1y + k_1l_2 y$, so $$\beta(x_1+x_2,y)\leq \frac{k_2l_1+k_1l_2}{k_1k_2}=\frac{l_1}{k_1}+\frac{l_2}{k_2}\leq \beta(x_{1},y)+\beta(x_{2},y) +2\epsilon.$$
\end{proof}

\begin{remark}\label{almost_zero}
We would like to emphasize that given $x$ and a sequence $\{y_{n}\}$ belonging to $W$ such that $x<_s y_{j}$ for all $j$, we have:
 
$$\beta(x,y_{1}+y_{2}+\ldots+y_{k})\leq 1/k \text{ for all } k.$$
\end{remark}

\begin{proof}
Assume $k=2$ since it is easy to extend the proof to general $k\in\mathbb{N}$. Consider $y_{1}, y_{2}$ such that $(k_{1}+1)x\leq k_{1}y_{1}$ and $(k_{2}+1)x\leq k_{2}y_{2}$. Using \cite[Proposition 2.1]{OPR2},
there exists $k_{0}\in \mathbb{N}$ such that $(k+1)x\leq ky_{1}$ and $(k+1)x\leq ky_{2}$ for all $k\geq k_{0}$. Adding both inequalities, we obtain 
$ 2kx\leq k(y_{1}+y_{2})$ for all $k\geq k_0$. Thus, $ \beta(x, y_{1}+y_{2})\leq 1/2 .$
\end{proof}

\begin{lemma}\label{prop1}
Let $W$ be an abelian ordered semigroup and $x,y\in W$, such that $y\leq mx$ and $x\leq ny$ for some $m,n\in\mathbb{N}$. Then the following statements are equivalent.
\begin{itemize}
\item[(i)] $\beta(x,y)=0$ 
\item[(ii)] Some multiple of $x$ is properly infinite.
\item[(iii)] Some multiple of $y$ is properly infinite.
\item [(iv)] $\mathcal{S}(W,y)=\emptyset.$
\end{itemize}
\end{lemma}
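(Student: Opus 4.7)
The plan is to prove the cycle (i) $\Rightarrow$ (iv) $\Leftrightarrow$ (iii) $\Leftrightarrow$ (ii) $\Rightarrow$ (iii) $\Rightarrow$ (i), since three of the four arrows fall out quickly from results already established and only one direction takes any real work.

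First I would dispose of (iii) $\Leftrightarrow$ (iv) by simply invoking Lemma \ref{NoStates}, which is literally this equivalence for the element $y$. Next, for (ii) $\Leftrightarrow$ (iii), I would exploit the symmetry of the hypotheses $y\leq mx$ and $x\leq ny$. Assume $kx$ is properly infinite, so that $N(kx)\leq kx$ for every $N\geq 1$. From $y\leq mx$ I get $ky\leq mkx$, and then $N(ky)\leq Nm(kx)\leq kx$ for every $N$. Using now $x\leq ny$, I obtain $kx\leq nky$, which combined with $2nky\leq kx$ yields $2(nky)\leq nky$; thus $nky$ is properly infinite. The reverse implication is identical with the roles of $x$ and $y$ (and of $m,n$) interchanged.

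For (i) $\Rightarrow$ (iv), I argue by contradiction: suppose $\beta(x,y)=0$ and pick a state $f\in\mathcal{S}(W,y)$. Since $x\leq ny$, $f(x)\leq n<\infty$. For any pair $k,l\in\mathbb{N}$ with $kx\leq ly$, monotonicity and additivity of $f$ give $kf(x)\leq l$, so $f(x)\leq l/k$; taking the infimum yields $f(x)\leq\beta(x,y)=0$. But then $y\leq mx$ forces $1=f(y)\leq mf(x)=0$, the desired contradiction. Hence $\mathcal{S}(W,y)=\emptyset$.

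The remaining implication (iii) $\Rightarrow$ (i) is the one I expect to write out most carefully, though still short: if $Ny$ is properly infinite, an induction on $2Ny\leq Ny$ gives $jNy\leq Ny$ for every $j\geq 1$, and hence $ly\leq Ny$ for every $l\geq 1$ (choose $j$ with $jN\geq l$). Combining with $x\leq ny$ produces $kx\leq kny\leq Ny$ for every $k\in\mathbb{N}$, so $\beta(x,y)\leq N/k$ for every $k$, whence $\beta(x,y)=0$. The only mild obstacle in the whole argument is keeping track of the indices in (ii) $\Leftrightarrow$ (iii), where one has to produce an \emph{explicit} multiple of $y$ (resp.\ $x$) that is properly infinite, rather than merely an ideal-theoretic statement; once the trick of conjugating proper infiniteness through the two domination inequalities is in place, the rest is bookkeeping.
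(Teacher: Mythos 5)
Your proof is correct, and it uses the same key ingredients as the paper (Lemma \ref{NoStates} for the equivalence with (iv), and direct manipulation of the inequalities $y\leq mx$, $x\leq ny$ for the rest); the only structural difference is how (i) is linked to the other conditions. The paper proves (i)$\Rightarrow$(ii) purely combinatorially: from $\beta(x,y)=0$ it picks $k,l$ with $kx\leq ly$ and $l/k\leq 1/2m$, so that $2kx\leq 2ly\leq 2lmx\leq kx$, exhibiting an explicitly properly infinite multiple of $x$ without ever mentioning states. You instead prove (i)$\Rightarrow$(iv) by evaluating a hypothetical state $f\in\mathcal{S}(W,y)$ at $x$ and deriving $f(y)=0$, then route back to (ii) and (iii) through Lemma \ref{NoStates}. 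Both are short and valid; the paper's version has the minor advantage of staying entirely inside the order structure, while yours makes the role of states more transparent and avoids having to choose the threshold $1/2m$. Your bookkeeping in (ii)$\Leftrightarrow$(iii) and (iii)$\Rightarrow$(i) (iterating $2z\leq z$ to get $jz\leq z$ and conjugating through the two domination inequalities) is sound and is essentially what the paper's one-line claims implicitly rely on.
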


\begin{proof}
If $\beta(x,y)=0$, then we can find $k,l\in\mathbb{N}$ such that $kx\leq ly$ and $l/k\leq 1/2m$. Then $2kx\leq 2ly \leq 2lm x\leq kx$, hence $kx$ is properly infinite. It then follows further that $2ny\leq 2nmx\leq x\leq ny$, so $y$ is properly infinite. It is easy to see that {\rm (iii)} implies {\rm (i)}. Finally, the equivalence with {\rm (iv)} follows from Lemma \ref{NoStates}.

\end{proof}

\section{Comparison Properties}\label{SecCompProp}

In this section we recall the properties of corona factorization (CFP) and $\omega$-comparison for $\Cu$-semigroups as defined in \cite{OPR2} and show a few equivalent reformulations of each property. We use the value $\beta(x,y)$ from Section 3 to introduce a new comparison notion, called $\beta$-comparison property, which we also prove to be equivalent to $\omega$-comparison for many $\Cu$-semigroups of importance. We demonstrate differences of all mentioned properties with the help of examples. Finally, we discuss the relation between the CFP and the non-existence of stably infinite but finite elements.

Recall that a sequence $(x_n)_n$ in an ordered abelian semigroup $W$ is called full, if it is increasing and for any $z'\ll z$, one has $z'\leq m\cdot x_n$ for some $n,m\in\mathbb{N}.$ For future use note that, if $(x_n)$ is a full sequence in a $\Cu$-semigroup $S$, then $(n\cdot x_n)$ is an increasing sequence in $S$ with supremum equal to the maximal element $\infty$ in $S$. (In particular, a maximal element exists in $S$.) Indeed, for any $t\ll s$ one has $t\leq n \cdot x_n$ for some $n$. Now taking the supremum on both sides (first on the right, then on the left hand side) yields $s\leq \sup_n (n\cdot x_n)$.

We will also want to change a full sequence to a more suitable one in the following way.

\begin{lemma}\label{NewFullSequence}
Let $S$ denote a $\Cu$-semigroup, and let $(x_n)_n$ be a full sequence in $S$. Then there is a full sequence $(x_n')_n$ in $S$ such that $x_n'\ll x_n$ for each $n\in \mathbb{N}$.
\end{lemma}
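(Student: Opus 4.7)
The plan is to use axiom (O2) to approximate each $x_n$ from below by a rapidly increasing sequence, and then extract the desired $(x_n')$ by a diagonal argument driven by an enumeration of $\mathbb{N}\times\mathbb{N}$. For each $n$ I would fix, via (O2), a rapidly increasing sequence $(y_n^k)_k$ with $y_n^k\ll y_n^{k+1}$ and $\sup_k y_n^k=x_n$. I would also fix a bijective enumeration $(N_j,k_j)_{j\in\mathbb{N}}$ of $\mathbb{N}\times\mathbb{N}$ satisfying $N_j\leq j$ (e.g., Cantor's diagonal pairing). Then I would set $x_n':=y_n^{k(n)}$ for indices $k(n)$ chosen recursively.

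At stage $n$, I would pick $k(n)$ large enough so that $y_n^{k(n)}$ dominates both $y_{n-1}^{k(n-1)}$ (vacuously at $n=1$) and $y_{N_n}^{k_n}$. Such a $k(n)$ exists: indeed $y_{n-1}^{k(n-1)}\ll x_{n-1}\leq x_n=\sup_k y_n^k$, and since $N_n\leq n$ also $y_{N_n}^{k_n}\ll x_{N_n}\leq x_n=\sup_k y_n^k$, so in each case the $\ll$-relation produces a bounding index. The resulting sequence satisfies $x_n'=y_n^{k(n)}\ll y_n^{k(n)+1}\leq x_n$, hence $x_n'\ll x_n$, and $x_{n-1}'\leq x_n'$ by construction. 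For fullness, given $z'\ll z$ in $S$, I would interpolate $z'\ll z''\ll z$ via (O2); applying fullness of $(x_n)$ yields $z''\leq m x_N$ for some $N,m\in\mathbb{N}$, and then $z'\leq m y_N^{k_0}$ for some $k_0$, because $z'\ll m x_N=\sup_k m y_N^k$ by (O4). Choosing $j$ with $(N_j,k_j)=(N,k_0)$, the construction at stage $j$ guarantees $y_N^{k_0}\leq y_j^{k(j)}=x_j'$, so $z'\leq m x_j'$, as required.

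The main obstacle I anticipate is balancing the two competing demands on $k(n)$: keeping $(x_n')$ monotonically increasing while making the diagonal $(y_n^{k(n)})$ cofinal in the doubly indexed family $\{y_n^k : n,k\in\mathbb{N}\}$. A naive choice such as $x_n':=y_n^n$ need not suffice, since the minimal $k$ with $y_N^{k_0}\leq y_n^k$ can grow in $n$ faster than $n$ itself. The Cantor-style enumeration is what resolves this: it ensures every pair $(N,k_0)$ is handled at some finite stage $j$, at which point there is still enough room in the choice of $k(j)$ to absorb $y_N^{k_0}$ into $x_j'$ without breaking monotonicity with the previously chosen $x_{j-1}'$.
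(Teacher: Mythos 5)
Your proof is correct and follows essentially the same strategy as the paper's: approximate each $x_n$ from below via (O2) and run a diagonal bookkeeping argument over the doubly indexed family $\{y_n^k\}$ so that the chosen approximants eventually dominate every $y_N^{k_0}$. The only cosmetic difference is that the paper absorbs all pairs $(i,j)$ with $i,j\le n-1$ at stage $n$ by taking a maximum over finitely many indices, whereas you handle one pair per stage via a Cantor enumeration; both devices accomplish the same thing.
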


\begin{proof}
For each $n\in\mathbb{N}$ find a rapidly increasing sequence $(x_n^k)_k$ with supremum $x_n$. Starting with $x_1^1\ll x_1\leq x_2$, we find $k_2$ such that $x_1^1\leq x_2^{k_2}$. Inductively, we find an increasing sequence $(y_n)_n:=(x_n^{k_n})_n$ with $x_i^j\leq x_n^{k_n}\ll x_n$ for each $i,j,n\in \mathbb{N}$ such that $1\leq i,j\leq (n-1)$.  Let us show that the sequence $(y_n)_n$ is full. 

Let $z'\ll z$, and find $z''$ such that $z'\ll z''\ll z$. By assumptions on $(x_n)_n$, we find $m$ and $k$ such that $z'\ll z''\leq m\cdot x_k$. Therefore, there is some $l$ such that $z'\leq m\cdot x_k^l$. We conclude that $$z'\leq m\cdot x_k^l\leq m\cdot x_M^{k_M}=m\cdot y_M,$$ where $M=\max\{k,l\}+1$.
\end{proof}

\begin{definition}(\cite{OPR2})\label{DefCFP}
Let $W$ be an ordered abelian semigroup.
 \begin{itemize}
\item   $W$ satisfies the corona factorization property (CFP) if, given any full sequence $(x_{n})_n$ in $W$, any sequence $(y_{n})_n$ in $W$, an element $x'$ in $W$ such that $x'\ll x_{1}$, and a positive integer $m$ satisfying $x_{n}\leq m y_{n}$ for all $n$, then there
exists a positive integer $k$ such that $x'\leq y_{1}+\ldots+y_{k}$.
 \item $W$ satisfies the strong corona factorization property (StCFP) if, given $x',x\in W$, a sequence $(y_n)$ in $W$, and a positive integer $m$ such that $x'\ll x\leq my_n$ for all $n$, then there exists a positive integer $k$ such that $x'\leq y_1+\ldots+y_k$.
\end{itemize}
\end{definition}

Note that the CFP and the StCFP are equivalent in the simple case. The terminology comes from the fact that a $\sigma$-unital C*-algebra has the corona factorization property if and only if its Cuntz semigroup $W(A)$ has the CFP as defined above (\cite{OPR2}).

The following proposition was basically shown in \cite{RobertRordam}, but our version differs from theirs in that we reduce to elements in $S_{\ll\infty}$. Recall that we denote by $S_{\ll\infty}$ the set of all $y\in S$ such that $y\ll\infty$. 

\begin{proposition}(\cite{RobertRordam})\label{PropCFP}
Let $S$ be a $\Cu$-semigroup. Then the following statements are equivalent:
\begin{itemize}
\item[(i)] $S$ has the CFP.
\item[(ii)] Given any full sequence $(x_{n})_n$ in $S$, any sequence $(y_{n})_n$ in $S_{\ll\infty}$, an element $x'$ in $S$ such that $x'\ll x_{1}$, and a positive integer $m$ satisfying $x_{n}\leq m y_{n}$ for all $n$, then there exists a positive integer $N$ such that $x'\leq y_{1}+\ldots+y_{N}$.
\item[(iii)] Given any full sequence $(x_{n})_n$ in $S$, any sequence $(y_{n})_n$ in $S_{\ll\infty}$ and any positive integer $m$ satisfying $x_{n}\leq m\cdot y_{n}$ for all $n$, then $\infty = \sum_{n=1}^\infty y_{n}$.
\item[(iv)] Given a sequence $(y_n)_n$ in $S_{\ll\infty}$ such that $m\cdot \sum_{n=k}^\infty y_n=\infty$ for some $m$ and all $k\in\mathbb{N}$, then $\sum_{n=1}^\infty y_n=\infty$.
\end{itemize}
\end{proposition}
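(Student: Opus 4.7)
The plan is to establish the cycle (i)$\Rightarrow$(ii)$\Rightarrow$(iv)$\Rightarrow$(iii)$\Rightarrow$(i). The first implication is immediate since (ii) is the special case of (i) in which the $(y_n)$ are required to lie in $S_{\ll\infty}$.

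For (iii)$\Rightarrow$(i), starting from the data of (i), I would first produce a full sequence $(x_n')$ with $x_n'\ll x_n$ for all $n$ and additionally $x'\ll x_1'$. This is a minor strengthening of Lemma \ref{NewFullSequence}: in the inductive construction there, one is free to start the sequence at any member of a rapidly increasing approximation of $x_1$, so pick an index $k_0$ with $x'\leq x_1^{(k_0-1)}\ll x_1^{(k_0)}=:x_1'$. Next, write each $y_n=\sup_k y_n^{(k)}$ using (O2); then $x_n'\ll x_n\leq m y_n=\sup_k m y_n^{(k)}$ (by (O4)) yields some $y_n':=y_n^{(k_n)}\ll y_n$ with $x_n'\leq m y_n'$, and $y_n'\ll y_n\leq\infty$ places $y_n'$ in $S_{\ll\infty}$. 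Applying (iii) to $(x_n'), (y_n'), m$ produces $\sum_n y_n'=\infty$; combined with $x'\ll x_1'\leq\sum_n y_n'$, this forces $x'\leq y_1'+\cdots+y_k'\leq y_1+\cdots+y_k$ for some $k$.

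For (iv)$\Rightarrow$(iii), I would show that $\sum_{j\geq k}x_j=\infty$ for every $k$ and then apply (iv) to $(y_n)$. Fullness of $(x_n)$ (together with the ``for all sufficiently large $n$'' clause) provides, for any $z\ll\infty$, integers $M$ and $n_0\geq k$ with $z\leq Mx_{n_0}$; monotonicity of $(x_n)$ gives $Mx_{n_0}\leq x_{n_0}+\cdots+x_{n_0+M-1}\leq\sum_{j\geq k}x_j$. Hence $m\sum_{j\geq k}y_j=\sum_{j\geq k}my_j\geq\sum_{j\geq k}x_j=\infty$, and (iv) yields $\sum_j y_j=\infty$.

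The crux is (ii)$\Rightarrow$(iv). Assume $(y_n)$ lies in $S_{\ll\infty}$ with $m\sum_{j\geq k}y_j=\infty$ for every $k$, and fix an arbitrary $z\ll\infty$; I aim to show $z\leq\sum_j y_j$. Writing $\infty=\sup_n w_n$ rapidly increasing (by (O2)), pick $n_0$ with $z\leq w_{n_0}\ll w_{n_0+1}$ and set $x_n:=w_{n_0+n}$, so that $(x_n)$ is a full sequence, each $x_n\ll\infty$, and $z\ll x_1$. Using $\infty=m\sum_{j\geq N_{n-1}+1}y_j=\sup_N m(y_{N_{n-1}+1}+\cdots+y_N)$ together with $x_n\ll\infty$, one recursively chooses indices $0=N_0<N_1<N_2<\cdots$ with $x_n\leq m Y_n$, where $Y_n:=y_{N_{n-1}+1}+\cdots+y_{N_n}$. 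Each $Y_n$ is a finite sum of elements $\ll\infty$ and hence itself lies in $S_{\ll\infty}$ by iterating (O3) with $\infty+\infty=\infty$. Applying (ii) to $(x_n), (Y_n), m$ and $z\ll x_1$ delivers $z\leq Y_1+\cdots+Y_k=y_1+\cdots+y_{N_k}\leq\sum_j y_j$ for some $k$; letting $z$ run over the way-below approximants of $\infty$ concludes $\sum_j y_j=\infty$. The main obstacle here is engineering the diagonal construction so that the $Y_n$ partition the $y$-indices disjointly (producing the telescoping identity) while remaining in $S_{\ll\infty}$ so that (ii) is applicable.
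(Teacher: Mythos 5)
Your proof is correct, but it organizes the cycle differently from the paper and thereby avoids one of its tricks. The paper proves (i)$\Leftrightarrow$(ii) by the same reduction you use (rapidly increasing approximations of the $y_n$ plus Lemma \ref{NewFullSequence}), then goes (ii)$\Rightarrow$(iii)$\Rightarrow$(iv)$\Rightarrow$(ii). Its step (ii)$\Rightarrow$(iii) uses a double-indexing argument: an injective map $\alpha:\mathbb{N}\times\mathbb{N}\to\mathbb{N}$ produces, for each $k$ and $l$, the bound $x_k\leq\sum_{n=1}^{\infty}y_{\alpha(l,n)}$, whence $\infty\cdot x_k\leq\sum_{l}\sum_{n}y_{\alpha(l,n)}\leq\sum_n y_n$; the block decomposition of the $y$'s into consecutive packets $Y_n$ with $x_n'\leq m Y_n$ is then used for (iii)$\Rightarrow$(iv). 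You run the cycle the other way, (ii)$\Rightarrow$(iv)$\Rightarrow$(iii)$\Rightarrow$(i), and the block decomposition carries the whole weight of (ii)$\Rightarrow$(iv): fixing $z\ll\infty$ and manufacturing a full sequence from a rapidly increasing sequence with supremum $\infty$ lets you dispense with the double-sum trick entirely, while (iv)$\Rightarrow$(iii) becomes the easy estimate $m\sum_{j\geq k}y_j\geq\sum_{j\geq k}x_j=\infty$ (essentially the paper's (iv)$\Rightarrow$(ii)). The two routes use the same toolkit --- (O2)--(O4), fullness forcing $\sup_n(n\cdot x_n)=\infty$, and passage to $S_{\ll\infty}$ --- and are of comparable length; yours is arguably slightly more economical in that every implication is either trivial or a single application of the block/approximation idea, at the cost of having to check explicitly that the truncated sequence $(w_{n_0+n})_n$ is full and that finite sums of elements of $S_{\ll\infty}$ stay in $S_{\ll\infty}$, both of which you do correctly.
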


\begin{proof}
We will show first that, in the definition of the CFP, we can reduce to a sequence $(y_n)_n\in S_{\ll\infty}$, i.e., condition (ii) implies the CFP. So let $(x_{n})_n$ be a full sequence in $S$, $(y_{n})_n$ a sequence in $S$, $x'\ll x_{1}$ in $S$, and $m$ a positive integer $m$ satisfying $x_{n}\leq m y_{n}$ for all $n$. Use Lemma \ref{NewFullSequence} to find a new full sequence $(x'_n)_n$ with $x_n'\ll x_n$, and we may choose the sequence such that $x'\ll x'_1$. Choose for each $n$ a rapidly increasing sequence $(y_n^k)_k$ with supremum $y_n$. Then, for each $n\in\mathbb{N}$, there is $k(n)$ such that $x'_n\leq m\cdot y_n^{k(n)}$.
As each $y_n^k\in S_{\ll\infty}$, we can apply (ii) to find $N$ in $\mathbb{N}$ such that $x'\leq y_1^{k(1)}+\ldots + y_N^{k(N)}\leq y_1+\ldots +y_N$.

The converse, that (i) implies (ii), is trivial.

Let us then suppose statement (ii) and that we are given a full sequence $(x_{n})_n$ in $S$, a sequence $(y_{n})_n$ in $S_{\ll\infty}$ and a positive integer $m$ satisfying $x_{n}\leq m y_{n}$ for all $n$. We choose an injective map $\alpha:\mathbb{N}\times \mathbb{N} \rightarrow \mathbb{N}$. Fix some $k,l \in\mathbb{N}$. Then for any $x'\ll x_{k}$, (ii) gives some $N\in\mathbb{N}$ such that $x'\leq \sum_{n=1}^N y_{\alpha(l,n)}$. Since $x'\ll x_{k}$ was arbitrary, we get, taking supremum on both sides, that $x_{k}\leq \sum_{n=1}^\infty y_{\alpha(l,n)}$. Hence, $$\infty\cdot x_{k}\leq \sum_{l=1}^\infty \sum_{n=1}^\infty y_{\alpha(l,n)}\leq \sum_{n=1}^\infty y_n.$$ The latter holds for arbitrary $k$; thus, $\infty=\sup_k \left ( \infty \cdot x_k\right ) \leq  \sum_{n=1}^\infty y_n$.

Let us suppose (iii) to hold and that we are given a sequence $(y_n)_n$ in $S_{\ll\infty}$ such that $m\cdot\sum_{n=k}^\infty y_n=\infty$ for all $k\in\mathbb{N}$. Pick an arbitrary full sequence $(x_n)_n$ and, with the help of Lemma \ref{NewFullSequence}, find a new full sequence $(x_n')_n$ such that $x_n'\ll x_n$ for all $n$ in $S$. Then, for any $n,k\in \mathbb{N}$ we have that $x_n'\ll x_n\leq \infty = m\cdot \sum_{j=k}^\infty y_j$. Hence, for each $n$ and $k$ there is $N(n,k)\in\mathbb{N}$ such that $x_n'\leq m\cdot \sum_{j=k}^{N(n,k)} y_j$. We choose $z_1:=\sum_{j=1}^{N(1,1)} y_j$, and then, inductively, for given $z_n=\sum_{j=k}^{M} y_j$, we choose $z_{n+1}:=\sum_{j=M+1}^{N(n+1,M+1)} y_j$. (Note that each $z_n\in S_{\ll\infty}$.) It follows that $x_n'\leq m\cdot z_n$ for all $n$, and, by (iii), we obtain that $\infty= \sum_{n=1}^\infty z_n= \sum_{n=1}^\infty y_n$.

Finally, suppose that  (iv) holds and that we are given a full sequence $(x_n)_n$ in $S$, some $x'\ll x_1$, a sequence $(y_n)_n$ in $S_{\ll\infty}$, and a positive integer $m$ such that $ x_n \leq m\cdot y_n$ for each $n$. Then for all $l,k$ such that $l\geq k$, $$m\cdot \sum_{n=k}^\infty y_n\geq \sum_{n=l}^\infty m\cdot y_n \geq \sum_{n=l}^\infty x_n\geq \infty\cdot x_l.$$ Taking supremum over $l$, $m\cdot \sum_{n=k}^\infty y_n=\infty$ for each $k$, which by {\rm(iv)} implies that $\sum_{n=1}^\infty y_n=\infty$. Hence, $x'\ll x \leq \sum_{n=1}^\infty y_n$, which shows the existence of some $k$ such that $x'\leq \sum_{n=1}^k y_n$. Thus, (iv) implies (ii), which completes the proof.

\end{proof}

\begin{definition}
An ordered abelian semigroup $W$  has the $\omega$-comparison property if whenever $x',x,$ $ y_{0}, y_{1}, y_{2},\ldots$ are elements in $W$
such that $x<_{s}y_{j}$ for all $j$ and $x'\ll x$, then $x'\leq y_{0}+y_{1}+\ldots+y_{n}$ for some $n$.
\end{definition}

The following lemma constitutes a reduction step in the proof of Proposition \ref{Omega}.

\begin{lemma}\label{ReductionStepOmega}
Let $S$ be a $\Cu$-semigroup. In the definition of $\omega$-comparison one may, without loss of generality, assume that $y_j\ll\infty$ for all $j$. That is, $\omega$-comparison is equivalent to the following property:
\begin{itemize}
\item  Whenever $x',x,$ $ y_{0}, y_{1}, y_{2},\ldots$ are elements in $S$ such that $x<_{s}y_{j}\ll \infty$ for all $j$ and $x'\ll x$, then $x'\leq y_{0}+y_{1}+\ldots+y_{n}$.
\end{itemize}
\end{lemma}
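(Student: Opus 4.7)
The plan is straightforward: one direction is trivial since the restricted property is a special case of $\omega$-comparison, so I would focus on the converse. Assume the restricted property, and suppose $x', x, y_0, y_1, \ldots$ satisfy $x' \ll x$ and $x <_s y_j$ for all $j$; the goal is to produce $n$ with $x' \leq y_0 + \cdots + y_n$. The basic idea is to replace each $y_j$ by a suitable approximant $\tilde y_j \ll \infty$ lying below $y_j$ that still dominates $x$ (or a slight shrinking of $x$) in the $<_s$ sense, and then invoke the restricted hypothesis.

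To set this up, I would first use (O2) to interpolate $x' \ll x'' \ll x$, and then, for each $j$, write $y_j = \sup_k y_j^k$ with $(y_j^k)_k$ rapidly increasing. Every term of such a sequence is automatically way-below $\infty$, since $y_j^k \ll y_j^{k+1} \leq \infty$. The next step is to upgrade the inequality $(k_j+1)x \leq k_j y_j$ (coming from $x <_s y_j$) to a way-below inequality: by (O3) applied iteratively, $x'' \ll x$ gives $(k_j+1)x'' \ll (k_j+1)x$, and combining with $(k_j+1)x \leq k_j y_j = \sup_k k_j y_j^k$ (using (O4) for the last equality) I can find $k(j)$ with $(k_j+1)x'' \leq k_j y_j^{k(j)}$. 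This gives $x'' <_s y_j^{k(j)} \ll \infty$ for every $j$.

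Now the restricted hypothesis applies directly to $x' \ll x''$ together with the sequence $\bigl(y_j^{k(j)}\bigr)_j$, yielding $n$ with
\[
x' \leq y_0^{k(0)} + \cdots + y_n^{k(n)} \leq y_0 + \cdots + y_n,
\]
which is the desired conclusion. The only real obstacle is the middle step: converting a $\leq$-inequality that witnesses $<_s$ into an analogous one where the right-hand side consists of compactly-approximable (and in particular $\ll \infty$) pieces, without losing the $<_s$ relation. The slack obtained by passing from $x$ down to $x''$, combined with axioms (O3) and (O4), is exactly what lets the way-below relation push through a fixed integer multiple and into the approximating sequence.
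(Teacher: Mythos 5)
Your proposal is correct and follows essentially the same route as the paper: interpolate $x'\ll x''\ll x$, pass the $<_s$-witnessing inequality through a rapidly increasing approximating sequence for each $y_j$ (using that $(k_j+1)x''\ll (k_j+1)x\le k_jy_j=\sup_k k_jy_j^k$ to extract a term $y_j^{k(j)}\ll\infty$ with $x''<_s y_j^{k(j)}$), and then apply the restricted property to $x'\ll x''$ and the new sequence. No gaps.
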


\begin{proof}
We suppose that we are given a $\Cu$-semigroup $S$ for which we only know the condition of $\omega$-comparison to hold when $y_j\ll\infty$ for all $j$. We will show that then the $\omega$-comparison holds.

Let $x',x,$ $ y_{0}, y_{1}, y_{2},\ldots$ be elements in $S$ such that $x<_{s}y_{j}$ for all $j$ and $x'\ll x$. We find $x''$ such that $x'\ll x''\ll x$. For each fixed $j$ there is some $n\in \mathbb{N}$ such that $(n+1)x''\ll n y_j$. Choosing a rapidly increasing sequence $(y_j^k)_k$ with supremum $y_j$, the sequence $(n\cdot y_j^k)_k$ increases rapidly and has  supremum $n\cdot y_j$. It follows that $x''<_s y_j^l\ll \infty$ for some $l=l(j)$. Set $y_j':=y_j^{l(j)}$ for each $j$. Now $x'\ll x'' <_sy_j' \ll\infty$ and our assumption implies that $x'\leq \sum_{j=1}^N y_j'\leq \sum_{j=1}^N y_j$.
\end{proof}

For equivalent notions of $\omega$-comparison we only consider simple $\Cu$-semigroups. The reason for restricting ourselves to simple $\Cu$-semigroups is that in the definition of the $\omega$-comparison, as it was introduced in \cite{OPR2}, there is no assumption on fullness of $x$. In this way, $\omega$-comparison is more similar to the strong CFP than the CFP. 

\begin{proposition}\label{Omega}
Let $S$ be a simple $\Cu$-semigroup. Then the following statements are equivalent.
\begin{itemize}
\item[(i)] $S$ has $\omega$-comparison.
\item[(ii)] Whenever $(y_n)$ is a sequence of nonzero elements in $S_{\ll\infty}$ such that $y_n <_s y_{n+1}$ for all $n$, then $\sum_{n=1}^\infty y_n=\infty$ (in $S$). 
\item [(iii)] Whenever $(y_n)$ is a sequence in $S_{\ll\infty}$ such that $\lambda \left (\sum_{n=k}^\infty y_n\right )=\infty$ for all $k\in\mathbb{N}$ and all functionals $\lambda\in F(S)$, then $\sum_{n=1}^\infty y_n=\infty$.
\item [(iv)]Whenever $(y_n)$ is a sequence of nonzero elements in $S_{\ll\infty}$ such that $\lambda \left (\sum_{n=1}^\infty y_n\right )=\infty$ for all functionals $\lambda\in F(S)$, then $\sum_{n=1}^\infty y_n=\infty$.
\end{itemize}
\end{proposition}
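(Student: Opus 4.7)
My plan is to establish the equivalences by taking $(iv)$ as the central statement and showing $(iv) \Leftrightarrow (i)$, $(iv) \Leftrightarrow (ii)$, and $(iv) \Leftrightarrow (iii)$. For the easy direction $(iv) \Rightarrow (i)$: by Lemma \ref{ReductionStepOmega}, it suffices to verify $\omega$-comparison for sequences $(y_j)$ with $y_j \ll \infty$. Given $x' \ll x$ with $x <_s y_j$ for every $j$ (and $x \neq 0$, else trivial), every nonzero functional $\lambda$ is faithful on the simple $\Cu$-semigroup $S$, so $\lambda(x) > 0$. The relation $x <_s y_j$ forces $\lambda(y_j) > \lambda(x)$ whenever $\lambda(y_j) < \infty$, while $\lambda(y_j) = \infty$ forces $\lambda = \lambda_\infty$ by Lemma \ref{InfiniteMultiples} (and the inequality is then trivial). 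Either way, $\sum_j \lambda(y_j) = \infty$, so $\lambda(\sum_j y_j) = \infty$ for every nonzero $\lambda$. Applying $(iv)$ gives $\sum_j y_j = \infty$, and then $x' \ll x$ yields $x' \leq \sum_{j=1}^N y_j$ for some $N$.

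For $(i) \Rightarrow (iv)$: given $(y_n)$ nonzero in $S_{\ll\infty}$ with $\lambda(\sum_n y_n) = \infty$ for every nonzero $\lambda$, I fix $x' \ll x$ in $S_{\ll\infty}$ and aim for $N$ with $x' \leq \sum_{n=1}^N y_n$. Apply Lemma \ref{UglyWayOut} to obtain $m \in \mathbb{N}$ such that $m \lambda(x) < \lambda(w)$ (for every nonzero $\lambda$ finite on $S_{\ll\infty}$) implies $x <_s w$. Using compactness of $F(S)$ (\cite{ERS11}) and lower semi-continuity of the evaluations $\lambda \mapsto \sum_{n=1}^N \lambda(y_n)$, I inductively partition $\mathbb{N}$ into consecutive finite blocks $B_1, B_2, \ldots$ such that each $z_k := \sum_{n \in B_k} y_n \in S_{\ll\infty}$ satisfies $m \lambda(x) < \lambda(z_k)$ uniformly over such $\lambda$; iteration is possible because, for every nonzero $\lambda \neq \lambda_\infty$, removing a finite prefix from $(y_n)$ leaves the tail with infinite $\lambda$-value. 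Each $z_k$ satisfies $x <_s z_k$, so applying $\omega$-comparison via Lemma \ref{ReductionStepOmega} produces $x' \leq z_1 + \cdots + z_K$, a partial sum of $\sum_n y_n$.

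The remaining equivalences follow from the same tools. $(iii) \Rightarrow (iv)$ is just the case $k = 1$, while $(iv) \Rightarrow (iii)$ holds because, for every nonzero $\lambda \neq \lambda_\infty$, the value $\lambda(\sum_{n=k}^\infty y_n)$ differs from $\lambda(\sum_n y_n)$ only by the finite number $\sum_{n=1}^{k-1} \lambda(y_n)$ (and the case $\lambda = \lambda_\infty$ is automatic). For $(iv) \Rightarrow (ii)$: if $y_n <_s y_{n+1}$ with $y_n$ nonzero in $S_{\ll\infty}$, then every nonzero $\lambda$ has $\lambda(y_n) \geq \lambda(y_1) > 0$ (either strictly increasing in $[0,\infty)$ or identically $\infty$ by Lemma \ref{InfiniteMultiples}), hence $\lambda(\sum_n y_n) = \infty$ and $(iv)$ closes the argument. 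For $(ii) \Rightarrow (iv)$: I rebuild blocks $(z_k)$ as in $(i) \Rightarrow (iv)$, but with threshold $m_k \lambda(z_k)$ where $m_k$ is the constant from Lemma \ref{UglyWayOut} applied to $z_k \ll \infty$; this enforces $z_k <_s z_{k+1}$, and arranging the blocks to exhaust $\mathbb{N}$ gives $\sum_k z_k = \sum_n y_n$, so $(ii)$ yields $\sum_n y_n = \infty$.

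The main obstacle is the compactness step in $(i) \Rightarrow (iv)$ (also reused in $(ii) \Rightarrow (iv)$): I must convert the pointwise condition $\lambda(\sum_n y_n) = \infty$ (for each nonzero $\lambda$) into a uniform choice of $N$ for the truncation. I address this by working inside the compact set $\{\lambda \in F(S) : \lambda(x) \leq 1\}$, on which $m \lambda(x) \leq m$ is bounded; the increasing open cover $U_N = \{\lambda : \lambda(\sum_{n=1}^N y_n) > m\}$ covers the nonzero part of this compactum (since $\lambda(\sum_n y_n) = \infty$), and a finite subcover supplies a single $N$. The zero functional requires no attention since Lemma \ref{UglyWayOut} is vacuous there, and $\lambda_\infty$ trivially satisfies the desired inequality because $\lambda_\infty(z_k) = \infty$.
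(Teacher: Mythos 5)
Your overall strategy is the paper's: reduce to $y_j\ll\infty$ via Lemma \ref{ReductionStepOmega}, handle the degenerate case where $\lambda_\infty$ is the only functional via Lemma \ref{InfiniteMultiples}, regroup the $y_n$ into finite blocks $z_k$ with $x<_s z_k$ (resp.\ $z_k<_s z_{k+1}$) using Lemma \ref{UglyWayOut} together with compactness of $F(S)$, and pass between (iii) and (iv) by subtracting the finite value of a finite prefix. The only genuine difference is organizational: you use (iv) as the hub and prove (i)$\Leftrightarrow$(iv) directly, whereas the paper chains (i)$\Leftrightarrow$(ii)$\Leftrightarrow$(iii)$\Leftrightarrow$(iv); your direct argument for (iv)$\Rightarrow$(i) (faithfulness forces $\lambda(y_j)>\lambda(x)>0$, so the series of values diverges) is clean and slightly shorter than the paper's route through (ii).

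The one step where you supply your own argument rather than citing the paper's lemmas is the uniform choice of the truncation index $N$, and as written it does not work. You cover the \emph{nonzero} part of the compact set $\{\lambda: \lambda(x)\le 1\}$ by the open sets $U_N$ and extract a finite subcover; but the zero functional belongs to $\{\lambda:\lambda(x)\le 1\}$, lies in no $U_N$, and is in general a limit of nonzero functionals in that set (scale any fixed functional by $\epsilon\to 0$), so the set you are covering is not closed and compactness cannot be invoked. Your remark that the zero functional ``requires no attention'' addresses the conclusion of Lemma \ref{UglyWayOut}, not the subcover extraction, which is where the argument breaks. The fix is to avoid the explicit subcover altogether, as the paper implicitly does: apply Lemma \ref{UglyWayOut} directly to the tail $w_k:=\sum_{n\ge k}y_n$, which satisfies $\lambda(w_k)=\infty>m\lambda(x)$ for every nonzero functional finite on $S_{\ll\infty}$, to get $(j+1)x\le j\,w_k$ for some $j$; then, choosing $x'\ll x''\ll x$ and using that $w_k$ is the supremum of its partial sums together with $(j+1)x''\ll (j+1)x$, truncate to a finite block $z_k=\sum_{n=k}^{M}y_n$ with $x''<_s z_k$. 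This yields exactly the blocks you need, with the compactness of $F(S)$ confined to the (already proved) Lemma \ref{ComparisonFunctionals}. The same repair applies to your (ii)$\Rightarrow$(iv) block construction.
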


\begin{proof}
By Lemma \ref{ReductionStepOmega} we may assume in the definiton of $\omega$-comparison that $y_j\ll\infty$ for all $j$. Now note, that instead of the condition that $x<_{s}y_{j}\ll \infty$ for all $j$, one may assume that $y_j<_s y_{j+1}\ll\infty$ for all $j$. Indeed, use simplicity to find for given $\sigma:=\sum_{j=k}^l y_j$ some $m\in\mathbb{N}$ such that $\sigma \leq m\cdot x$. Then, $$\sigma=\sum_{j=k}^l y_j\leq m\cdot x<_s\sum_{j=l+1}^{l+m}y_j.$$ Using this and starting with $\sigma:=y_1=\tilde{y}_1$, by iteration, we find the desired sequence $(\tilde{y}_j)_j$ with $\tilde{y}_j<_s \tilde{y}_{j+1}\ll\infty , j\in\mathbb{N}$, and $\sum_{j=1}^\infty y_j = \sum_{j=1}^\infty \tilde{y}_j$.  One now shows that (i) implies (ii) in an analogous way to the proof of (i) implies (ii) of Proposition \ref{PropCFP}, and the implication from (ii) to (i) is easy.

To connect the statements (i) and (ii) to statements (iii) and (iv) consider first the case that  there is $x\in S_{\ll\infty}$ with $\lambda(x)=\infty$ for all $\lambda\in F(S)$. Then, by Lemma \ref{InfiniteMultiples}, $\lambda=\lambda_\infty$ and every element is stably infinite. In this case, the conditions on the $y_n$ in (ii), (iii), and (iv) all just reduce to the condition that $y_n\neq 0$ for every $n$. Hence (ii) through (iv) are trivially equivalent in this case. We may therefore assume in what follows that there are functionals finite on $S_{\ll\infty}$.

To see that (iii) implies (ii), we use that $y_j<_s y_{j+1}$ implies that $\lambda(y_j)< \lambda(y_{j+1})$ for all functionals $\lambda$ finite on $S_{\ll\infty}$. The converse is shown by induction and by using Lemma \ref{UglyWayOut} as follows: If $y_j'=\sum_{k=a}^{b}y_j$ has been determined, then find $m$ as in Lemma \ref{UglyWayOut} for $y'_j$, and then find $c\in \mathbb{N}$ such that $m\cdot \lambda(y'_j)<\sum_{j=b+1}^c \lambda(y_j)$. Set $y'_{j+1}:=\sum_{j=b+1}^c y_j$.

Finally, to see that (iii) and (iv) are equivalent is easy as we have already reduced to the case that $\lambda(y_j)<\infty$ for all $j$.
\end{proof}

\begin{remark}
The assumption that all elements $y_j$ in {\rm(iv)} are nonzero is necessary (cf. Example \ref{Ex:Omega-noBeta}). 
\end{remark}

We will now define a new regularity property on comparison in an ordered abelian semigroup $W$ based on the value $\beta(x,y)$ for $x,y\in W$ as defined in Section \ref{SectionBeta}. Note that $\beta(x,y)=0$ if and only if for every $\epsilon>0$ there is $k\in \mathbb{N}$ such that $kx\leq \lfloor k\epsilon \rfloor y$, where $\lfloor a \rfloor$ denotes the largest smaller integer than $a$.

\begin{definition} ($\beta$-comparison)
Let $W$ be an ordered abelian semigroup.
We will say that $W$ satisfies the $\beta$-comparison property if whenever $x,y$ are two elements in $W$ with $\beta(x,y)=0$, then $x\leq y$.
\end{definition}

\begin{lemma}\label{BetaLemma}
Let $S$ be a simple $\Cu$-semigroup. Then the following statements are equivalent.
\begin{itemize}
\item[(i)] $S$ has $\beta$-comparison.
\item[(ii)] Whenever $\beta(x,y)=0$ for some non-zero $x$, then $y=\infty$.
\end{itemize}
\end{lemma}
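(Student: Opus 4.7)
The plan is to dispatch the easy direction quickly and focus on the substantive one. For (ii)$\Rightarrow$(i), assume (ii) and suppose $\beta(x,y)=0$. If $x=0$ then $x\leq y$ is immediate; if $x\neq 0$ then (ii) forces $y=\infty$, so again $x\leq y$. Hence $S$ has $\beta$-comparison.

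For (i)$\Rightarrow$(ii), assume $\beta$-comparison and let $x$ be nonzero with $\beta(x,y)=0$. By simplicity, the nonzero element $x$ is full, so $y\leq mx$ for some $m\in\mathbb{N}$, placing us in the hypothesis of Lemma \ref{prop1}. That lemma then produces $k\in\mathbb{N}$ such that $ky$ is properly infinite.

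The decisive step is to upgrade ``$ky$ properly infinite'' to the equality $ky=\infty$. Properly infinite means $n(ky)\leq ky$ for every $n$, hence $\sup_n n(ky)\leq ky$. In any simple $\Cu$-semigroup, $\sup_n nz=\infty$ for every nonzero $z$: by simplicity each element of $S_{\ll\infty}$ is dominated by some multiple of $z$, and by (O2) $\infty$ itself is the supremum of a rapidly increasing sequence in $S_{\ll\infty}$. Applying this with $z=ky$ (nonzero since $y$ is) gives $\infty\leq ky$, so $ky=\infty$.

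Once $ky=\infty$, for any $x'\ll\infty$ and any $m\in\mathbb{N}$ we have $mx'\leq m\infty=\infty=ky$, so $\beta(x',y)\leq k/m$, forcing $\beta(x',y)=0$. The $\beta$-comparison hypothesis then yields $x'\leq y$ for every $x'\ll\infty$; taking the supremum over a rapidly increasing sequence with limit $\infty$ (again by (O2)) we conclude $\infty\leq y$, hence $y=\infty$. No serious obstacle is expected: the argument is essentially a chain of definitions together with one application of Lemma \ref{prop1}. The only mildly non-obvious moves are the passage from ``properly infinite'' to ``$=\infty$'' in a simple $\Cu$-semigroup, and the realisation that to finish we must apply $\beta$-comparison to arbitrary $x'\ll\infty$ rather than only to the given $x$.
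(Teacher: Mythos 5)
Your (ii)$\Rightarrow$(i) direction is fine, and the last portion of your argument (upgrading ``$ky$ properly infinite'' to $ky=\infty$, and then applying $\beta$-comparison to every $x'\ll\infty$ and taking suprema) is also sound. The gap is at the very first step of (i)$\Rightarrow$(ii): the claim that simplicity yields $y\leq mx$ for some $m\in\mathbb{N}$. Fullness of $x$ only gives $y'\leq mx$ for elements $y'\ll y$, with $m$ depending on $y'$; it does not bound $y$ itself by a single multiple of $x$. The claim is false even in simple $\Cu$-semigroups that satisfy hypothesis (i): in $S=[0,\infty]$ take $x=1$ and $y=\infty$, so that $\beta(x,y)=0$ and $x\neq 0$, yet $y\not\leq mx$ for every $m$. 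Consequently you are not in the hypotheses of Lemma \ref{prop1}, and the conclusion you extract from it --- that some multiple of $y$ is properly infinite --- is exactly the implication that Example \ref{ExampleBetaVsQQ} shows can fail: there one has $\beta(x,y)=0$ for a nonzero $x$ while $y$ is stably finite. Since your derivation of ``some multiple of $y$ is properly infinite'' never invokes the $\beta$-comparison hypothesis, it cannot be repaired as stated.

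The intended argument is shorter and bypasses proper infiniteness altogether: from $kx\leq ly$ one gets $(k)(nx)\leq (nl)y$, so $\beta(x,y)=0$ forces $\beta(nx,y)=0$ for every $n$; $\beta$-comparison applied to the pairs $(nx,y)$ then gives $nx\leq y$ for all $n$, and since $x\neq 0$ and $S$ is simple, $\sup_n nx=\infty$, whence $y=\infty$. If you wish to keep your two-stage structure, you would first have to establish $y=\infty$ by some such means before you could conclude that any multiple of $y$ is infinite.
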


\begin{proof}
Suppose $\beta(x,y)=0$ for some non-zero $x$, which implies $\beta(n\cdot x,y)=0$ for each $n\in\mathbb{N}$ by a simple computation.  Letting $x'\ll x$, we have $\beta(n\cdot x',y)=0$ by Lemma \ref{properties}.  Now (i) implies that $n\cdot x\leq y$ for all $n\in \mathbb{N}$, hence $y=\infty$ by simplicity. The other implication is trivial.
\end{proof}

The following lemma characterizes $\beta$-comparison using functionals and states.

\begin{lemma}\label{BetaAndFunctionals}
Let $S$ be a simple $\Cu$-semigroup. Then the following statements are equivalent for an element $y\in S$:
\begin{itemize}
\item[(i)] There is some nonzero $x\in S$ such that $\beta(x,y)=0$.
\item[(ii)] There is no faithful state $f\in \mathcal{S}(S,y)$.
\item[(iii)] $\lambda(y)=\infty$ for all functionals $\lambda\in F(S)$.
\end{itemize}
\end{lemma}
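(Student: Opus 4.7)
The equivalence (ii) $\Leftrightarrow$ (iii) is precisely Lemma \ref{FaithfulStates}, so the task reduces to linking (i) with (iii).

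For (i) $\Rightarrow$ (iii) I would argue by contraposition. Suppose some nonzero functional $\lambda \in F(S)$ satisfies $\lambda(y) < \infty$. Simplicity of $S$ guarantees that $\lambda$ is faithful, so $\lambda(y) > 0$, and the rescaling $g := \lambda/\lambda(y)$ is a faithful state normalized at $y$. For any nonzero $x \in S$ with $x \propto y$ (which is necessary for $\beta(x, y)$ to be defined), faithfulness of $g$ gives $g(x) > 0$, and Proposition \ref{equality} together with the remark following it yields $\beta(x, y) \geq g(x) > 0$. Hence no nonzero $x$ can satisfy $\beta(x, y) = 0$.

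For (iii) $\Rightarrow$ (i) I would exhibit an explicit witness: any nonzero $x \in S_{\ll \infty}$. Such an element exists because, by (O2), any nonzero element of $S$ is the supremum of a rapidly increasing sequence whose terms lie in $S_{\ll\infty}$, at least one of which must be nonzero. Since $S$ is simple and $x \ll \infty = \sup_n n\cdot y$, one has $x \leq n\cdot y$ for some $n$, so $x \in W_y$. I would then split into two subcases. If $\mathcal{S}(S, y) = \emptyset$, Lemma \ref{NoStates} supplies $n \in \mathbb{N}$ with $n\cdot y$ properly infinite; in a simple $\Cu$-semigroup this forces $n\cdot y = \infty$, so $k\cdot x \leq \infty = n\cdot y$ for every $k$, and thus $\beta(x, y) \leq n/k$ for all $k$, hence $\beta(x, y) = 0$. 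If $\mathcal{S}(S, y) \neq \emptyset$, then for any state $f \in \mathcal{S}(S, y)$ the map $\tilde{f}(z) := \sup_{z' \ll z} f(z')$ is a functional on $S$ by \cite{Robert13} satisfying $\tilde{f}(y) \leq f(y) = 1 < \infty$. Hypothesis (iii) then forces $\tilde{f}$ to be the zero functional, and a short argument using (O2) shows that $f$ must vanish on every element of $S_{\ll\infty}$, in particular on $x$. Since $f$ was arbitrary, Proposition \ref{equality} now yields $\beta(x, y) = \sup_f f(x) = 0$.

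The main obstacle lies in the second subcase of (iii) $\Rightarrow$ (i), where the functional-level hypothesis must be translated into a statement about every state normalized at $y$. The decisive step is to pass from an arbitrary such state $f$ to the induced functional $\tilde{f}$, use the bound $\tilde{f}(y) \leq 1$ together with (iii) to force $\tilde{f} \equiv 0$, and then to extract from the vanishing of $\sup_{z' \ll z} f(z')$ the stronger conclusion that $f$ itself vanishes on all of $S_{\ll\infty}$.
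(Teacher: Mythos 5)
Your proof is correct and follows essentially the same route as the paper's: both dispose of (ii)$\Leftrightarrow$(iii) via Lemma \ref{FaithfulStates}, split on whether $\mathcal{S}(S,y)$ is empty (using Lemma \ref{NoStates} and $n\cdot y=\infty$ by simplicity in the empty case), and otherwise invoke Proposition \ref{equality} to identify $\beta(x,y)$ with $\sup_f f(x)$. The only cosmetic difference is that you connect (i) to (iii) through the induced functional $\tilde f$, whereas the paper connects (i) to (ii) directly through states; the ingredients are identical.
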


\begin{proof}
By Lemma \ref{FaithfulStates}, (ii) and (iii) are equivalent.

Suppose that the set of states $\mathcal{S}(S,y)$ normalized at $y$ is empty. Then, by Lemma \ref{NoStates}, there is $n\in \mathbb{N}$ such that $n\cdot y$ is properly infinite; hence, $n\cdot y=\infty$. In this case, $\beta(x,y)=0$ and $\lambda(y)=\infty$ for all functionals $\lambda$, so all three statements hold. 

Otherwise, there is a state $f \in \mathcal{S}(S,y)$ and $\beta(x,y)=\sup\{f(x)\ |\ f\in \mathcal{S}(S,y)\}.$ Thus, (i) and (ii) are also equivalent in this case. 
\end{proof}

With respect to Lemma \ref{BetaAndFunctionals}, one may wonder about the existence of a simple $\Cu$-semigroup $S$ with a {\bf stably finite} element $y\in S$ such that $\beta(x,y)=0$ for some nonzero $x$ (equivalently, $\lambda(y)=\infty$ for all $\lambda\in F(S)$). This question on existence has a positive answer, which is explained based on the C*-algebra constructed in \cite{Petzka13}:

\begin{example}\label{ExampleBetaVsQQ}
In \cite{Petzka13}, the second author constructed a stably finite projection $Q$ in the multiplier algebra of a separable stable simple C*-algebra $A$, which is of the form $Q=\sum_{j=1}^\infty {p_j}$, where the $p_j$ are pairwise orthogonal projections in $A$ and such that $\lambda(p_j)=\lambda(p_i)$ for all $i,j$ and all functionals $\lambda$. 
  
  Considering $a:=\sum_{j=1}^\infty \frac{1}{2^j} p_j$, we find a positive element $a\in A$ such that its Cuntz class, $y=\langle a \rangle$, satisfies that $\lambda(y)=\infty$ for all $\lambda$, hence $\beta(x,y)=0$. We will show that $y$ is stably finite. 
  
Indeed, that the multiplier projection $Q$ is stably finite is shown in \cite{Petzka13} by the existence of projections $g_n$ such that for each $n$ we have $g_n\npreceq n\cdot Q$. Assuming $n\cdot y=\infty$, we get in particular that $g_n\preceq n\cdot a$. By compactness of $\langle g_n\rangle$, there is some $N(n)\in\mathbb{N}$ such that $g_n\preceq n\cdot \sum_{j=1}^{N(n)} \frac{1}{2^j} p_j\sim \sum_{j=1}^{N(n)} p_j$. Now, the Cuntz subequivalence is just Murray-von Neumann subequivalence of projections; hence, $g_n\preceq \sum_{j=1}^{N(n)} p_j<Q$, a contradiction. It follows that $n\cdot y<\infty$ for all $n$ and $y$ is stably finite.
\end{example}

Our goal is now to relate $\beta$-comparison to $\omega$-comparison.

\begin{proposition} \label{BetaThenOmega}
 Let $S$ be $\Cu$-semigroup. If $S$ has $\beta$-comparison, then $S$ has $\omega$-comparison.
\end{proposition}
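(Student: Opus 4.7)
The plan is to reduce $\omega$-comparison to a single application of $\beta$-comparison at the infinite sum. Given $x', x, y_0, y_1, y_2, \ldots$ in $S$ with $x <_s y_j$ for every $j$ and $x' \ll x$, form the supremum $Y := \sum_{j=0}^{\infty} y_j$, which exists by axiom (O1) (together with (O4), which guarantees that this supremum agrees with the usual infinite-sum notation).

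Next I would use Remark \ref{almost_zero}, applied to the sequence $(y_j)$ (after a harmless re-indexing): since $x <_s y_j$ for every $j$, one obtains
\[
\beta\bigl(x,\,y_0+y_1+\cdots+y_{k-1}\bigr) \;\leq\; \frac{1}{k}
\]
for each $k \in \mathbb{N}$. Because $y_0 + \cdots + y_{k-1} \leq Y$, Lemma \ref{properties}(i) gives $\beta(x,Y) \leq \beta(x, y_0+\cdots+y_{k-1}) \leq 1/k$ for all $k$, so $\beta(x,Y)=0$. (A brief check: $\beta(x,Y)$ is defined because $x<_s y_0$ implies $x \propto y_0 \leq Y$.) Invoking $\beta$-comparison, we conclude $x \leq Y = \sup_n \bigl(y_0+\cdots+y_n\bigr)$.

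Finally, since $x' \ll x$ and the partial sums form an increasing sequence with supremum $Y$, the definition of the relation $\ll$ produces some $n$ with $x' \leq y_0 + y_1 + \cdots + y_n$, which is precisely $\omega$-comparison.

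I do not anticipate a serious obstacle: the only subtlety is making sure Remark \ref{almost_zero} is used with the correct indexing and that $\beta(x,Y)$ is well-defined (both are immediate from $x <_s y_0$), after which Lemma \ref{properties}(i), the hypothesis of $\beta$-comparison, and axiom (O1) do all the work in sequence.
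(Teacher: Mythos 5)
Your argument is correct and is essentially the paper's own proof run in the direct rather than contrapositive direction: both rest on Remark \ref{almost_zero} combined with Lemma \ref{properties}(i) to force $\beta(\cdot, \sup_n(y_0+\cdots+y_n))=0$, then invoke $\beta$-comparison and the definition of $\ll$ to extract a finite partial sum dominating $x'$. The only cosmetic difference is that the paper phrases it as a failure of $\beta$-comparison witnessed by some $x'\ll x$, while you apply $\beta$-comparison directly to $x$; both are valid.
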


\begin{proof}
Assume that $S$ does not satisfy $\omega$-comparison. Then, there exists a sequence $\{y_{n}\}$, and $x,x'$ in $S$ such that $x'\ll x<_{s}y_{j}$ for all $j$ and $x'\not\leq y_{1}+y_{2}+\ldots y_{k}$ for any $k$. Let $y:=\sup_{n} (\sum^{n}_{j=1} y_{j})$ and notice that $x\not\leq y$. 

We have that $x\leq \infty\cdot  y_j \leq \infty \cdot y$ for all $j$, so $x\, \overline{\propto}\, y$. By Lemma \ref{properties},
$\beta(x',y)\leq \beta(x',\sum^{k}_{j=1} y_{j})$ for any $k$. Using Remark \ref{almost_zero}, one gets $\beta(x',y)\leq 1/k$ for all $k$.
Letting $k$ go to infinity, $\beta(x',y)=0$. Hence $\beta(x',y)=0$ for all $x'\ll x$, but $x\nleq y$, so $S$ does not satisfy $\beta$-comparison.
\end{proof}

The converse is not true for general (simple) $\Cu$-semigroups, but it is true for simple $\Cu$-semigroup satisfying the additional axioms (O5) and (O6) and containing no minimal element, which is the content of the following example and proposition.

\begin{example}\label{Ex:Omega-noBeta}
There exists a simple $\Cu$-semigroup $S$, such that $S$ has $\omega$-comparison, but no $\beta$-comparison.
\end{example}

\begin{proof}
Let $$S=\{0\}\cup \{1\}\cup\{\infty\}$$ with $1+1=\infty$ and $1$ compact. If one wants to exclude minimal (necessarily compact) elements in a $\Cu$-semigroup, then one can similarly consider $$\tilde{S}=\{0\}\cup (1,2]\cup\{\infty\}$$ with $x+y=\infty$ for any nonzero $x,y\in\tilde{S}$. Both semigroups are given the order inherited  from $\mathbb{R}$. Since the endpoint at 1 is not included in $\tilde{S}$, every element can be written as the supremum of a rapidly (i.e. strictly) increasing sequence. 

One checks that $S$ and $\tilde{S}$ are both simple satisfying the axioms (O1)--(O4). Moreover, note that $S$ satisfies (O5) and (O6). However, the order in $\tilde{S}$ is not almost algebraic, so (O5) fails in $\tilde{S}$.

We show that $S$ and $\tilde{S}$ have $\omega$-comparison, but no $\beta$-comparison. For the failure of $\beta$-comparison, note that $2x=\infty$ for every non-zero $x$ in $S$ and $\tilde{S}$. Thus, we have that $\beta(x,y)=0$ for arbitrary nonzero $x,y$. But in $S$ we have $1\neq \infty$, and in $\tilde{S}$ we have $3/2\neq\infty$. 

On the other hand, $S$ and $\tilde{S}$ both have $\omega$-comparison. To verify this, fix some nonzero $x$ in $S$ or $\tilde{S}$. Then for any sequence $(y_j)$ so that $x<_s y_j$ for all $j$, the $y_j$'s are necessarily non-zero, so $\sum_{j=1}^\infty y_j=\infty\geq x$.
\end{proof}

In the previous example, instead of the semigroup $S$, we could have considered more generally the semigroup $S_n=\{0,1,2,\ldots,n,\infty\}$ for some $n\in\mathbb{N}$, equipped with the natural order and the natural addition except that $x+y=\infty$ whenever the sum of $x$ and $y$ exceeds $n$ in $\mathbb{R}$.  These $\Cu$-semigroups were also studied in \cite{APT14}. The following proposition states that, in the class of simple $\Cu$-semigroups satisfying (O1)--(O6), these are the only semigroups that distinguish $\omega$-comparison from $\beta$-comparison.

\begin{proposition}\label{BetaVsOmega}
 Let $S$ be a simple $\Cu$-semigroup satisfying axioms (O1)--(O6) and $S\neq S_n$ for any $n\in\mathbb{N}$. If $S$ has $\omega$-comparison, then $S$ has $\beta$-comparison (and hence  they are equivalent by Proposition \ref{BetaThenOmega}).
\end{proposition}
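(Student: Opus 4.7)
My plan is to argue the contrapositive: suppose $\beta$-comparison fails in $S$ and produce a witness that $\omega$-comparison also fails. Failure of $\beta$-comparison, via Lemma \ref{BetaLemma} together with Lemma \ref{BetaAndFunctionals}, yields an element $y\in S$ with $y\neq\infty$ and $\lambda(y)=\infty$ for every $\lambda\in F(S)$. By the reformulation of $\omega$-comparison in Proposition \ref{Omega}(iv), it suffices to exhibit a sequence $(y_n)$ of nonzero elements in $S_{\ll\infty}$ with $\sum y_n\neq\infty$ but $\lambda(\sum y_n)=\infty$ for all $\lambda$. The cleanest route is to realize $y$ itself as such an infinite sum $y=\sum_{n=1}^\infty y_n$ with each $y_n$ nonzero and $y_n\ll y$: then each $y_n\in S_{\ll\infty}$ automatically, $\sum y_n=y\neq\infty$, and $\lambda(\sum y_n)=\lambda(y)=\infty$ for free.

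The hypothesis $S\neq S_n$ enters precisely to rule out a minimal nonzero element of $S$. Indeed, were $u\in S$ minimal nonzero, it would be compact by (O2), and by iterating (O6) together with simplicity one shows that every nonzero $z\in S$ dominates $u$: writing $kz=z+(k-1)z$ and applying (O6) to $u\ll u\leq kz$, minimality forces by induction on $k$ the conclusion $u\leq z$. Then (O5) applied to $u\ll u\leq z$ gives $z=u+z'$ for every nonzero $z$, and iterating forces $S$ to be one of $S_n$ (for some finite $n$) or $S_\infty=\{0,u,2u,\ldots,\infty\}$. A direct check shows $\beta$-comparison holds in $S_\infty$ (the only pairs with $\beta(x,y)=0$ have $x=0$ or $y=\infty$), contradicting the standing failure of $\beta$-comparison. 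Since also $S\neq S_n$ by hypothesis, $S$ cannot have a minimal nonzero element.

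The remaining task is the decomposition of $y$. Using (O2), pick a rapidly increasing sequence $y_1'\ll y_2'\ll\cdots$ of nonzero elements with supremum $y$. After inserting intermediate refinements $y_n'\ll y_n''\ll y_{n+1}'$, axiom (O5) applied to $y_n'\ll y_n''\leq y_{n+1}'$ yields elements $z_n$ with $y_n'+z_n\leq y_{n+1}'\leq y_n''+z_n$, and the absence of a minimal element lets us arrange each $z_n$ nonzero. Setting $y_1:=y_1'$ and $y_{n+1}:=z_n$, the first inequality telescopes to $\sum_{k=1}^{N+1}y_k\leq y_{N+1}'$, yielding $\sum y_k\leq y$. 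The main obstacle is to upgrade this one-sided bound to the honest equality $\sum y_k=y$: this requires combining the right-hand inequalities $y_{n+1}'\leq y_n''+z_n$ with (O4) (compatibility of suprema with addition) and possibly (O6) (to match decomposed pieces) so that taking the supremum over $N$ of the partial sums recovers $y$ exactly. Once this equality holds, $(y_k)$ gives a sequence of nonzero elements of $S_{\ll\infty}$ with $\sum y_k=y\neq\infty$ and $\lambda(\sum y_k)=\lambda(y)=\infty$ for every $\lambda$, directly violating Proposition \ref{Omega}(iv) and delivering the desired contradiction.
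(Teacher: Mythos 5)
Your overall strategy --- reduce to Proposition \ref{Omega}(iv) and produce a sequence of nonzero elements of $S_{\ll\infty}$ whose sum is $\neq\infty$ but infinite under every functional --- is sound, but the step you yourself flag as ``the main obstacle'' is a genuine gap, not a routine verification. The telescoping of $y_n'+z_n\leq y_{n+1}'$ gives only $\sum_k y_k\leq y$, and the reverse inequality cannot be recovered from $y_{n+1}'\leq y_n''+z_n$: the partial sum $y_1'+z_1+\cdots+z_{n-1}$ is bounded \emph{above} by $y_n'\leq y_n''$, so the term $y_n''$ appearing in your upper bound is not controlled by the partial sums, and no amount of (O4)/(O6) bookkeeping turns $y_{n+1}'\leq y_n''+z_n$ into $y_{n+1}'\leq\sum_{k\leq n+1}y_k$. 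Moreover, an exact decomposition $y=\sum_n y_n$ into nonzero pieces need not exist at all in a simple $\Cu$-semigroup satisfying (O1)--(O6) with $S\neq S_n$ (take $y=1$ in $\overline{\mathbb{N}}=\{0,1,2,\ldots,\infty\}$), so such a decomposition would have to be extracted from the failure of $\beta$-comparison and the absence of minimal elements, which you do not do. Even the best available telescoping, $y\leq y_2'+\sum_n z_n$, only bounds the relevant sum above by a multiple of $y$, which may equal $\infty$ even though $y\neq\infty$, so the hypothesis $\sum y_n\neq\infty$ of \ref{Omega}(iv) is also in jeopardy.

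The paper's proof avoids asking for $\sum z_i=y$ altogether. Using compactness of $F(S)$ it passes to a subsequence of the rapidly increasing sequence with $\lambda(y_i)+i\leq\lambda(y_{i+1})$ uniformly over all functionals; the (O5)-elements $z_i$ with $y_i+z_i\leq y_{i+2}\leq y_{i+1}+z_i$ then satisfy $\mu(z_i)\geq\mu(y_{i+2})-\mu(y_{i+1})\geq i+1$ for every functional $\mu$ finite on $S_{\ll\infty}$, whence $\lambda\bigl(\sum_{i\geq k}z_{2i+1}\bigr)=\infty$ for \emph{all} $\lambda$ and all $k$, while the one-sided telescoping you do have gives $\sum_i z_{2i+1}\leq y$. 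Proposition \ref{Omega}(iii) then forces $\sum_i z_{2i+1}=\infty$, hence $y=\infty$. The case in which the only functional is $\lambda_\infty$ is treated separately via Glimm halving (this is where $S\neq S_n$ enters, via Robert's dichotomy rather than your ad hoc minimal-element analysis), and there one indeed only needs $\sum z_j\leq y$, as in your sketch. To repair your argument you would need to import this quantitative functional estimate; the exact-sum decomposition is not the right tool.
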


\begin{proof}
Let $y\in S$ such that $\beta(x,y)=0$ for some nonzero $x$. Then, by Lemma \ref{BetaAndFunctionals} we have that $\lambda(y)=\infty$ for all functionals $\lambda\in F(S)$. Pick a rapidly increasing sequence $(y_i)$ of nonzero elements with supremum $y$. 

As $\lambda(y)=\infty$ and $\lambda$ preserves suprema, we can (after possibly changing to a subsequence) assume that $\lambda(y_i)+i\leq \lambda(y_{i+1})$ for all $i$ and all $\lambda$. (Here we are using compactness of $F(S)$, which was shown in \cite[Theorem 4.8]{ERS11}). Using (O5), we can find for each $i$ some $z_i\in S$ such that $y_i+z_i\leq y_{i+2}\leq y_{i+1}+z_i$. 

We will distinguish between the case that $\lambda(y_i)=\infty$ for all $\lambda\in F(S)$ (for some and hence all (nonzero) $y_i$, c.f. Lemma \ref{InfiniteMultiples}) and the case that there is some functional $\mu$ with $\mu(y_i)<\infty$ for all $i$. 

First suppose the latter. Then $\mu(z_i)\geq \mu(y_{i+2})-\mu(y_{i+1})\geq i+1$ for such a functional finite on $S_{\ll\infty}$. It follows in particular that $z_i$ is nonzero. If now $\nu$ is a functional, then either $\nu$ is infinite on $S_{\ll\infty}$ and hence on all of $S\setminus\{0\}$, or $\nu$ is finite on each $y_j$ and $\nu(z_i)\geq i+1$. In particular, we get for all functionals $\lambda$ that $\lambda \left (\sum_{i=k}^\infty z_{2i+1}\right )=\infty$ for all $k$. Now, $\omega$-comparison implies that $\sum_{i=1}^\infty z_{2i+1}=\infty$ (by Proposition \ref{Omega}). But, for any $N\in \mathbb{N}$, we have $$\sum_{i=1}^{2N+1} z_{2i+1}\leq y_1+z_1+z_3+z_5+\ldots+z_{2N+1} \leq y_3+z_3+z_5+\ldots+z_{2N+1}\leq\ldots\leq y_{2N+1}.$$
Hence $$\infty=\sum_{i=1}^\infty z_{2i+1}\leq \sup_i y_i=y.$$

In the other case, in which the only functional on $S$ is the trivial functional $\lambda_\infty$ assigning $\infty$ to all nonzero $z\in S$, we only need to find for each $y\in S$ a sequence of nonzero $z_j$'s such that $\sum_{j=1}^\infty z_j\leq y$. Then, an application of $\omega$-comparison on the sequence $(z_j)_j$ yields the desired conclusion that $y=\infty$. That such a sequence can be found in the simple $\Cu$-semigroup $S\neq S_n$ satisfying axioms (O1)--(O6) follows from the fact that these semigroups have the Glimm halving property (\cite{Robert13}) (i.e., for every nonzero $x\in S$ there is some nonzero $z\in S$ such that $2z\leq x$).
\end{proof}

In \cite[Proposition 2.17]{OPR2} it is proved that if a complete abelian ordered semigroup satisfies $\omega$-comparison, then it also satisfies the corona factorization property. In the same paper it was left unanswered whether the converse holds. The example below shows that both properties are not equivalent for $\Cu$-semigroups, and hence neither for general complete abelian ordered semigroups. It remains an open question whether the two notions are equivalent for $\Cu$-semigroups $S$ coming from a C*-algebra, i.e., for $S=\Cu(A)$ for some C*-algebra $A$.

\begin{example}\label{Ex:CFP-noW}
There exists a simple $\Cu$-semigroup that satisfies the corona factorization property but does not have the $\omega$-comparison property.
\end{example}
\begin{proof}
Let $$S=[0,1]\cup\{\infty\}$$ equipped with the usual order 
and addition, except that if $x,y\in S$ are such that $x+y> 1$ in $\mathbb{R}$, we set $x+y=\infty$. 

It is easy to check that $S$ is simple, totally ordered and that it satisfies the (strong) corona factorization property.

On the other hand, let us check that $S$ does not satisfy $\omega$-comparison. To do so, firstly note that, for $0\leq x,y\leq 1$ in $S$, $x\ll y$ if and only if $x<y$, and that $\infty\ll\infty$. Now, consider the sequence $\{y_{n}\}=\{1/2^{n+1}\}$ and the elements $x=1\,, x'=3/4$. Clearly $x'\ll x$ and we get $x<_{s} y_{j}$ for all $j$, since in fact $x<_sy$ holds for arbitrary $x,y$ in $S$.

But as $\sum^{\infty}_{j=1}y_{j}=1/2 \ngeq  3/4$, we conclude
that $S$ does not satisfy the $\omega$-comparison property. 
\end{proof}

%
%

There is also a stably finite $\Cu$-semigroup distuingishing $\omega$-comparison and the CFP.

\begin{example}\label{Ex:stablyNoFunctionals}
 Let $$S=\{ (0,0)\} \cup ((0,1]\cup\{\infty\})\times (0,\infty] ,$$
with addition defined by componentwise addition and with the additional condition that if in the first component we have $x+y>1$ , then $x+y=\infty$. Namely, $(x,r)+(y,s)=(\infty,r+s)$ whenever $x+y>1$ in $\mathbb{R}$. The order is componentwise with the natural ordering in each component. Note that the relation of compact containment is given by componentwise strict inequalities and with $(\infty,r)\ll(\infty,s)$ whenever $r< s$.
 
It is easy to check that $S$ is simple and satisfies all axioms {\rm (O1)--(O6)}. Note that all elements except those of the form $(x,\infty)$ are stably finite. This makes $S$ a simple stably finite $\Cu$-semigroup. 

If $\lambda$ is a nonzero functional on $S$, then $\lambda((x,\infty))=\infty$ for all $\lambda\in F(S)$. Indeed, by construction, there exists $m\in\mathbb{N}$ such that $m(x,\infty)=\infty$, so $m\cdot \lambda((x,\infty))= \lambda((\infty,\infty))=\infty$.  Therefore, by Proposition \ref{BetaAndFunctionals}, there is some nonzero $(x,r)\in S$ such that $\beta((x,r),(y,\infty))=0$, while $(y,\infty)$ is stably finite. Hence, $S$ does not have $\beta$-comparison, and neither does $S$ satisfy $\omega$-comparison by Proposition \ref{BetaVsOmega}.
On the other hand, it is easy to see with Proposition \ref{PropCFP} that $S$ satisfies the (strong) corona factorization property.
 \end{example}

\begin{remark}
One can even find an algebraic, simple, stably finite $\Cu$-semigroup with the CFP and failing $\omega$-comparison by a small modification of the previous example. Take 
$$S=\{ (0,0)\} \cup (((\mathbb{Q}\cap(0,1])\sqcup (0,1])\cup\{\infty\})\times ((\mathbb{Q}\cap(0,\infty))\sqcup (0,\infty])\,$$
with addition and order similar to before, but now addition and order in each component are defined as in the Cuntz semigroup of the universal UHF-algebra (see e.g. \cite{BPT}). By construction, the semigroup $S$ is algebraic, and one shows the required properties of $S$ in the same way as above.
\end{remark}

Note that the simple $\Cu$-semigroup in Example \ref{Ex:CFP-noW} is neither stably finite nor purely infinite. This behavior is in general ruled out by the property of $\beta$-comparison.

\begin{proposition}\label{betaImpliesDichotomy}
Let $S$ be a simple $\Cu$-semigroup with $\beta$-comparison. Then $S$ is either stably finite or purely infinite.
\end{proposition}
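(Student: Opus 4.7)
The plan is to show the contrapositive of ``stably finite or purely infinite'': assuming $S$ is not stably finite, we produce a properly infinite element with $a\ll\infty$ and then exploit $\beta$-comparison (via Lemma \ref{BetaLemma}) to collapse every nonzero element of $S$ to $\infty$.

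First, I would unpack the hypothesis. If $S$ is not stably finite, then by definition there exists $a\in S$ with $a\ll\infty$ that is infinite, i.e., there is a nonzero $b\in S$ with $a+b\leq a$. A routine induction on $k$ gives $a+kb\leq a$ for every $k\in\mathbb{N}$. Now I invoke simplicity of $S$: since $b\neq 0$ and $S$ is simple, $a\propto b$, so $a\leq nb$ for some $n\in\mathbb{N}$. Substituting into the previous inequality at $k=n$ yields $2a\leq a+nb\leq a$, so $a$ is properly infinite. In particular $ka\leq a$ for all $k\in\mathbb{N}$, and $a\neq 0$ since it is infinite.

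Next I would show that, with such an $a$ in hand, every nonzero $y\in S$ satisfies $\beta(a,y)=0$. By simplicity $a\leq my$ for some $m\in\mathbb{N}$, and combining with $ka\leq a$ gives
\[
ka\leq a\leq my \qquad \text{for every } k\in\mathbb{N}.
\]
Hence $\beta(a,y)\leq m/k$ for every $k$, which forces $\beta(a,y)=0$. Since $a$ is nonzero and $S$ has $\beta$-comparison, Lemma \ref{BetaLemma} yields $y=\infty$. Because $y$ was an arbitrary nonzero element, $S=\{0,\infty\}$, i.e., $S$ is purely infinite.

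There is no real obstacle; the only subtle point is the passage from ``$a$ is infinite'' to ``$a$ is properly infinite'', which is why simplicity is used twice (once to promote $a$ to a properly infinite element, once to dominate an arbitrary $y$ by a multiple of $a$). Alternatively, one can phrase the second half functionally: proper infiniteness of $a\in S_{\ll\infty}$ forces $\lambda(a)=\infty$ for every $\lambda\in F(S)$, so Lemma \ref{InfiniteMultiples} gives $F(S)=\{\lambda_\infty\}$, and then Lemma \ref{BetaAndFunctionals} together with Lemma \ref{BetaLemma} again collapses every nonzero element to $\infty$. I would present the direct $\beta$-computation since it avoids invoking the functional machinery.
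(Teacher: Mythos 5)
Your proof is correct and follows essentially the same route as the paper: both extract from the failure of stable finiteness a properly infinite element (using simplicity to pass from infinite to properly infinite), deduce that $\beta(a,y)=0$ for every nonzero $y$, and conclude via Lemma \ref{BetaLemma}. The only difference is that you compute $\beta(a,y)=0$ directly from $ka\leq a\leq my$, where the paper delegates this to Lemma \ref{prop1}; your version also spells out the ``infinite implies properly infinite'' step that the paper leaves implicit.
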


\begin{proof}
Suppose $S$ is not stably finite, so that there is some $x\ll\infty$ which is not finite. In other words, $\infty$ is compact, and for every nonzero $y\in S$ some finite multiple of it is properly infinite.  By Lemma \ref{prop1}, $\beta(x,y)=0$ for every nonzero $x,y$. By $\beta$-comparison, every nonzero element is infinite and $S=\{0,\infty\}$, i.e., $S$ is purely infinite.
\end{proof}

Hence, in the simple case, $\beta$-comparison implies the dichotomy of being either stably finite or purely infinite. By Proposition \ref{BetaVsOmega}, $\omega$-comparison implies the same dichotomy of a simple $\Cu$-semigroup $S$ satisfying (O1)--(O6) and different from $S_n$ for any $n$.

One sees from the above example that the CFP allows for the existence of both finite and infinite elements in simple $\Cu$-semigroups. In particular, the CFP is not equivalent to the following stronger statement:

$$\mbox{If }m\cdot \sum_{n=1}^\infty y_n=\infty\mbox{, then }\sum_{n=1}^\infty y_n=\infty.$$

We now turn our attention to $\Cu$-semigroups without the CFP (therefore without $\omega$-comparison and $\beta$-comparison) that are neither stably finite nor purely infinite. The next result provides a  characterization of a simple $\Cu$-semigroup not having the CFP, which we subsequently use to find an explicit simple $\Cu$-semigroup without the CFP that is neither stably finite nor purely infinite. However, this semigroup does not satisfy the axiom (O6). (See Theorem \ref{RordamsAlgebraNoCFP} for the existence of a simple $\Cu$-semigroup with (O6) and without the CFP that is neither stably finite nor purely infinite, which is given as the Cuntz semigroup of a C*-algebra. This Cuntz semigroup, however, has not been computed yet.)

\begin{proposition}\label{CharactCFP}
Let $S$ be a simple $\Cu$-semigroup, containing a finite compact element, and with $\infty\ll\infty$.  Then $S$ does not have the CFP if and only if there is a sequence of elements $(z_n)$ in $S_{\ll\infty}$ such that $2z_n=\infty$ and $\sum_{n=1}^\infty z_n<\infty$.
\end{proposition}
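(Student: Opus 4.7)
The plan is to rely on the characterization of the CFP given in Proposition~\ref{PropCFP}(iv), which translates failure of the CFP into the existence of a sequence $(y_n)$ in $S_{\ll\infty}$ and an integer $m$ such that $m\sum_{n\geq k}y_n=\infty$ for every $k$, while $\sum_n y_n<\infty$. Note first that the hypothesis $\infty\ll\infty$ forces $S_{\ll\infty}=S$, so I can henceforth forget the decoration ``$\ll\infty$''.

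The ``if'' direction is almost immediate: given $(z_n)$ with $2z_n=\infty$ and $\sum z_n<\infty$, the choice $y_n:=z_n$ and $m:=2$ yields $2\sum_{n\geq k}z_n\geq 2z_k=\infty$ for each $k$, while $\sum z_n<\infty$. Thus (iv) of Proposition~\ref{PropCFP} fails and so does the CFP.

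For the converse I would start with witnesses $(y_n)$ and $m$ as above, setting $w_k:=\sum_{n\geq k}y_n$. Necessarily $m\geq 2$, since $w_1<\infty$. I would then choose $m$ minimal with $mw_k=\infty$ for all $k$; by minimality some $k_0$ satisfies $(m-1)w_{k_0}<\infty$, and because $(w_k)$ is decreasing, shifting the sequence so that its new first index is $k_0$ yields the stronger condition $(m-1)w_1<\infty$ without spoiling $mw_k=\infty$ for every $k$. This shift is the heart of the argument, for without it the bound on $\sum z_n$ below would be vacuous. Next, for each $k$, compactness of $\infty$ applied to the increasing sequence (by (O4)) $mw_k=\sup_N m(y_k+\cdots+y_{k+N-1})$ produces $N(k)$ with $mv_k=\infty$, where $v_k:=y_k+\cdots+y_{k+N(k)-1}\leq w_k$. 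Taking $k_1:=1$ and $k_{n+1}:=k_n+N(k_n)$, the blocks $[k_n,k_{n+1}-1]$ are disjoint in $\mathbb{N}$, so $\sum_n v_{k_n}\leq w_1$.

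I would finish by defining $z_n:=\lceil m/2\rceil\cdot v_{k_n}$. Then $2z_n\geq mv_{k_n}=\infty$, and
\[
\sum_n z_n\leq \lceil m/2\rceil\cdot\sum_n v_{k_n}\leq \lceil m/2\rceil\, w_1\leq (m-1)\,w_1<\infty,
\]
the last inequality using the elementary fact $\lceil m/2\rceil\leq m-1$ for $m\geq 2$. The principal obstacle, as emphasized, is the shift producing $(m-1)w_1<\infty$; with it, the upgrade from an arbitrary coefficient $m$ to coefficient exactly $2$ runs smoothly. Simplicity of $S$ and the existence of a finite compact element do not appear to be used in the argument, so these hypotheses presumably are included to align with the subsequent application of the proposition.
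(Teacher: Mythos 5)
Your proposal is correct and follows essentially the same route as the paper: invoke Proposition~\ref{PropCFP}(iv), take $m$ minimal, discard an initial segment and pass to the multiples $\lceil m/2\rceil\cdot(\,\cdot\,)$ to reduce to coefficient $2$, then use compactness of $\infty$ to group the $y_n$ into disjoint finite blocks. Your write-up merely makes explicit the step the paper compresses into ``replacing each $y_n$ with a suitable multiple of itself and possibly discarding a finite number of $y_n$'s, we may assume that $m=2$,'' and your closing remark about which hypotheses are actually used is accurate.
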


\begin{proof}
It is clear that the existence of such a sequence implies the lack of CFP. Conversely, suppose $S$ does not have the CFP. Then, by Proposition \ref{PropCFP}, there is $m\in \mathbb{N}$ and a sequence $(y_n)$ in $S_{\ll\infty}$ such that $m\cdot \sum_{n=k}^\infty y_n=\infty$ for all $k$; however, $\sum_{n=1}^\infty y_n\neq \infty$. Choosing $m$ to be minimal, replacing each $y_n$ with a suitable multiple of itself and possibly discarding a finite number of $y_n$'s, we may assume that $m=2$. Now $2\cdot \sum_{n=k}^\infty y_n=\infty$ for all $k$ and $\infty$ is compact. Thus, for each $k$ there is some $N(k)$ such that $2\cdot \sum_{n=k}^{N(k)} y_n=\infty$. Choose $z_1=\sum_{n=1}^{N(1)}y_n$, and then choose $z_{n+1}$ inductively from $z_n=\sum_{n=s}^{t}y_n$ to be $z_{n+1}=\sum_{t+1}^{N(t+1)}y_n$. Then, the sequence $(z_n)$ in $S_{\ll\infty}$ satisfies $2z_n=\infty$, and $\sum_{n=1}^\infty z_n =\sum_{n=1}^\infty y_n<\infty$ as required.
\end{proof}

Explicitly, we have the following example. 
\begin{example}\label{Ex:nonStablynoPurely}
We construct an example of a simple $\Cu$-semigroup satisfying (O5), without the CFP, and which is neither stably finite nor purely infinite. However, our example does not satisfy the axiom (O6) of almost Riesz refinement.
\end{example}

\begin{proof}
Let $$S=[0,1]^\mathbb{N}\cup\{\infty\},$$
with addition given by componentwise addition and with the relation that $x+y=\infty$, whenever any component exceeds 1. One checks that $S$ is simple and that it satisfies all the required axioms, i.e. (O1)--(O5). 

Letting $y_n$ denote the element in $S$, which is 1 at position $n$ and zero elsewhere, Proposition \ref{CharactCFP} applies to show that $S$ does not have the CFP.

To see that (O6) does not hold, consider $y_1=(1,0,0,\ldots)$ and $y_2=(0,1,0,0,\ldots)$. We have $y_1\leq y_2+y_2=\infty$, but we cannot find any nonzero elements $x_1,x_2\leq y_1,y_2$.
\end{proof}

Alternatively, in the previous example one could have used $S=\{0,1\}^\mathbb{N}\cup\{\infty\}$ instead, but our aim was to show that one can guarantee the non-existence of minimal nonzero elements in $S$. The failure of axiom (O6) in the last example can be generalized as follows.

\begin{proposition}(\cite[Lemma 5.1.18]{APT14})
Let $S$ be a simple $\Cu$-semigroup satisfying (O6). Then for any finite number of elements $y_1,\ldots, y_n$ in $S_{\ll\infty}$ there is some nonzero $z\in S$ such that $z\leq y_j$ for all $j$.
\end{proposition}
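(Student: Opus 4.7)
The plan is to argue by induction on $n$, with the crux of the matter being the case $n=2$. For $n=1$ take $z=y_1$. For $n\geq 3$, the inductive hypothesis yields a nonzero $w\leq y_1,\ldots,y_{n-1}$; applying the $n=2$ case to the pair $w,y_n$ (both nonzero elements of the simple semigroup) produces a nonzero $z\leq w,y_n$, and hence $z\leq y_j$ for all $j$. Note that this reduction does not require $w$ to lie in $S_{\ll\infty}$; the $n=2$ case only needs that the two elements are nonzero and that $S$ is simple and satisfies (O6).

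For the case $n=2$, given nonzero $y_1,y_2\in S$, I would first use (O2) to extract a nonzero $y_1'\ll y_1$: writing $y_1=\sup_k y_1^{(k)}$ as the supremum of a rapidly increasing sequence, some $y_1^{(k_0)}$ is nonzero, and $y_1^{(k_0)}\ll y_1^{(k_0+1)}\leq y_1$ gives $y_1^{(k_0)}\ll y_1$. By simplicity, $y_1'\,\bar\propto\, y_2$, and since $y_1'$ is itself approximated from below by rapidly increasing terms, some strict predecessor is bounded by a multiple of $y_2$; by passing to a large enough element I may assume outright that $y_1'\leq m y_2$ for some $m\geq 1$.

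The heart of the argument is then an inner induction on $m\geq 1$ establishing the following claim: whenever $u\in S$ is nonzero with $u\leq m y_2$, there is a nonzero $z\in S$ with $z\leq u$ and $z\leq y_2$. Applied to $u:=y_1'$ this finishes the $n=2$ case, since then $z\leq y_1'\leq y_1$. For the base case $m=1$, take $z=u$. For the inductive step $m\geq 2$, pick (again via (O2) and nonvanishing of $u$) a nonzero $u'\ll u$ and apply (O6) to $u'\ll u\leq y_2+(m-1)y_2$: this produces $a\leq u,y_2$ and $b\leq u,(m-1)y_2$ with $a+b\geq u'$. Since $u'\neq 0$, at least one of $a,b$ is nonzero. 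If $a\neq 0$, take $z=a$. Otherwise $b\geq u'\neq 0$, and the inductive hypothesis applied with $b$ in place of $u$ (and multiplicity $m-1$) yields a nonzero $z\leq b\leq u$ with $z\leq y_2$, as desired.

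The main obstacle is precisely the iteration of (O6): a single application yields a remainder $b\leq (m-1)y_2$ that is not a priori compactly contained in anything convenient, so one cannot naively keep splitting. The inductive formulation circumvents this by folding the decomposition into a recursion on the multiplicity, where at each stage (O2) freshly supplies the strict predecessor needed to invoke (O6). Apart from this inductive bookkeeping, the only ingredients used are simplicity (to produce the integer $m$), (O2) (to guarantee the existence of nonzero strict predecessors of any nonzero element), and (O6) itself.
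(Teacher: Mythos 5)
Your argument is correct. Note, however, that the paper itself gives no proof of this proposition --- it is quoted verbatim from \cite[Lemma 5.1.18]{APT14} --- so there is no in-paper argument to compare against; what you have written is a valid self-contained proof. The two points worth highlighting are exactly the ones you isolate: (a) the outer reduction to $n=2$ is immediate once one observes that the pairwise case needs only nonvanishing, simplicity, (O2) and (O6), not membership in $S_{\ll\infty}$; and (b) the genuine difficulty of iterating (O6) (the remainder $b\leq (m-1)y_2$ has no preferred compactly contained predecessor) is cleanly absorbed into the inner induction on the multiplicity $m$, where (O2) re-supplies a nonzero $u'\ll u$ at each stage and positivity of the order forces one of the two pieces produced by (O6) to be nonzero. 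Each step checks out: $a\ll b\leq c$ implies $a\ll c$, so a nonzero compactly contained predecessor of any nonzero element exists by (O2); simplicity in the paper's sense gives $y_1'\leq m y_2$ directly from $y_1'\ll y_1$; and $a+b\geq u'\neq 0$ with $a=b=0$ would force $u'=0$. Your proof in fact establishes the slightly stronger statement for arbitrary nonzero $y_1,\ldots,y_n$, which also makes explicit that the hypothesis $y_j\in S_{\ll\infty}$ in the statement is superfluous (while nonvanishing of the $y_j$ is implicitly necessary, since $z\leq 0$ forces $z=0$ in a positively ordered semigroup).
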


%

\begin{remark}
Theorem \ref{RordamsAlgebraNoCFP} shows the existence of a simple C*-algebra $A$ such that its Cuntz semigroup $\Cu(A)$ is neither stably finite nor purely infinite and fails to have the CFP. On the other hand, it seems difficult to write down an explicit example of a simple $\Cu$-semigroup $S$, neither stably finite nor purely infinite, satisfying all the axioms (O1)--(O6) and failing to satisfy the CFP. By the previous proposition, in such a semigroup, for any finite number of elements $y_1,\ldots,y_n$ one can find a nonzero element $z\in S$ such that $z\leq y_j$ for all $j=1,2,\ldots,n$. However, if the CFP fails in $S$ and this failure is witnessed by a sequence $(y_j)$, then there is no nonzero $z\in S$ such that $z\leq y_j$ for all $j\in\mathbb{N}$, as otherwise $\sum_{j=1}^\infty y_j\geq \infty\cdot z=\infty$.

\end{remark} 

The CFP is closely related to the property {\rm (QQ)}, which was introduced in \cite{OPR2}.

\begin{definition}(\cite{OPR2})
A positively ordered abelian semigroup $W$ satisfies the property {\rm (QQ)} if every element in $W$, for which a multiple is properly infinite, is itself properly infinite.  
\end{definition}

The following relations are immediate.

\begin{proposition}\label{PropQQ}
Let $S$ be a simple $\Cu$-semigroup. 
\begin{itemize}
\item[(i)] If $S$ has $\beta$-comparison, then $S$ has property {\rm (QQ)}. 
\item[(ii)] If $S$ has {\rm (QQ)}, then $S$ has the CFP.
\end{itemize}
\end{proposition}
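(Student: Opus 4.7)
The plan is to address the two parts separately, both of which follow quite directly from tools already developed in the paper.

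For (i), the key is to apply Lemma \ref{prop1} with $x = y$. The hypotheses ($y \leq my$ and $y \leq ny$, with $m = n = 1$) are trivially satisfied, so the equivalence of conditions (iii) and (i) in that lemma tells us that if some multiple of $y$ is properly infinite, then $\beta(y, y) = 0$. If $y = 0$ there is nothing to prove since $0$ is trivially properly infinite, so assume $y$ is nonzero. Then Lemma \ref{BetaLemma}, applied to $\beta(y, y) = 0$ together with the hypothesized $\beta$-comparison, gives $y = \infty$, which is certainly properly infinite (indeed $\infty + \infty = \infty$).

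For (ii), the approach is to verify condition (iv) of Proposition \ref{PropCFP}, which is equivalent to CFP. Let $(y_n)$ be a sequence in $S_{\ll\infty}$ with $m \cdot \sum_{n=k}^\infty y_n = \infty$ for some fixed $m$ and all $k \in \mathbb{N}$. Set $y := \sum_{n=1}^\infty y_n$. Then $my \geq m \cdot \sum_{n=1}^\infty y_n = \infty$, so $my = \infty$ is properly infinite. By (QQ), $y$ itself is properly infinite, i.e.\ $2y \leq y$. Note $y \neq 0$, since otherwise $my = 0 \neq \infty$. Simplicity then upgrades ``properly infinite'' to ``equal to $\infty$'': iterating $2y \leq y$ gives $ny \leq y$ for all $n$, and for any $x \ll \infty$ simplicity (fullness of $y$) yields some $n$ with $x \leq ny \leq y$; taking the supremum over such $x$ via (O2) produces $\infty \leq y$.

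The two arguments are essentially mechanical applications of results established earlier in the paper, and I do not foresee a significant obstacle. The only mildly delicate point is the passage from ``$y$ nonzero and properly infinite'' to ``$y = \infty$'' in the proof of (ii), where simplicity is used essentially; however this is a standard feature of simple $\Cu$-semigroups and requires no new technology.
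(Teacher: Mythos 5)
Your proof is correct and follows essentially the same route as the paper's, which for (i) observes that $n\cdot x=\infty$ forces $\beta(z,x)=0$ for all $z$ and then invokes $\beta$-comparison, and which dismisses (ii) as immediate from Proposition \ref{PropCFP}; your detour through Lemma \ref{prop1} with $x=y$ and your explicit verification of condition (iv) of Proposition \ref{PropCFP} are just fuller versions of the same argument. One pedantic remark: under the paper's definition a properly infinite element must be infinite, so $0$ is \emph{not} properly infinite; the case $y=0$ is instead vacuous for (QQ) because no multiple of $0$ can be properly infinite --- this changes nothing in your argument.
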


\begin{proof}
For (i), let $x\in S$ and $n\in \mathbb{N}$ with $n\cdot x=\infty$. Then $\beta(z,x)=0$ for all $z$, and by $\beta$-comparison, $x=\infty$. Statement (ii) is trivial (with Proposition \ref{PropCFP}).  
\end{proof}

\begin{remark}
Attempting to prove the converse to {\rm (i)} in the most direct fashion, one would hope that $\beta(x,y)=0$ for some nonzero $x$ (equivalently, $\lambda(y)=\infty$ for all functionals $\lambda$) implies that some multiple of $y$ should be infinite. That this does not hold in general has already been noted in Example \ref{ExampleBetaVsQQ}. Hence, to show that the converse to (i) holds, one would need that the existence of some $y\neq \infty$ with $\beta(x,y)=0$ for some nonzero $x$ implies the existence of some $z\neq \infty$ in $S$ (possibly $z\neq y$) such that $n\cdot z=\infty$. 
\end{remark}

We introduce a new property related to the existence of both finite and infinite elements in $S_{\ll\infty}$:

\begin{definition}
 A complete abelian positively ordered semigroup $W$, containing a largest element $\infty$, has cancellation of small elements at infinity, if whenever $x$ and $y$ are elements in $W$ with $x\ll \infty$, $y\neq 0$ and $x+y=\infty$, then $y=\infty$.
\end{definition}

It is clear that if $S\neq \{0,\infty\}$ and $\infty$ is compact in $S$, then cancellation of small elements at infinity fails. Example \ref{Ex:stablyNoFunctionals} shows that cancellation of small elements at infinity can also fail in a stably finite $\Cu$-semigroup satisfying all axioms  (O1)--(O6). It is not known (but possibly expected) whether cancellation at infinity holds for the Cuntz semigroup $\Cu(A)$ of a simple stably finite C*-algebra $A$.

The next result shows that cancellation of small elements at infinity holds when $S$ satisfies either {\rm (QQ)} or $\omega$-comparison or $\beta$-comparison.

\begin{proposition}\label{QQImpliesCAI}
Let $S$ be a simple $\Cu$-semigroup.
\begin{enumerate}[\rm(i)]
 \item  If $S$ has property {\rm (QQ)}, then it has cancellation of small elements at infinity.
 \item If $S$ has $\beta$-comparison, then it has cancellation of small elements at infinity.
\end{enumerate}
\end{proposition}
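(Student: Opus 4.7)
The plan is to handle (i) directly using simplicity plus (QQ), and then deduce (ii) either as a direct corollary of (i) via Proposition \ref{PropQQ}(i), or independently through the functional characterization of $\beta$-comparison.

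For (i), suppose $x \ll \infty$, $y \neq 0$, and $x + y = \infty$. Since $S$ is simple and $x \in S_{\ll\infty}$ while $y$ is nonzero, there exists $n \in \mathbb{N}$ with $x \leq n\cdot y$, so
\[
\infty = x + y \leq (n+1)\cdot y.
\]
In particular $(n+1)\cdot y = \infty$ is properly infinite (since $\infty + \infty = \infty$). By property (QQ), $y$ is itself properly infinite, i.e.\ $2y \leq y$. Iterating this gives $2^k y \leq y$ for every $k$, and choosing $k$ large enough that $2^k \geq n+1$ we obtain $\infty = (n+1)\cdot y \leq 2^k y \leq y$, forcing $y = \infty$.

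For (ii), the quickest route is to observe that by Proposition \ref{PropQQ}(i), $\beta$-comparison implies (QQ), so (ii) follows immediately from (i). Alternatively, a direct proof works as follows: given $x \ll \infty$, $y \neq 0$, and $x + y = \infty$, one shows that $\lambda(y) = \infty$ for every functional $\lambda \in F(S)$. Indeed, $\lambda(x) + \lambda(y) = \lambda(\infty) = \infty$, so either $\lambda(y) = \infty$ (done), or $\lambda(x) = \infty$. In the latter case, $x \in S_{\ll\infty}$ together with Lemma \ref{InfiniteMultiples} forces $\lambda = \lambda_\infty$, and hence $\lambda(y) = \infty$ as well since $y \neq 0$. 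Now Lemma \ref{BetaAndFunctionals} gives a nonzero $z \in S$ with $\beta(z, y) = 0$, and $\beta$-comparison via Lemma \ref{BetaLemma} yields $y = \infty$.

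The main (minor) subtlety is the step in (i) where one passes from ``$y$ is properly infinite'' to ``$y = \infty$''; this uses the extra information that some specific multiple of $y$ is already $\infty$, not merely that $y$ absorbs itself. Everything else is a direct unwinding of definitions and previously established lemmas, so I do not anticipate any real obstacle.
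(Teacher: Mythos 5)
Your proof is correct and follows essentially the same route as the paper: simplicity gives $x\leq n\cdot y$, hence $(n+1)\cdot y=\infty$, and (QQ) finishes part (i), with part (ii) deduced from Proposition \ref{PropQQ}. You additionally spell out the step from ``$y$ properly infinite'' to ``$y=\infty$'' (which the paper elides) and offer a valid alternative functional-based argument for (ii); both additions are sound.
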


\begin{proof}
Suppose $x\ll \infty$ and $x+y=\infty$. Since $x\ll \infty$ and also using simplicity, there is $n\in \mathbb{N}$ such that $x\leq n\cdot y$. Hence $(n+1)\cdot y=\infty$. By property (QQ), we get $y=\infty$. The second statement easily follows from combining {\rm (i)} with Proposition \ref{PropQQ}.
\end{proof}
%

As we shall see below, the converse to Proposition \ref{QQImpliesCAI}(i) holds for certain simple $\Cu$-semigroups. Recall that a Cuntz semigroup is called algebraic, if every element can be written as the supremum of an increasing sequence of compact elements.

%
%
%
%

\begin{proposition}\label{CFPVsQQ}
Let $S$ be a simple algebraic $\Cu$-semigroup with {\rm (O5)}.  Then $S$ has property {\rm (QQ)} if and only if $S$ has both the CFP and cancellation of small elements at infinity.  
\end{proposition}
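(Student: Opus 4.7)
The plan is to prove the two implications separately. The forward direction is immediate from previously established results: combining Proposition \ref{PropQQ}(ii), which gives that (QQ) implies the CFP, with Proposition \ref{QQImpliesCAI}(i), which gives that (QQ) implies cancellation of small elements at infinity, yields the implication from (QQ) to the conjunction of the two properties.

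For the converse, assume $S$ satisfies both the CFP and cancellation of small elements at infinity, and let $x \in S$ with $nx$ properly infinite for some $n \in \mathbb{N}$. The task is to show $x$ is properly infinite; assuming $x \neq 0$ (the zero case being trivial), simplicity of $S$ reduces this to showing $x = \infty$. Start with the easy case when $x$ is compact: then $x \ll \infty$, and applying cancellation of small elements at infinity to $nx = x + (n-1)x = \infty$, with $a := x \ll \infty$ and $b := (n-1)x$ nonzero when $n \geq 2$, yields $(n-1)x = \infty$; an induction on $n$ then gives $x = \infty$.

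For general $x$, use algebraicity to write $x = \sup_k x_k$ with each $x_k$ compact and nonzero (achieved after passing to a tail of the defining sequence). The sequence $(nx_k)_k$ is full in $S$ by simplicity, while $(x_k)_k$ lies in $S_{\ll\infty}$ and satisfies $nx_k \leq n \cdot x_k$; Proposition \ref{PropCFP}(iii) accordingly yields $\sum_{k=1}^{\infty} x_k = \infty$. Splitting this sum as $(x_1 + \ldots + x_{j-1}) + \sum_{k \geq j} x_k = \infty$, where the initial part is compact (hence $\ll \infty$) and the tail is nonzero, cancellation of small elements at infinity yields $\sum_{k \geq j} x_k = \infty$ for every $j$.

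The remaining task, and the crux of the proof, is to extract a compact sub-element $c \leq x$ with $nc = \infty$, since then the compact case produces $c = \infty$ and hence $x \geq c = \infty$. To this end, apply (O5) to $x_k \ll x_{k+1} \leq x$ to obtain $z_k \in S$ with $x_k + z_k \leq x$ and $x \leq x_{k+1} + z_k$, noting that $z_k \neq 0$ (else $x \leq x_{k+1}$ would force $x$ to be compact). Multiplying by $n$ gives $\infty = nx \leq nx_{k+1} + nz_k$; since $nx_{k+1}$ is compact and hence $\ll \infty$, cancellation of small elements at infinity forces $nz_k = \infty$. Writing $z_k$ as a supremum of compacts via algebraicity and combining with further applications of (O5) together with the infinite tail sums $\sum_{k \geq j} x_k = \infty$, one arrives at the desired compact $c$. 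The hardest step is exactly this compact-extraction, because a priori the compacts approximating $z_k$ from below need not individually satisfy $n \cdot c = \infty$; a careful interplay of algebraicity, (O5), and cancellation of small elements at infinity within the infinite tail sums is required to secure the needed compact witness.
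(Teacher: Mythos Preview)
Your forward direction and your treatment of the compact case are correct and match the paper. The problem is in your general case: the ``compact extraction'' step you yourself flag as the crux is not actually carried out. From $nz_k = \infty$ with $z_k \leq x$ you are no better off than at the start---you have merely replaced $x$ by another non-compact element $z_k$ satisfying the same hypothesis---and passing to compact approximants of $z_k$ loses precisely the relation $n\cdot(\text{approximant}) = \infty$ that you need. The clause ``one arrives at the desired compact $c$'' is an assertion, not an argument, and the tail identities $\sum_{k\geq j} x_k = \infty$ do not obviously help, since $\sum_k x_k$ may be strictly larger than $x = \sup_k x_k$.

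The paper's argument sidesteps this entirely by applying (O5) at a different place. Rather than $x_k \ll x_{k+1} \leq x$, use compactness of $x_k$ to apply (O5) to $x_k \ll x_k \leq x_{k+1}$: this yields $y_k$ with $x_k + y_k \leq x_{k+1} \leq x_k + y_k$, hence $x_{k+1} = x_k + y_k$ exactly, and telescoping gives a genuine series representation $x = x_1 + \sum_{k\geq 1} y_k$ with every summand in $S_{\ll\infty}$. Now observe that, under cancellation of small elements at infinity, condition (iv) of Proposition~\ref{PropCFP} simplifies to the single statement ``$m\cdot \sum_{j} y_j = \infty \Rightarrow \sum_j y_j = \infty$'': the tail hypotheses for arbitrary $k$ follow by splitting off the finite initial segment $m(y_1+\cdots+y_{k-1})\ll\infty$ and cancelling. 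Applying this with $m=n$ to the series for $x$ gives $x = \sum_j y_j = \infty$ directly. No compact witness below $x$ is ever sought; the point is that algebraicity together with (O5) lets one write \emph{every} element of $S$ as an infinite sum of elements of $S_{\ll\infty}$, and for such sums the CFP combined with cancellation is exactly property~(QQ).
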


\begin{proof}
Proposition \ref{QQImpliesCAI} and Proposition \ref{PropQQ} show the 'only if'-direction. 

Using Proposition \ref{PropCFP} one sees that, under the assumption of cancellation of small elements at infinity, the CFP can be rephrased as the statement that if $(y_j)_j$ is a sequence in $S_{\ll\infty}$ such that $m\cdot \sum_{j=1}^\infty y_j=\infty$ for some $m\in \mathbb{N}$, then $\sum_{j=1}^\infty y_j=\infty$. Loosely speaking, the CFP equals property {\rm (QQ)} for elements of the form $y=\sum_{j=1}^\infty y_j$ with $y_j\ll\infty$ for all $j$. Using (O5) and the assumption that $S$ is algebraic, one see that every element in $S$ can be written as $\sum_{j=1}^\infty y_j$ for suitable $y_j\ll\infty$. Hence, property {\rm (QQ)} holds.
\end{proof}

Proposition \ref{CFPVsQQ} can be generalized to the simple non-algebraic case with a minor technical limitation. (The `only if'-direction holds for a general simple $\Cu$-semigroup.)

\begin{proposition}\label{CFPVsQQPart2}
Let $S$ be a simple algebraic $\Cu$-semigroup with (O5) and with cancellation of small elements at infinity. Suppose that $S$ does not have property {\rm (QQ)} and the failure of {\rm (QQ)} is witnessed by an element $x\in S$ and some $m>2$, such that $m\cdot x=\infty$, but $(m-1)x<\infty$. Then $S$ does not have the CFP.  
\end{proposition}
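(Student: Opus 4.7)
The plan is to construct a sequence $(z_n)_n$ in $S_{\ll\infty}$ satisfying $\sum_n z_n\leq (m-1)x<\infty$ but $m\sum_{j=k}^\infty z_j=\infty$ for every $k\in\mathbb{N}$, which by Proposition~\ref{PropCFP}(iv) shows that $S$ fails the CFP. To this end, I would fix a rapidly increasing sequence $(x_n)_n$ in $S$ with $\sup_n x_n=x$. Since $x_n\ll x_{n+1}\leq\infty$ forces $x_n\ll\infty$, each $x_n$ and each $mx_n$ lies in $S_{\ll\infty}$. Applying axiom (O5) to $x_n\ll x_{n+1}\leq x_{n+2}$ produces $z_n\in S$ with
\[
x_n+z_n\;\leq\;x_{n+2}\;\leq\;x_{n+1}+z_n,
\]
and since $z_n\leq x_{n+2}\ll\infty$, one has $z_n\in S_{\ll\infty}$.

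The two sides of (O5) supply complementary bounds. Iterating the lower inequality along odd and even indices yields $x_{2k+1}\geq x_1+\sum_{j=1}^k z_{2j-1}$ and $x_{2k+2}\geq x_2+\sum_{j=1}^k z_{2j}$; adding and taking suprema gives $\sum_n z_n\leq 2x\leq (m-1)x<\infty$, where the last bound uses $m>2$. Iterating the upper inequality from index $n$ down to $k+1$ produces $x_{n+2}\leq x_{k+1}+\sum_{j=k}^n z_j$, and taking the supremum over $n$ yields $x\leq x_{k+1}+\sum_{j=k}^\infty z_j$ for every $k$. Multiplying by $m$ and using $mx=\infty$ then gives
\[
mx_{k+1}+m\sum_{j=k}^\infty z_j=\infty.
\]

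To close the argument, I would observe that $mx_{k+1}\in S_{\ll\infty}$ but $mx_{k+1}\neq\infty$: the relation $(m-1)x_{k+1}+x_{k+1}=\infty$ with $(m-1)x_{k+1}\ll\infty$ and $x_{k+1}\neq 0$ would, by cancellation of small elements at infinity, force $x_{k+1}=\infty$, contradicting $x_{k+1}\leq x<\infty$. Cancellation of small elements at infinity applied to the displayed equation above then forces $m\sum_{j=k}^\infty z_j\in\{0,\infty\}$; it cannot be zero, for that would give $mx_{k+1}=\infty$. Hence $m\sum_{j=k}^\infty z_j=\infty$ for every $k$, completing the construction. The main obstacle is precisely this last step — the delicate interplay with cancellation of small elements at infinity, where one must rule out both that $mx_{k+1}$ equals $\infty$ and that the tail sums vanish. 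The hypothesis $m>2$ is also essential, as it is what converts the bound $\sum_n z_n\leq 2x$ into the useful finite bound $\sum_n z_n\leq(m-1)x<\infty$; at $m=2$ the construction would collapse.
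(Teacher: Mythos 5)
Your proof is correct, and it is worth noting that the paper does not actually write out a proof of this proposition: it is omitted with a pointer to the first part of the proof of Theorem \ref{CFPVsQQAlg}, where the sequence $(z_n)$ is produced by C*-algebraic functional calculus (the cut-off functions $f_{1/2^n,3/2^n}(a)$, whose even- and odd-indexed members are mutually orthogonal, giving $\sum_n z_n\le 2x$). You achieve exactly the same skeleton --- a sequence $(z_n)$ in $S_{\ll\infty}$ with $\sum_n z_n\le 2x\le(m-1)x<\infty$ whose tails become $\infty$ after multiplication by $m$, contradicting Proposition \ref{PropCFP}(iv) --- but you build the $z_n$ purely order-theoretically from (O5) applied along a rapidly increasing sequence $x_n\ll x_{n+1}$, with the two inequalities of (O5) playing the roles that orthogonality and the lower cut-off estimate play in the C*-argument. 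This buys two things. First, your argument is self-contained at the level of abstract $\Cu$-semigroups, which is what the statement requires (it is not assumed that $S=\Cu(A)$). Second, you never use the hypothesis that $S$ is algebraic; this is consistent with (and clarifies) the surrounding text, which announces the proposition as the generalization of Proposition \ref{CFPVsQQ} to the \emph{non-algebraic} case, the price being the restriction $m>2$ that you correctly identify as absorbing the factor $2$ in $\sum_n z_n\le 2x$. Your explicit verification of the tail condition $m\sum_{j\ge k}z_j=\infty$ via cancellation of small elements at infinity (including ruling out the degenerate cases $mx_{k+1}=\infty$ and vanishing tails) is a detail the paper glosses over and is handled correctly.
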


We omit the proof as the arguments are similar to the ones in (the first part of) the proof of Theorem \ref{CFPVsQQAlg}, in which we overcome the technical limitation (of needing $m$ to be strictly greater than 2) and prove the conclusion of Proposition \ref{CFPVsQQ} for any $\Cu$-semigroup $S=\Cu(A)$ coming from a simple C*-algebra $A$. 



\section{Applications to the Cuntz semigroup of a C*-algebra}\label{SectionApplications}

In this final section we use the results obtained in the previous sections at the level of general $\Cu$-semigroups to $\Cu$-semigroups arising from C*-algebras, i.e., the case where $S=\Cu(A)$. Theorem \ref{CFPVsQQAlg} shows that, for any simple C*-algebra $A$, the CFP in combination with cancellation of small elements at infinity is equivalent to property (QQ). We summarize the relations between all regularity properties studied in this paper in Theorem \ref{thm:end}. Finally, we show in Theorem \ref{RordamsAlgebraNoCFP} that the C*-algebra described in \cite{R03}, containing both a non-zero finite projection and an infinite projection, does not have the CFP. 

\begin{definition} Let $W$ be an ordered abelian semigroup. We
  say that $W$ has the weak halving property if for every
$x \in W$ there are $y_1,y_2 \in W$ such
  that $y_1+y_2 \le x$ and $x \propto y_j$ for $j=1,2$.
\end{definition}

Note that if an ordered abelian semigroup $W$ has the weak halving property, then inductively one can find a sequence $(y_n)_n$ of elements in $W$ such that for each $n\in \mathbb{N}$ one has $y_1+y_2+ \cdots+y_n\le x$ and $x \propto y_n$ for all $n$. In the case of a complete ordered semigroup, we also get $\sum_{j=1}^\infty y_j\leq x$.

\begin{lemma} \label{lm:halving}
  Let $A$ be a unital simple \Cs{} not of type I. Denoting by $W(A)$ the (original) Cuntz semigroup given by equivalence classes of positive elements in matrix algebras over $A$, it follows that
  $W(A)$ has the weak halving property.
\end{lemma}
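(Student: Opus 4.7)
The plan is to reduce the statement to Glimm's halving lemma applied inside a suitable hereditary subalgebra. Given a nonzero $x\in W(A)$, I would pick a representative $a\in M_n(A)_+\setminus\{0\}$ with $x=\langle a\rangle$ and consider the hereditary sub-C*-algebra $B:=\overline{aM_n(A)a}$. The first step is to verify that $B$ is itself simple and not of type I: simplicity is the standard fact that hereditary subalgebras of simple C*-algebras are simple, while non-type-I-ness follows because $B$ is a full hereditary subalgebra of the unital simple $M_n(A)$, hence Morita equivalent to it, and being of type I is a Morita-invariant property.

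Next, I would invoke Glimm's halving lemma in the form that any simple non-type-I C*-algebra contains two nonzero orthogonal positive elements (and indeed one can take them Cuntz equivalent). Applied to $B$, this yields nonzero orthogonal $b_1,b_2\in B_+$. Setting $y_j:=\langle b_j\rangle\in W(A)$, orthogonality gives $y_1+y_2=\langle b_1+b_2\rangle$, and since $b_1+b_2\in B=\overline{aM_n(A)a}$ one has $b_1+b_2\preceq a$, so $y_1+y_2\leq x$ in $W(A)$.

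For the relations $x\propto y_j$, the ingredient is that $A$ (and hence $M_n(A)$) is unital and simple, so the closed two-sided ideal of $M_n(A)$ generated by the nonzero element $b_j$ equals $M_n(A)$. A standard approximation argument using this fullness produces $N_j\in\mathbb{N}$ with $1_{M_n(A)}\preceq N_j\cdot b_j$, and since $a\preceq \|a\|\cdot 1_{M_n(A)}$, we conclude $x\leq N_j\cdot y_j$ in $W(A)$; that is, $x\propto y_j$. The trivial case $x=0$ is handled by taking $y_1=y_2=0$.

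The main obstacle I anticipate is the Glimm-halving step itself: one must select a convenient form of the lemma from the literature and first establish that $B$ inherits non-type-I-ness from $A$, which is taken care of by the Morita-equivalence observation above. Once that is in hand, the remaining verifications amount to a routine unwinding of the definition of Cuntz subequivalence together with the simplicity of $A$.
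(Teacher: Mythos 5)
Your proposal is correct and follows essentially the same route as the paper: both produce two nonzero pairwise orthogonal positive elements inside the hereditary subalgebra $\overline{aM_n(A)a}$ generated by a representative of $x$, conclude $y_1+y_2=\langle b_1+b_2\rangle\le\langle a\rangle=x$, and obtain $x\propto y_j$ from (algebraic) simplicity. The only divergence is the sub-step producing the orthogonal elements: the paper takes a maximal abelian sub-C*-algebra of $\overline{aM_n(A)a}$, which is infinite-dimensional because $A$ is not of type I and hence contains two orthogonal nonzero positive elements, thereby avoiding your detour through Morita invariance of type I and a Glimm-type lemma for the hereditary subalgebra.
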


\begin{proof}Let $x \in W(A)$ be given. Upon replacing $A$ by a
  matrix algebra over $A$, we may assume that $x = \langle a \rangle$
  for some positive element $a$ in $A$. We may also assume that $a$ is
  non-zero (as it is trivial to halve the zero-element). Take a
  maximal abelian sub-\Cs{} $D$ of $\overline{aAa}$. Then $D$ is
  infinite dimensional (by the assumption that $A$ is not of type I),
  and hence contains two non-zero pairwise orthogonal positive
  elements $b_1,b_2$. Put $y_j = \langle b_j \rangle$. Then $y_1+y_2 =
  \langle b_1 +b_2 \rangle \le \langle a \rangle = x$, and $x \propto
  y_j$ for $j=1,2$, because $W(A)$ is algebraically simple, i.e. $x\propto y$ for all $x,y\in W(A)$.
\end{proof}

The next example shows that the $\Cu$-semigroups $S_n$ (see the paragraph before Proposition \ref{BetaVsOmega}) can not arise as the Cuntz semigroup of a C*-algebra (cf. \cite[Remark 5.1.17]{APT14}). Hence, this shows that $\omega$-comparison and $\beta$-comparison are equivalent properties for any $\Cu$-semigroup coming from a C*-algebra.

\begin{example}\label{elementaryNon} Let $n$ be a natural number and let 
$$S_n = \{0,1,2, \dots,n,\infty\}$$
as in Proposition \ref{BetaVsOmega}. $S_n$ is simple and satisfies $\omega$-comparison, but not the property (QQ) (the element $1$ is finite but $\infty = (n+1)
\cdot 1$ is properly infinite), therefore neither $\beta$-comparison.

However, note that the semigroup $S_n$  fails to have the weak halving property, hence it
can not be the Cuntz semigroup of a simple C*-algebra by Lemma \ref{lm:halving}. (Note that if $\Cu(A)=S_n$  for the completed Cuntz semigroup of a simple C*-algebra $A$, then also $W(A)=S_n$.)
\end{example}

\begin{remark}
Leonel Robert shows in \cite{Robert13} that a simple Cuntz semigroup $S$ with axioms {\rm (O1)--(O6)} has either Glimm halving (for every nonzero $x\in S$ there is some nonzero $z\in S$ such that $2z\leq x$) or $S=S_n$ for some $n\in\mathbb{N}\cup\{\infty\}$. By the C*-algebraic proof of the weak halving property above, we can rule out the possibility of $S=S_n$ for some $n\in\mathbb{N}$. It follows that every Cuntz semigroup $S=\Cu(A)$, coming from a simple nonelementary C*-algebra $A$, has the Glimm halving property.
\end{remark}

We characterize $\omega$-comparison for simple $\Cu$-semigroups $S=\Cu(A)$ coming from a C*-algebra.

\begin{proposition}(cf. \cite{BRTTW})\label{BRTTW}
If there is a simple C*-algebra $A$ such that $S=\Cu(A)$, then the $\omega$-comparison is also equivalent to the following statements (see Propostion \ref{Omega}).
\begin{itemize}
\item[(iv)] If $y\in S$ is such that $\lambda(y)=\infty$ for all functionals $\lambda \in F(S)$, then $y=\infty$.
\item[(v)] $A$ is regular, i.e., whenever $D$ is a non-unital hereditary subalgebra of $A\otimes\mathcal{K}$ with no bounded quasitrace, then $D$ is stable.
\end{itemize}
\end{proposition}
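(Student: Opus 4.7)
The plan is to append $(iv)$ and $(v)$ to the list of equivalent characterizations of $\omega$-comparison from Proposition \ref{Omega}. For $(i) \Leftrightarrow (iv)$ I will reduce to Proposition \ref{Omega}(iv) via a functional-calculus decomposition of elements of $\Cu(A)$ as countable sums of $\ll\infty$-pieces, a device available in the C*-algebraic setting but not in an abstract $\Cu$-semigroup (witness Example \ref{ExampleBetaVsQQ}). For $(iv) \Leftrightarrow (v)$ I will invoke the main theorem of \cite{BRTTW}, read through the standard dictionary between $\Cu(A)$ and hereditary subalgebras of $A \otimes \mathcal{K}$.

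To prove $(i) \Leftrightarrow (iv)$, first note that $(iv)$ trivially implies Proposition \ref{Omega}(iv), just by applying $(iv)$ to $y := \sum_n y_n$. Conversely, assume $\omega$-comparison and pick $y \in S$ with $\lambda(y) = \infty$ for every $\lambda \in F(S)$. Since $\lambda(0) = 0$ we have $y \neq 0$. Choosing a positive representative $a \in (A \otimes \mathcal{K})_+$ of $y$, I would fix a null sequence $\epsilon_n \searrow 0$ and continuous bump functions $f_n \geq 0$ with pairwise disjoint supports in intervals $[\epsilon_{n+1}, \epsilon_{n-1}] \subseteq (0, \|a\|]$ summing (in the operator sense) to the identity on the spectrum of $a$, so that $a = \sum_n f_n(a)$. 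Each piece $a_n := f_n(a)$ is supported away from zero, so $\langle a_n \rangle \ll \infty$ (as $a_n$ is Cuntz-dominated by a positive element lying in a matrix corner of $A \otimes \mathcal{K}$), and the orthogonality of the $a_n$'s gives $y = \sum_n \langle a_n \rangle$ in $\Cu(A)$. After discarding zero terms, axiom $(O4)$ together with additivity of functionals yields $\sum_n \lambda(\langle a_n \rangle) = \lambda(y) = \infty$ for every $\lambda$, and Proposition \ref{Omega}(iv) then delivers $y = \infty$.

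For $(iv) \Leftrightarrow (v)$ I would use the following dictionary. Every hereditary subalgebra of $A \otimes \mathcal{K}$ has the form $D_a := \overline{a(A \otimes \mathcal{K})a}$ for some $a \in (A \otimes \mathcal{K})_+$; under this assignment, $D_a$ is non-unital iff $\langle a \rangle$ is non-compact, $D_a$ has no bounded quasitrace iff every $\lambda \in F(\Cu(A))$ satisfies $\lambda(\langle a \rangle) = \infty$ (bounded lower-semicontinuous $[0,\infty]$-valued quasitraces on $D_a$ being in bijection with functionals on $\Cu(D_a)$ finite at $\langle a \rangle$, and by simplicity of $A$ together with Morita invariance, $\Cu(D_a)$ is identified with $\Cu(A)$), and, via Hjelmborg--R{\o}rdam's characterization of stability, $D_a$ is stable iff a strictly positive element of $D_a$ (which is $a$ itself) has Cuntz class $\infty$ in $\Cu(D_a)$, i.e.\ $\langle a \rangle = \infty$ in $\Cu(A)$. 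Reading $(v)$ through this dictionary gives $(iv)$ for non-compact $\langle a \rangle$, with the compact case controlled by Lemma \ref{lm:halving} (which rules out the obstructive semigroups $S_n$ from occurring as $\Cu(A)$) and Lemma \ref{InfiniteMultiples}. The main technical obstacle is precisely the translation of stability of $D_a$ into the Cuntz-semigroup condition $\langle a \rangle = \infty$, which is the key contribution of \cite{BRTTW}.
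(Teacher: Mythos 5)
Your handling of (iv)$\Rightarrow$(i) (apply (iv) to $y=\sum_n y_n$ and quote Proposition \ref{Omega}) and of (iv)$\Leftrightarrow$(v) (a dictionary between hereditary subalgebras and Cuntz classes, with the technical core deferred to \cite{BRTTW}) is essentially what the paper does; the paper additionally reduces (iv)$\Leftrightarrow$(v) to the stably finite case by noting that both conditions imply the finite/purely infinite dichotomy before citing \cite[Theorem 4.2.1]{BRTTW}. The genuine gap is in your proof of (i)$\Rightarrow$(iv). Continuous bump functions on $(0,\|a\|]$ cannot have pairwise disjoint supports and at the same time sum to $1$ on the spectrum of $a$ unless $0$ is isolated from the rest of the spectrum: consecutive members of any partition of unity subordinate to the cover by the intervals $[\epsilon_{n+1},\epsilon_{n-1}]$ must overlap. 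So the pieces $a_n=f_n(a)$ are not pairwise orthogonal and you do not get $y=\sum_n\langle a_n\rangle$. The standard repair (used in the paper's proof of Theorem \ref{CFPVsQQAlg}) is to split into the mutually orthogonal even-indexed and the mutually orthogonal odd-indexed pieces, but this only yields $y\leq\sum_n z_n\leq 2y$, so Proposition \ref{Omega}(iv) gives $2y=\infty$ rather than $y=\infty$; passing from $2y=\infty$ to $y=\infty$ is precisely a property (QQ)/$\beta$-comparison statement, which is not available at that point of your argument. (A smaller related issue: applying Proposition \ref{Omega}(iv) to the even-indexed subsequence alone would require $\lambda\left(\sum_n z_{2n}\right)=\infty$ for every functional, which does not follow from $\lambda(y)=\infty$, since a given $\lambda$ could concentrate its mass on the odd-indexed pieces.)

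The paper closes this gap by a different route: Lemma \ref{lm:halving} together with Example \ref{elementaryNon} shows $\Cu(A)\neq S_n$, so Proposition \ref{BetaVsOmega} applies and $\omega$-comparison is equivalent to $\beta$-comparison; Lemmas \ref{BetaLemma} and \ref{BetaAndFunctionals} then identify $\beta$-comparison with statement (iv). The decomposition of $y$ into small pieces is carried out abstractly inside Proposition \ref{BetaVsOmega} using axiom (O5), which produces exact complements $z_i$ with $\sum_i z_{2i+1}\leq y$ (no factor of $2$), together with Glimm halving in the degenerate case where the only functional is $\lambda_\infty$. To salvage your approach you would need either an exact orthogonal decomposition of $y$ (which functional calculus alone does not provide) or a prior derivation of property (QQ) from $\omega$-comparison, and the latter is where the real work lies.
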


\begin{proof}
By Lemma \ref{lm:halving} (and Example \ref{elementaryNon}), $S\neq S_n$ for any $n\in \mathbb{N}$. It follows from Proposition \ref{BetaVsOmega} that $S$ has the $\omega$-comparison if and only if it has the $\beta$-comparison. This shows the equivalence of $\omega$-comparison and {\rm (iv)} with the help of Lemma \ref{BetaLemma} and Lemma \ref{BetaAndFunctionals}.

Since {\rm (iv)} and {\rm (v)} both imply dichotomy (by Proposition \ref{betaImpliesDichotomy} and \cite[Lemma 4.6]{NgCFP} respectively), it suffices to show the equivalence of {\rm (iv)} and {\rm (v)} in the case that all projections in the stabilization of $A$ are finite. In this case, the desired equivalence was shown in \cite[Theorem 4.2.1 (i) \& (iii)]{BRTTW} (see also the last paragraph of \cite[Section 3]{BRTTW}). 
\end{proof}

\begin{remark}\label{Rm:BRTTW}
Notice that it follows from Proposition \ref{BRTTW} that for a simple C*-algebra $r_{A,\infty}$ (radius of comparison with respect to $\infty$ (see \cite{BRTTW} for further details)) is zero if and only if $\Cu(A)$ satisfies $\omega$-comparison. Combining this with Proposition \ref{betaImpliesDichotomy}, ones gets that a simple C*-algebra $A$ with $r_{A,\infty}=0$ is either stably finite or purely infinite.
\end{remark}

It was shown in \cite{OPR2} that a $\sigma$-unital C*-algebra $A$ has the corona factorization property (i.e., every full projection in $\mathcal{M}(A\otimes \mathcal{K})$ is properly infinite) if and only if $\Cu(A)$ has the CFP. We discussed in Section 4 that the corona factorization property might allow for the existence of both finite and infinite compact elements in a simple $\Cu$-semigroup (Example \ref{Ex:CFP-noW}). One may therefore ask the question whether the simple nuclear C*-algebra $A$ containing both a non-zero finite and an infinite projection constructed in \cite{R03} has the CFP. That this is not the case is proven in Theorem \ref{RordamsAlgebraNoCFP}. Before proving Theorem \ref{RordamsAlgebraNoCFP}, let us first state the following result, which follows immediately from Proposition \ref{CharactCFP}. (But note that the proof to Theorem \ref{RordamsAlgebraNoCFP} only requires the trivial direction of Propostion \ref{NoCFPCharact}.)

\begin{proposition}\label{NoCFPCharact}
Let $A$ be a simple C*-algebra containing both a finite and an infinite projection. Then $A$ does not have the CFP if and only if there is a sequence of elements $(z_n)_n$ in $\Cu(A)$ such that $z_n\ll\infty$, $2z_n=\infty$ and $\sum_{n=1}^\infty z_n<\infty$.
\end{proposition}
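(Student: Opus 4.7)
The plan is to reduce the statement to Proposition \ref{CharactCFP} applied to $S = \Cu(A)$, combined with the \cite{OPR2} characterisation of the CFP for $\sigma$-unital C*-algebras in terms of the CFP of their Cuntz semigroup. It therefore suffices to verify the three hypotheses of Proposition \ref{CharactCFP} for $\Cu(A)$: simplicity, existence of a finite compact element, and $\infty \ll \infty$.

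Simplicity of $\Cu(A)$ is immediate from simplicity of $A$. For the finite compact element, take the class $\langle q \rangle \in \Cu(A)$ of a finite projection $q$ (finite in the stabilization): projections always yield compact elements of the Cuntz semigroup, and finiteness of $q$ as a projection in $A \otimes \cK$ translates directly into finiteness of $\langle q \rangle$ in the $\Cu$-semigroup sense.

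For $\infty \ll \infty$, invoke the infinite projection $p \in A$. A standard argument shows that in a simple C*-algebra any infinite projection is properly infinite: writing $p \sim p' < p$, the defect $p - p'$ is nonzero and hence full in $A$ by simplicity, and an absorption argument then yields $2\langle p \rangle \leq \langle p \rangle$. Consequently $n \langle p \rangle \leq \langle p \rangle$ for all $n$, and in the simple $\Cu$-semigroup $\Cu(A)$ this forces $\infty = \sup_n n\langle p \rangle \leq \langle p \rangle \leq \infty$. Hence $\infty = \langle p \rangle$, which is compact, so $\infty \ll \infty$.

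With all the hypotheses verified, Proposition \ref{CharactCFP} yields the equivalence between failure of the CFP in $\Cu(A)$ and the existence of the required sequence $(z_n)_n$. Translating between the C*-algebraic and the $\Cu$-semigroup formulation of the CFP is the role of the quoted result from \cite{OPR2}, which applies since $A$ is $\sigma$-unital (in fact it contains a projection). I do not anticipate any real obstacle: the argument is definitional bookkeeping, the only non-trivial input being the classical fact that infinite projections in a simple C*-algebra are automatically properly infinite. As the paper remarks, for the intended application (Theorem \ref{RordamsAlgebraNoCFP}) one only needs the trivial implication, namely that the existence of such a sequence $(z_n)_n$ forces failure of the CFP.
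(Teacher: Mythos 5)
Your argument is correct and is essentially the paper's: the authors simply assert that Proposition \ref{NoCFPCharact} ``follows immediately from Proposition \ref{CharactCFP}'', and you supply exactly the verification of its hypotheses (simplicity, a finite compact element from the finite projection, and $\infty=\langle p\rangle\ll\infty$ from the properly infinite projection) that this requires. The only slip is the parenthetical claim that containing a projection makes $A$ $\sigma$-unital --- it does not --- but this is harmless, since the statement is really about the CFP of the semigroup $\Cu(A)$ and the \cite{OPR2} translation is not needed for the reduction itself.
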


\begin{theorem}\label{RordamsAlgebraNoCFP}
The Cuntz semigroup of the simple nuclear C*-algebra $C$ constructed in \cite{R03}, containing both a non-zero finite and an infinite projection, does not have the CFP.
\end{theorem}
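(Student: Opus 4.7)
The plan is to invoke the easy direction of Proposition~\ref{NoCFPCharact}: once I exhibit a sequence $(z_n)_n$ in $\Cu(C)$ with $z_n\ll\infty$, $2z_n=\infty$, and $\sum_n z_n<\infty$, the CFP must fail. So the entire task reduces to producing such a sequence inside $\Cu(C)$, using only Rørdam's construction (the Cuntz semigroup of $C$ not being explicitly computed).

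I would begin by collecting the free structural information at hand. With $1_C=p+q$, where $p$ is nonzero finite and $q$ is (properly) infinite, simplicity of $C$ forces $q$ to be full. Since $q$ is properly infinite, $\langle q\rangle=n\langle q\rangle$ for all $n$, and fullness gives $\sup_n n\langle q\rangle=\infty$; hence $\langle q\rangle=\infty$, $\langle 1_C\rangle=\infty$, and $\infty$ is a compact element of $\Cu(C)$. In particular, any projection $r$ in $C\otimes\mathcal{K}$ for which $r\oplus r$ Cuntz-dominates a copy of $q$ automatically satisfies $2\langle r\rangle=\infty$. This identifies the type of element I am looking for: ``half-unit-like'' projections whose double Cuntz-dominates $1_C$.

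Next, I would turn to Rørdam's explicit inductive-limit description $C=\varinjlim(A_n,\varphi_n)$ with homogeneous building blocks, and construct at each stage a projection $r_n\in A_n$ whose image in $C\otimes\mathcal{K}$ (i) is comparable enough to $q$ that $2\langle r_n\rangle=\infty$, and (ii) sits orthogonally to all previously chosen $r_j$ inside a single positive element $a\in C\otimes\mathcal{K}$ of finite Cuntz class $\langle a\rangle<\infty$. Setting $z_n:=\langle r_n\rangle$ then gives the three desired conditions simultaneously: $z_n\le\langle a\rangle\ll\infty$, $2z_n=\infty$, and $\sum_n z_n=\langle a\rangle<\infty$. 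The mechanics of Rørdam's connecting maps, which mix the finite projection with an ever-growing infinite part at each stage, are what allow each $r_n$ to become ``absorbed by the unit in two steps'' in the limit while remaining orthogonal to the others.

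The main obstacle is precisely this simultaneous requirement: in a simple $\Cu$-semigroup, mere orthogonal copies of a single projection cannot satisfy $\sum_n z_n<\infty$, since fullness forces the sum to equal $\infty$. The $r_n$ must therefore be honestly different elements living in disjoint supports coming from successive levels of Rørdam's construction, with the ``size'' of each $r_n$ tuned through $\varphi_n$ so that its double becomes comparable to the unit while the orthogonal sum stays bounded below $\infty$. Executing this delicate bookkeeping is the heart of the proof; once the $r_n$ are produced, the conclusion via Proposition~\ref{NoCFPCharact} is immediate.
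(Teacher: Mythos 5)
Your reduction to the easy direction of Proposition~\ref{NoCFPCharact} is exactly the paper's strategy, and your structural observations (compactness of $\infty$, the criterion $2\langle r\rangle=\infty$ whenever $r\oplus r$ dominates the infinite projection, and the warning that repeated copies of a single class cannot have finite sum) are all correct. But the proposal stops precisely where the theorem's content begins: you never actually produce the sequence $(z_n)$, and you say yourself that ``executing this delicate bookkeeping is the heart of the proof.'' That is a genuine gap, not a completed argument. What is missing is the identification of concrete witnesses inside R\o rdam's construction, and this requires using its specific features rather than a generic inductive-limit picture. (Indeed, $C$ is not an inductive limit of homogeneous building blocks with connecting maps $\varphi_n$; it is a crossed product $D\rtimes_\alpha\mathbb{Z}$, where $D$ is built from copies $A_n=\alpha^n(\mu_{\infty,0}(A))$ of $A=C(\prod_{j=1}^\infty S^2,\mathcal{K})$ sitting inside an inductive limit of multiplier algebras.)

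Concretely, the paper takes the finite Bott projection $Q=\mu_{\infty,0}(p_1)$ and uses R\o rdam's Proposition~5.2: the endomorphism $\varphi$ satisfies $\varphi(p_1)>q_n$ for an infinite family of mutually orthogonal projections $q_n$ in $A$, each equivalent to a Bott projection $p_{\nu(n)}$ over a distinct coordinate of $\prod_{j=1}^\infty S^2$. Setting $z_n=\langle\mu_{\infty,0}(q_n)\rangle$, finiteness of $\sum_n z_n$ follows because the $q_n$ sit orthogonally under $\varphi(p_1)$ and, in the crossed product, $\langle\mu_{\infty,0}(\varphi(p_1))\rangle=\langle\alpha(\mu_{\infty,0}(p_1))\rangle=\langle Q\rangle<\infty$; and $2z_n=\infty$ follows from the line-bundle identity over $S^2$ (two copies of a Bott projection dominate a trivial rank-one projection, whose class is $\langle\mu_{\infty,0}(g)\rangle=\infty$). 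None of these three ingredients --- the orthogonal Bott subprojections of $\varphi(p_1)$, the automorphism argument identifying $\langle\varphi(p_1)\rangle$ with the finite class $\langle Q\rangle$, and the doubling trick for Bott projections --- appears in your proposal, so the existence of your projections $r_n$ remains unestablished. To complete the proof you would need to supply them, at which point your argument becomes the paper's.
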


\begin{proof}
We will remind the reader of some key features of the construction retaining the notation from \cite{R03}. The algebra in question is a crossed product $C=D\rtimes_\alpha \mathbb{Z}$. We will then find the desired elements for the application of Proposition \ref{NoCFPCharact} right from its construction. 

At first, let $A:=C(\prod_{j=1}^\infty S^2,\mathcal{K})$. There is an injective map $\varphi$ from $A$ into its multiplier algebra $\mathcal{M}(A)$ with certain properties (see \cite[Proposition 5.2]{R03}), which extends to an injective map $\bar{\varphi}:\mathcal{M}(A)\rightarrow \mathcal{M}(A)$. This extension $\bar{\varphi}$ induces an inductive sequence with limit $B$ given by
$$\xymatrix{\mathcal{M}(A) \ar[r]^{\bar{\varphi}} \ar@/_1.5pc/[rrrr]_{\mu_{\infty,0}}  & \mathcal{M}(A) \ar[r]^{\bar{\varphi}}&\mathcal{M}(A) \ar[r]^{\bar{\varphi}} & \ldots \ar[r] & B}.$$
Let $\alpha$ denote the natural automorphism on $B$ coming from this inductive limit structure. Now, the algebra $D$ in the crossed product is given by the inductive limit of building blocks $D_n=\text{C*}(A_{-n},\ldots,A_{-1},A_0,A_1,\ldots, A_n)$ with injective connecting maps given by inclusion. Here, $A_0:=\mu_{\infty,0}(A)\cong C(\prod_{j=1}^\infty S^2,\mathcal{K})$, $A_n:=\alpha^n(\mu_{\infty,0}(A))$ for all $n\in\mathbb{Z}$. The properties of $\varphi$ imply that $A_n\cap A_m=\{0\}$ and $A_nA_m=A_{min\{n,m\}}$. 

The infinite projection $\mu_{\infty,0}(g)$ in $D\rtimes_\alpha\mathbb{Z}$ is given by the image of the trivial projection $g$ in $C(\prod_{j=1}^\infty S^2,\mathcal{K})\cong A_0=D_0$. (The map is given by the composition of the inclusion of $D_0$ into $D$ and the natural inclusion of $D$ into $D\rtimes_\alpha\mathbb{Z}$.) The finite projection is given by the image of the Bott projection, $Q:=\mu_{\infty,0}(p_1)\in D_0\hookrightarrow D\rtimes_\alpha\mathbb{Z}$, where $p_1$ denotes the Bott projection over the first coordinate of $\prod_{j=1}^\infty S^2$. 

We have that $\alpha(\mu_{\infty,0}(p_1))=\mu_{\infty,0}(\varphi(p_1))$. In $\mathcal{M}(A)$, we have that $\varphi(p_1)>q_n, n=1,2,\ldots$ for an infinite sequence of mutually orthogonal projections $q_n$ in $A$. Each $q_n$ is equivalent in $A$ to a Bott projection $p_{\nu(n)}$ with $\nu(n)\in \mathbb{N}$ denoting the coordinate of $\prod_{j=1}^\infty S^2$ over which the Bott projection is taken. (In the notation of \cite{R03} we have $\varphi(p_1)> \sum_{j=-\infty}^0 S_j p_{\nu(j,1)} S_j^*$, so $q_n:=S_{(-n)} p_{\nu(-n,1)} S_{(-n)}^*$.)
 
Setting $z_n:=\langle \mu_{\infty,0}(q_n) \rangle$ (where $\langle a \rangle$ denotes the Cuntz class of $a$), we have that 
$$\sum_{n=1}^\infty z_n= \sum_{n=1}^\infty \langle \mu_{\infty,0}(q_n) \rangle<\langle \mu_{\infty,0}(\varphi(p_1)) \rangle=\langle \alpha(\mu_{\infty,0}(p_1)) \rangle =\langle \mu_{\infty,0}(p_1) \rangle=\langle Q\rangle$$ is finite, and $2\cdot z_n=2\cdot \langle \mu_{\infty,0}(q_n)\rangle =\langle \mu_{\infty,0}( p_{\nu(-n,1)}\oplus p_{\nu(-n,1)})\rangle\geq \langle \mu_{\infty,0}(g)\rangle =\infty,\ n\in\mathbb{N}.$
\end{proof}

The next result provides the relation between the corona factorization property and property {\rm (QQ)} for simple C*-algebras. Proposition \ref{PropQQ} shows that property (QQ) implies the CFP. Example \ref{Ex:CFP-noW} and Example \ref{Ex:stablyNoFunctionals} show that the converse does not hold. However, if we rule out examples like the ones in \ref{Ex:CFP-noW} and \ref{Ex:stablyNoFunctionals} by assuming cancellation of small elements at infinity, then we do get the converse.

\begin{theorem}\label{CFPVsQQAlg}
Let $A$ be a simple $C^*$-algebra.  Then $\Cu(A)$ has property {\rm (QQ)} if and only if $\Cu(A)$ has both the CFP and cancellation of small elements at infinity.  
\end{theorem}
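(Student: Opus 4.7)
The forward direction is immediate from Propositions \ref{PropQQ} and \ref{QQImpliesCAI}. For the converse, I would assume $\Cu(A)$ satisfies CFP and cancellation of small elements at infinity, and suppose towards contradiction that (QQ) fails. Let $\mathcal{C}$ denote the set of counterexamples, namely elements $y \in \Cu(A)$ with $y \neq \infty$ admitting some positive integer $n$ with $ny = \infty$; for $y \in \mathcal{C}$, write $m(y)$ for the smallest such $n$. My first step is to observe that every $y \in \mathcal{C}$ satisfies $y \not\ll \infty$: otherwise $y + (m(y)-1)y = \infty$ combined with cancellation of small elements at infinity yields $(m(y)-1)y = \infty$, which forces $y = \infty$ when $m(y) = 2$ and contradicts the minimality of $m(y)$ when $m(y) \geq 3$. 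Pick any $x \in \mathcal{C}$ and write $m = m(x)$; the argument then splits according to whether $m = 2$ or $m \geq 3$.

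When $m \geq 3$, minimality of $m$ forces $(m-1)x < \infty$. Since $x \not\ll \infty$, $x$ is not compact, so I can choose a strictly rapidly increasing sequence $(x_k)$ with $x_k \ll x_{k+1} \ll \infty$ and supremum $x$. The same cancellation argument applied to each $x_k$ shows $mx_k < \infty$. Axiom (O5) applied to $x_k \ll x_{k+1} \leq x_{k+2}$ yields nonzero $z_k \ll \infty$ satisfying $x_k + z_k \leq x_{k+2} \leq x_{k+1} + z_k$. Iterating the first inequality separately along odd and even indices bounds each sub-sum by $x$, giving $\sum_{k=1}^\infty z_k \leq 2x$. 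Iterating the second inequality gives $x \leq x_{L+1} + \sum_{k \geq L} z_k$ for every $L$; multiplying by $m$ yields $mx_{L+1} + m\sum_{k \geq L} z_k = \infty$, and cancellation of small elements at infinity (since $mx_{L+1} \ll \infty$) forces $m\sum_{k \geq L} z_k = \infty$ uniformly in $L$. Proposition \ref{PropCFP}(iv) then produces $\sum z_k = \infty$, contradicting $\sum z_k \leq 2x \leq (m-1)x < \infty$.

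When $m = 2$, I would instead apply Lemma \ref{lm:halving} (in its natural extension to positive elements of $A \otimes \mathcal{K}$, which is simple and non-type-I whenever $A$ is; the case $A \cong \mathcal{K}$ being trivial) to some nonzero $x_1 \ll \infty$ chosen from a rapidly increasing sequence with supremum $x$, producing nonzero $y \leq x_1$ with $x_1 \propto y$. Then $y \ll \infty$, while $x \propto x_1 \propto y$ combined with $2x = \infty$ forces $(2MM')y = \infty$ for suitable positive integers $M, M'$. Since $y \leq x \neq \infty$ forces $y \neq \infty$, we have $y \in \mathcal{C}$, and the opening observation applied to $y$ yields $y \not\ll \infty$, the required contradiction.

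I expect the main obstacle to be the case $m \geq 3$: working in a $\Cu$-semigroup without subtraction, one must manipulate the sum $\sum z_k$ using only axiom (O5) and arrange a uniform multiple $m$ for all tail sums, which depends crucially on cancellation of small elements at infinity to eliminate the $mx_{L+1}$ term and on the precise reformulation of CFP given in Proposition \ref{PropCFP}(iv). The $m = 2$ case, by contrast, reduces cleanly via C*-algebraic weak halving to a smaller counterexample in $\Cu(A)_{\ll\infty}$, already ruled out by the opening observation.
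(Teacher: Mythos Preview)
Your treatment of the forward direction and of the case $m \geq 3$ is correct. For $m \geq 3$ you give a purely order-theoretic argument via axiom (O5), whereas the paper builds the sequence $(z_n)$ by functional calculus on a norm-one positive representative of $x$; your route is essentially the content of the omitted proof of Proposition~\ref{CFPVsQQPart2} and has the pleasant feature of working at the abstract $\Cu$-level.

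The $m = 2$ case, however, has a genuine gap: the assertion $x \propto x_1$ is not merely unjustified but is in fact refuted by your own opening observation. Indeed, if $x \leq M x_1$ held for some $M$, then $2Mx_1 \geq 2x = \infty$; since $x_1 \leq x \neq \infty$ forces $x_1 \neq \infty$, this would place $x_1$ in $\mathcal{C}$. But $x_1 \ll \infty$ by construction, contradicting the opening observation that no element of $\mathcal{C}$ is compactly contained in $\infty$. Thus no multiple of $x_1$ equals $\infty$, and hence no multiple of your proposed $y$ (for which you only know $x_1 \propto y$) can be shown to equal $\infty$; the argument collapses. Weak halving cannot rescue this: the entire difficulty at $m = 2$ is precisely that one cannot pass to a smaller counterexample lying in $\Cu(A)_{\ll\infty}$.

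The paper handles $m = 2$ by a different and more delicate route. Working with a norm-one representative $a \in (A\otimes\mathcal{K})_+$ of $x$, it first tries the bump-function sequence from the $m>2$ case; if $\sum_n z_n < \infty$ one is done since $2\sum_n z_n \geq 2x = \infty$. If instead $\sum_n z_n = \infty$, a refined construction (using cancellation of small elements at infinity at each inductive step) produces Cuntz classes $z_n$ of bump functions of $a$ with $z_{2n-1} \leq 2 z_{2n}$ and $\sum_n z_{2n} \leq x$, so that $3\sum_n z_{2n} \geq \sum_n z_n = \infty$ while $\sum_n z_{2n} \leq x \neq \infty$, witnessing the failure of the CFP.
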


\begin{proof}
Proposition \ref{QQImpliesCAI} and Proposition \ref{PropQQ} show that the 'only if'-direction holds. 

For the converse let us assume cancellation of small elements at infinity to hold. As in the proof of Proposition \ref{CFPVsQQ} we note that all we need to show is that, if there is some $x\in Cu(A)$ with $m\cdot x=\infty$ for some $m$, yet $x\neq\infty$, then there is a sequence $(z_n)_n$ such that $N\cdot \sum_{n=1}^\infty z_n=\infty$ for some $N$, yet $\sum_{n=1}^\infty z_n\neq \infty$. (In Proposition \ref{CFPVsQQ} we saw that this is easy with axiom (O5) in the algebraic case, i.e., in the case that every element in $Cu(A)$ can be written as the supremum of compact elements.)

The proof is divided in cases:

At first, suppose that we have $x\in \Cu(A)$ such that $m\cdot x=\infty$ for some $m>2$, but $(m-1)\cdot x\neq \infty$. Find $a\in (A\otimes \mathcal{K})_+$ of norm 1 with $\langle a \rangle=x$. For given $\alpha< \beta \in\mathbb{R}$, let $f_{\alpha,\beta}$ denote the function from $\mathbb{R}_+$ into itself given by
$$f_{\alpha, \beta}(t)=\left \{\begin{array}{ll} 0 & ,0\leq t\leq \alpha \mbox{ and }t  \geq \beta \\ 1 &, t=(\beta+\alpha)/2    \end{array} \right. \mbox{, and linear elsewhere.} $$
Note that $f_{\alpha,\beta}(a)\ll\infty$ for each $0<\alpha<\beta$. We set $a_n:=f_{1/2^{n},3/ 2^{n}}(a) $ and $z_n:=\langle a_n \rangle$, $n\geq 1$. Then $a_{2n}$ is orthogonal to $a_{2k}$, and $a_{2n+1}$ is orthogonal to $a_{2k+1}$, whenever $k\neq n$. It follows that $\sum_{n=1}^\infty z_{2n}\leq \langle a \rangle =x$, and also $\sum_{n=1}^\infty z_{2n-1}\leq x$, so $\sum_{n=1}^\infty z_{n}\leq 2x$. On the other hand, $\sum_{n=1}^N z_{n}\geq  \langle (a-1/2^N)_+ \rangle$ for each $N$, where $(a-\epsilon)_+=g(a)$ for $g(t)=\max\{0,t-\epsilon\}$. Hence,
$$ \sum_{n=1}^\infty z_{n}\leq 2x<\infty, \mbox{ and }m\cdot \sum_{n=1}^\infty z_{n}\geq m\cdot x=\infty.$$
We found our desired sequence $(z_n)_n$.

In the case that $m=2$, we may try to proceed as before to find the sequence $(z_n)_n$. In this case there exist two possibilities: If we are lucky, the $z_n$'s satisfy $\sum_{n=1}^\infty z_n <\infty$, in which case we are done, just as before. But possibly $\sum_{n=1}^\infty z_n =\infty$, in which case we need to restart to choose the $z_n$'s more carefully. Let us study this second case.

Suppose $2x=\infty$, $x\neq \infty$, and find $a\in (A\otimes \mathcal{K})_+$ of norm 1 with $\langle a \rangle=x$. Set $a_1:=(a-1/2)_+$, and $z_1:=\langle a_1 \rangle \in \Cu(A)_{\ll\infty}$. Let $y_2:=\langle f_{0,3/4}(a)\rangle$. Then $2y_2+2z_1\geq 2x=\infty$. By cancellation of small elements at infinity we must have that $2y_2=\infty$. We can write $y_2=\sup_n \langle f_{1/n,3/4}(a)\rangle $. Hence, $$z_1\ll \infty = 2\cdot y_2=2\cdot \sup_n \langle f_{1/n,3/4}(a)\rangle ,$$ so we can find $0< \delta_2< 1/2$ such that $z_1\leq 2\cdot \langle f_{\delta_2,3/4}(a)\rangle$. We set $a_2:=f_{\delta_2,3/4}(a)$ and $z_2:=\langle  a_2 \rangle$. 

Now find $\delta_2<\gamma_2<1/2$, set $a_3:=f_{\delta_2/2,\gamma_2}(a)$ and set $z_3:=\langle a_3 \rangle \in Cu(A)_{\ll\infty}$.

Similar to the previous step, we set $y_4:=\langle f_{0,3\delta_2/4}(a)\rangle$ and get 
$$z_3\ll \infty = 2\cdot y_4=2\cdot \sup_n \langle f_{1/n,3\delta_2/4}(a)\rangle.$$ Thus, proceeding inductively, we get a sequence $(a_n)_n$ of positive elements in $A\otimes \mathcal{K}$ and a sequence $(z_n)_n$ in $\Cu(A)$, such that:
\begin{itemize}
\item[(1)] $z_n=\langle a_n\rangle\ll\infty$ for all $n$.
\item[(2)] $a_n\leq a$ for all $n$.
\item[(3)] For all $n\neq k$, $a_{2n}$ is orthogonal to $a_{2k}$, and $a_{2n+1}$ is orthogonal to $a_{2k+1} $.
\item[(4)] $z_{2n-1}\leq 2 z_n$ for all $n$.
\item[(5)] $\sum_{n=1}^\infty z_{2n}\leq \langle a \rangle=x$ and $\sum_{n=1}^\infty z_{2n+1}\leq x$.
\item[(6)] $\sum_{n=1}^\infty z_{n}\geq \langle a \rangle=x$.
\end{itemize}

Recall that by assumption we have $\sum z_n=\infty$, and that by (4) it follows that $\sum_{n=1}^\infty z_{2n-1}\leq 2\cdot \sum_{n=1}^\infty z_{2n}$. Therefore,
$$3\cdot \sum_{n=1}^\infty z_{2n}\geq \sum_{n=1}^\infty z_{n}=\infty \mbox{ , while } \sum_{n=1}^\infty z_{2n}\leq x \neq \infty.$$ 
We found the desired sequence with $(z_{2n})_n$.
\end{proof}

We conclude this paper with an overview of our results on comparison properties for the Cuntz semigroup of a C*-algebra, together with a list of interesting open questions that naturally arise from our studies.

\begin{theorem}\label{thm:end}
Let $A$ be a simple C*-algebra. Then we have the following diagram of relations for comparison properties of the $\Cu$-semigroup $\Cu(A)$:
$$\xymatrix{
\omega-comparison \ar@{<=>}[d] \ar@{=>}[rd]& \\ \beta-comparison \ar@{<=>}[dd]&{\rm (QQ)} \ar@{<=>}[dd]  \\&  \\\ \lambda(y)=\infty \mbox{ for all functionals } \lambda\Leftrightarrow  y=\infty\ar@{<=>}[d]&\mbox{CFP  $\&$ cancel. small elements at }\infty\\ \nexists \mbox{ faithful state }f\in\mathcal{S}(\Cu(A),y)\Leftrightarrow  y=\infty  \ar@{=>}[ru] & }$$
\end{theorem}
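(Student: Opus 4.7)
The plan is to assemble the diagram of Theorem \ref{thm:end} by chaining together results from Sections \ref{SecCompProp} and \ref{SectionApplications}. The one C*-algebraic input, beyond the purely semigroup-theoretic results, is that $\Cu(A)\neq S_n$ for any $n\in\mathbb{N}$. This I would get from Lemma \ref{lm:halving} (weak halving for $W(A)$) together with Example \ref{elementaryNon}: since $S_n$ lacks weak halving, and since $\Cu(A)=S_n$ would force $W(A)=S_n$ as well, this case is ruled out. Every other piece of the diagram is then a direct citation of an earlier result.

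I would then treat the left-hand column. The equivalence of $\omega$-comparison and $\beta$-comparison follows from Proposition \ref{BetaThenOmega} (unconditional) together with Proposition \ref{BetaVsOmega}, whose hypotheses (simple, (O1)--(O6), not equal to any $S_n$) are satisfied by $\Cu(A)$ by the previous paragraph. The equivalence of $\beta$-comparison with the statement ``$\lambda(y)=\infty$ for all $\lambda\in F(\Cu(A))$ iff $y=\infty$'' is obtained by combining Lemma \ref{BetaLemma}, which reformulates $\beta$-comparison as ``$\beta(x,y)=0$ for some nonzero $x$ forces $y=\infty$'', with Lemma \ref{BetaAndFunctionals}, which identifies the three equivalent conditions on $y$: the existence of a nonzero $x$ with $\beta(x,y)=0$, the absence of a faithful state in $\mathcal{S}(\Cu(A),y)$, and $\lambda(y)=\infty$ for every functional. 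The bottom-left equivalence (functionals vs.\ faithful states) is again an immediate consequence of Lemma \ref{BetaAndFunctionals}.

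Finally I would dispatch the two diagonal arrows into the right column, along with the right-hand equivalence. For $\omega$-comparison $\Rightarrow$ (QQ), I would invoke Proposition \ref{PropQQ}(i) combined with the left-column equivalence $\omega\Leftrightarrow\beta$ already in hand. For the arrow from ``no faithful state at $y$ iff $y=\infty$'' to (QQ), a short direct argument suffices: if $n\cdot x=\infty$, then $n\cdot\lambda(x)=\infty$ for every functional $\lambda$, so $\lambda(x)=\infty$ for all $\lambda$; Lemma \ref{BetaAndFunctionals} then yields no faithful state at $x$, and the hypothesis forces $x=\infty$, which is precisely (QQ). The right-hand equivalence (QQ) $\Leftrightarrow$ CFP \& cancellation of small elements at infinity is Theorem \ref{CFPVsQQAlg}.

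The substantive obstacle has already been absorbed into Theorem \ref{CFPVsQQAlg}, whose delicate direction (CFP plus cancellation at infinity $\Rightarrow$ (QQ)) relies on a functional-calculus construction of a witnessing sequence $(z_n)$; once that is granted, the present theorem is a purely organizational summary. The reason the diagram collapses so cleanly in the C*-algebraic setting (in contrast with the general semigroup picture, where Examples \ref{Ex:CFP-noW}, \ref{Ex:stablyNoFunctionals} and \ref{Ex:Omega-noBeta} show strict implications) is that the weak halving property from Lemma \ref{lm:halving} eliminates the only obstruction, namely the semigroups $S_n$, to identifying $\omega$- and $\beta$-comparison.
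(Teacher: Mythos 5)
Your proposal is correct and assembles the diagram exactly as the paper intends: the paper gives no separate argument for Theorem \ref{thm:end}, leaving it as the combination of Lemma \ref{lm:halving} with Example \ref{elementaryNon} (to exclude $S_n$), Propositions \ref{BetaThenOmega}, \ref{BetaVsOmega} and \ref{PropQQ}, Lemmas \ref{BetaLemma} and \ref{BetaAndFunctionals}, and Theorem \ref{CFPVsQQAlg}, which is precisely the chain you cite. The only (immaterial) divergence is that the diagonal arrow out of the bottom-left node in the diagram points at the CFP-plus-cancellation node rather than at (QQ), but these are identified by Theorem \ref{CFPVsQQAlg}, so your direct argument suffices.
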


\begin{question}\label{QN}
\begin{itemize}
\item Is there any simple C*-algebra $A$ such that $\Cu(A)=[0,1]\cup\{\infty\}$?\newline Or any
 stably finite C*-algebra such that $\Cu(A)=\{(0,0)\} \cup ((0,1]\cup\{\infty\})\times(0, \infty]?$
\item Does $\Cu(A)$ have cancellation of small elements at infinity for any simple stably finite C*-algebra?
\item Is CFP plus cancellation of small elements at infinity equivalent to $\omega$-comparison for any $\Cu$-semigroup? What about for the $\Cu$-semigroup arising from a C*-algebra? 
\item Is CFP is equivalent to $\omega$-comparison for the Cuntz semigroup arising from a C*-algebra?
\end{itemize}
\end{question}

\end{document}